\theoremstyle{plain}
\newtheorem{algorithm}{Algorithm}[section]
\newtheorem{theorem}[algorithm] {Theorem}
\newtheorem{corollary}[algorithm]{Corollary}
\newtheorem{exercise}[algorithm]{Exercise}
\newtheorem{lemma}[algorithm]{Lemma}
\newtheorem{proposition}[algorithm]{Proposition}
\newtheorem{remark}[algorithm]{Remark}
\begin{document}
\title[Principles for Deforming Nonnegative Curvature]{Some Principles for
Deforming Nonnegative Curvature}
\dedicatory{Dedicated to Karsten Grove }
\author{Peter Petersen}
\author{Frederick Wilhelm}
\maketitle
\tableofcontents

\section{Introduction}

Let $\mathcal{N}$ be the class of closed simply connected, smooth $n$%
--manifolds admitting nonnegative sectional curvature and $\mathcal{P}%
\subset \mathcal{N}$ the corresponding class for positive curvature. Known
examples suggest that $\mathcal{N}$ ought to be much larger than $\mathcal{P}
$. On the other hand, there is no known obstruction that distinguishes
between the two classes. So it is actually possible that $\mathcal{N}=%
\mathcal{P}.$

In \cite{PetWilh2} we will give a deformation of the nonnegatively curved
metric on the Gromoll-Meyer sphere \cite{GromMey} to a positively curved
metric. The purpose of this note is to elucidate a few abstract principles
that will be used in this deformation, and possibly could be helpful for
other deformations to positive curvature.

Besides a few exceptions, \cite{Cheeg}, \cite{Dear2}, \cite{GrovVerdZil}, 
\cite{GrovZil1}, \cite{GrovZil2}, \cite{Guij} all known examples of compact
nonnegatively curved manifolds are constructed as Riemannian submersions of
compact Lie groups. A result in \cite{Tapp2} then implies that the zero
curvature planes of the nonexceptional examples are contained in totally
geodesic $2$--dimensional flats. As far as we are aware, the exceptional
examples also have totally geodesic flats, provided of course that they have
any zero curvature planes at all (\cite{Dear2} and \cite{GrovVerdZil}).

All known examples with nonnegative curvature, some zero curvatures, and
positive curvature at a point, are the images of Riemannian submersions of
compact Lie groups and hence have all zero planes contained in totally
geodesic flats (\cite{EschKer}, \cite{Esch}, \cite{GromMey}, \cite{Ker1}, 
\cite{Ker2}, \cite{PetWilh1}, \cite{Tapp1}, \cite{Wilh}, and \cite{Wilk}.)
So in most cases, any attempt to put positive curvature on a known
nonnegatively curved example must confront the issue of how to put positive
curvature on a neighborhood of a totally geodesic flat torus.

More than 20 years ago Strake observed that the presence of a totally
geodesic flat torus in a nonnegatively curved manifold means that there can
be no deformation that is positive to first order. In principle, a first
order deformation should be much easier to construct and verify than a
higher order one. In fact, if $\left\{ g_{t}\right\} _{t\in \mathbb{R}}$ is $%
C^{\infty }$ family of metrics with $g_{0}$ a metric of nonnegative
curvature, and if 
\begin{equation*}
\left. \frac{\partial }{\partial t}\mathrm{sec}_{g_{t}}\,P\right\vert
_{t=0}>0
\end{equation*}%
for all planes $P$ so that $\mathrm{sec}_{g_{0}}\,P=0,$ then $g_{t}$ has
positive curvature for all sufficiently small $t>0.$

On the other hand, if for all planes $P$ with $\mathrm{sec}_{g_{0}}\,P=0$ we
have 
\begin{eqnarray*}
\left. \frac{\partial }{\partial t}\mathrm{sec}_{g_{t}}\,P\right\vert _{t=0}
&=&0\text{ and} \\
\left. \frac{\partial ^{2}}{\partial t^{2}}\mathrm{sec}_{g_{t}}\,P\right%
\vert _{t=0} &>&0,
\end{eqnarray*}%
then, without more information, we can not make any conclusion about
obtaining positive curvature. For instance, if $\Phi _{t}$ is a flow that
moves the zero planes to positive curvature, then the variation $\left( \Phi
_{t}\right) ^{\ast }g$ can satisfy the conditions above, yet clearly each of
the metrics $\left( \Phi _{t}\right) ^{\ast }g$ are isometric to $g.$

The obvious problem with such a gauge transformation is that it only moves
zero planes to new places. Unfortunately, the discussion above illustrates
that any attempt to put positive curvature on a (generic) known
nonnegatively curved example must confront this issue. It is not enough to
consider the effect of a deformation on the set, $Z,$ of zero planes of the
original metric. Instead we to have check that the curvature becomes
positive in an entire neighborhood of $Z.$

To this bleak reality we offer the following ray of hope--

\begin{quotation}
The very rigidity of totally geodesic flats can be exploited in attempts to
deform them.
\end{quotation}

The rigidity of a totally geodesic flat within a fixed nonnegatively curved
manifold is of course well known and well understood. Here we have in mind a
different sort of rigidity. We will look at certain types of deformations
that preserve totally geodesic flats, and other types of deformations that
preserve aspects of the rigidity of totally geodesic flats. The tremendous
advantage of this rigidity is that it will allow us to change one component
of the curvature tensor while controlling the change in other components.
Since the problem of prescribing the curvature tensor is highly over
determined, in general, this is an entirely unreasonable thing to expect;
nevertheless, the rigidity of totally geodesic flats will allow us to do
this in certain narrowly constrained situations.

Besides Cheeger deformations, the metric changes that we use to go from the
Gromoll-Meyer metric to our positively curved metric are

\begin{itemize}
\item a deformation that we call the \emph{Orthogonal Partial Conformal
Change}

\item scaling of the fibers of the Riemannian submersion $Sp\left( 2\right)
\longrightarrow S^{4},$ to create integrally positive curvature, and

\item another deformation that we call the \emph{Tangential Partial
Conformal Change}
\end{itemize}

To describe a general \emph{Partial Conformal Change }we start with a
distribution $\mathcal{D}\subset TM,$ and decompose our original metric as 
\begin{equation*}
g=g_{\mathcal{D}}+g_{\mathcal{D}^{\perp }}.
\end{equation*}%
We then conformally change $g_{\mathcal{D}}$ while fixing $g_{\mathcal{D}%
^{\perp }}.$

Our use of the terms \textquotedblleft \emph{Orthogonal}\textquotedblright\
and \textquotedblleft \emph{Tangential}\textquotedblright \emph{\ }is meant
to convey that our changes will be relative to distributions that are either
orthogonal or contain the original zero curvature planes respectively.

An abstraction of the orthogonal partial conformal change is discussed in
Sections 2 and 3. It preserves nonnegative curvature, the zero curvature
locus, and has the effect of redistributing certain positive curvatures
along the initial zero curvature locus. Having a broader class of
nonnegatively curved metrics could certainly be an advantage. In fact, if we
were to perform our other deformations without doing the orthogonal partial
conformal change\emph{\ }we could make the old zero planes positively
curved, but as far as we can tell would not get positive curvature. The idea
that such a change is possible goes back at least to \cite{Wals}.

The fiber scaling is the central idea of the deformation to positive
curvature on the Gromoll--Meyer sphere, $\Sigma ^{7}.$ In section 4, we
prove an abstract theorem about fiber scaling. This result implies that if
we start with the metric from \cite{Wilh} and scale the fibers of $Sp\left(
2\right) \longrightarrow S^{4},$ then we get integrally positive curvature
over the sections that have zero curvature in \cite{Wilh}. More precisely,
the zero locus in \cite{Wilh} consists of a (large) family of totally
geodesic $2$--dimensional tori. We will show that after scaling the fibers
of $Sp\left( 2\right) \longrightarrow S^{4},$ the integral of the curvature
over any of these tori becomes positive. The computation is fairly abstract,
and the argument is made in these abstract terms, so no knowledge of the
metric of \cite{Wilh} is required.

In addition to proving that fiber scaling creates integrally positive
curvature, our argument in section 4 will provide a precise formula for what
happens to the curvature of each of the old zero curvature planes. The
leading order term has both signs, so the metric with the fibers scaled has
curvatures of both signs. On the other hand, the leading order term is also
the Hessian of a function and along any \emph{one }of our originally flat
tori it can be canceled by a conformal change of metric. The details are
carried out in subsection 4.1. Thus by reading section 4 the reader can get
a quick impression of what the entire deformation does to the curvature of a
single torus that is initially totally geodesic and flat.

Unfortunately, the conformal factor required to cancel the Hessian term from
fiber scaling varies from torus to torus. Our actual deformation includes a
partial conformal change for which the distribution $\mathcal{D}$ contains
the old zero curvature planes. This is our Tangential Partial Conformal\emph{%
\ }Change. In Section 5, we describe an abstract set up for our tangential
partial conformal deformation and show that the important curvatures change
as though we had performed an actual conformal change. Combining the results
of this section with our fiber scaling and conformal change calculations
provides a method to obtain positive curvature on the initially flat planes
of the Gromoll--Meyer sphere.

Section 6 is the first place in the paper where totally geodesic flats do
not play a prominent role. Instead we detail an observation that Cheeger
deformations can be used to create positive curvature even when the initial
metric has curvatures of both signs. Modulo the so called \textquotedblleft
Cheeger Reparametrization\textquotedblright\ of the Grassmannian, Cheeger
deformations preserve positive curvatures. In addition, any plane whose
projection to the orbits \textquotedblleft corresponds\textquotedblright\ to
a positively curved plane will become positively curved provided the
deformation is carried out for a sufficiently long period.

We do not imagine that we are the first to make this observation, and in
fact, took for granted that this idea was well understood when we wrote the
first draft of \cite{PetWilh2}. We have subsequently become aware that these
ideas are not as well known as we originally assumed, so we have included
them for the sake of completeness.

The curvatures of the zero planes of \cite{Wilh} are not affected by Cheeger
deformations, but most nearby planes feel the effect. Part of the role of
long term Cheeger deformations is to simplify the problem of estimating the
curvatures in a neighborhood of the original zero curvature locus.

Sections 7 and 8 are also part of our strategy to solve this problem, and
are the sections that are most dependent on the others. While this paper is
an attempt to divide some of our deformation of the Gromoll--Meyer sphere
into digestible, abstract pieces, the reader should be aware that in at
least one respect the argument is an intertwined whole.

In Section 7, we analyze the effect of certain Cheeger deformations on our
formula for the curvatures of our tori after fiber scaling. We will show
that Cheeger deformations have the effect of compressing the bulk of these
curvatures into a small set, $T_{0}.$ Because $T_{0}$ is small the
orthogonal partial conformal change will allow us to make certain curvatures
much larger on $T_{0},$ and \textquotedblleft pay\textquotedblright\ with
only a small decrease in curvature outside of $T_{0}.$ This synergy makes
the problem of verifying positive curvature more tractable, is crucial to
our whole argument, and explained in greater detail in Section 8.

It is natural to speculate on the extent to which some (or all) of these
ideas might be useful in other deformations to positive curvature. For
example there are non simply connected examples with nonnegative curvature
that according to Synge's Theorem can not admit positive curvature, so it is
natural to ask where our methods break down in these examples. While we have
not made an exhaustive study of this question, we can point out that if a
totally geodesic flat is \emph{vertizontal} for the submersion whose fibers
are scaled, then our curvature formula shows that it will continue to be
flat. This is the case for the metrics on $\mathbb{R}P^{3}\times \mathbb{R}%
P^{2}$ and $S^{3}\times S^{2}$ in \cite{Wilk}, with respect to the isometric 
$SO\left( 3\right) $--action of that paper. Since our total argument in \cite%
{PetWilh2} is very long, there are many obstructions to using it in general.
It seems more likely that individual pieces will find other applications.%
\vspace{0.2in}

\noindent \textbf{Acknowledgement: }\emph{We are grateful to Owen Dearricott
and Burkhard Wilking for extensive conversations and suggestions about this
work, to Igor Belegradek and Burkhard Wilking for a correction to the
statement of Theorem \ref{C^2 close}, and to Igor Belegradek for other
suggestions. }

\section{Deformations Preserving Totally Geodesic Families}

In the next two sections we describe an abstract framework for our
orthogonal partial conformal change. Our exposition will be by
\textquotedblleft bootstrapping\textquotedblright , starting with some more
general metric changes.

The problem of prescribing the curvature tensor of a Riemannian $n$%
--manifold with $n\geq 4$ is highly overdetermined. In particular, it is
unreasonable to expect to change one component of the curvature tensor while
holding other components fixed. We should also not expect to change one
component of the curvature tensor while keeping the change in other
components small compared to the change in the desired component.

In the next two sections we explore exceptions to this principle that can be
traced to the rigidity of totally geodesic flat tori in nonnegatively curved
manifolds.

We begin by recalling,

\begin{exercise}
(5.4 in \cite{Pet}) Let $\gamma $ be a geodesic in $\left( M,g\right) .$ Let 
$\tilde{g}$ be another metric on $M$ which satisfies%
\begin{equation*}
g\left( \dot{\gamma},\cdot \right) =\tilde{g}\left( \dot{\gamma},\cdot
\right) :TM\longrightarrow \mathbb{R}.
\end{equation*}%
Then $\gamma $ is also a geodesic with respect to $\tilde{g}.$
\end{exercise}

A straightforward generalization is

\begin{proposition}
Let $\mathcal{S}$ be a family of totally geodesic submanifolds of $\left(
M,g\right) .$ Let $\tilde{g}$ be another metric on $M$ which satisfies%
\begin{equation*}
g\left( X,\cdot \right) =\tilde{g}\left( X,\cdot \right) :TM\longrightarrow 
\mathbb{R}
\end{equation*}%
for all vectors $X$ tangent to a totally geodesic submanifold in $\mathcal{S}%
,$ then $\mathcal{S}$ is also a family of totally geodesic submanifolds of $%
\left( M,\tilde{g}\right) .$
\end{proposition}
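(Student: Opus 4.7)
The plan is to reduce to the exercise one submanifold and one geodesic at a time. Fix an arbitrary $N\in \mathcal{S}$; it suffices to show $N$ is totally geodesic in $(M,\tilde{g})$. The hypothesis $g(X,\cdot)=\tilde{g}(X,\cdot)$ for every $X\in TN$ is stronger than it first looks: it pins down both (a) the induced metric, since for $X,Y\in TN$ we have $\tilde{g}(X,Y)=g(X,Y)$, and (b) precisely the one-form identity needed to invoke the exercise on any curve whose velocity lies in $TN$.

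Now pick $p\in N$ and $v\in T_{p}N$, and let $\gamma$ be the geodesic of the induced metric $g|_{N}$ with $\gamma(0)=p$ and $\dot{\gamma}(0)=v$. Because $N$ is totally geodesic in $(M,g)$, $\gamma$ is also a geodesic of $(M,g)$, and, crucially, $\dot{\gamma}(t)\in T_{\gamma(t)}N$ for all $t$ in the domain. Consequently, the hypothesis applies pointwise along $\gamma$, yielding the identity $g(\dot{\gamma}(t),\cdot )=\tilde{g}(\dot{\gamma}(t),\cdot )$ as linear functionals on $T_{\gamma(t)}M$ at every $t$.

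With this in hand, the exercise applied to $\gamma$ inside $(M,g)$ immediately promotes $\gamma$ to a geodesic of $(M,\tilde{g})$. Since $\tilde{g}|_{N}=g|_{N}$ by observation (a), this $\gamma$ is also the $\tilde{g}|_{N}$-geodesic through $p$ with initial velocity $v$. Thus every geodesic of the induced metric $\tilde{g}|_{N}$ starting tangent to $N$ is a $\tilde{g}$-geodesic of $M$ that never leaves $N$, which is the definition of $N$ being totally geodesic in $(M,\tilde{g})$. Running this argument for each $N\in\mathcal{S}$ gives the proposition.

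No step here is difficult, but the subtle point worth flagging is why the exercise can be applied along the whole of $\gamma$ rather than only at the initial time: it is the hypothesis that $N$ is totally geodesic in $(M,g)$ that keeps $\dot{\gamma}(t)\in T_{\gamma(t)}N$, which in turn keeps the one-form hypothesis active at every time. If one only assumed $N$ were an immersed submanifold with $\dot{\gamma}(0)\in T_pN$, the argument would stall after $t=0$, so this is the one place the rigidity of the totally geodesic family is genuinely used.
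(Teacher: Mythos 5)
Your proof is correct and follows essentially the same route as the paper: take a geodesic of the induced metric on a member of $\mathcal{S}$, use total geodesy in $(M,g)$ to view it as a $g$-geodesic of $M$ whose velocity stays tangent to the submanifold, and invoke the exercise to conclude it is a $\tilde{g}$-geodesic, noting the induced metric is unchanged. The paper states this in one line; your write-up merely makes the same bookkeeping (velocity remaining tangent, agreement of induced metrics) explicit.
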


\begin{proof}
If $\gamma $ is any geodesic in $S\in \mathcal{S}$ with respect to $g,$ then
by the preceding exercise, $\gamma $ is a geodesic of $\left( M,\tilde{g}%
\right) .$
\end{proof}

\begin{corollary}
If the totally geodesic family $\mathcal{S}$ of the preceding proposition
consists of totally geodesic flat submanifolds for $\left( M,g\right) ,$
then it also consists of totally geodesic flat submanifolds for $\left( M,%
\tilde{g}\right) .$
\end{corollary}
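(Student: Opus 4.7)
The plan is to observe that the hypothesis of the preceding proposition, when restricted to pairs of vectors both tangent to some $S \in \mathcal{S}$, forces the induced metrics on $S$ to coincide. Indeed, if $X, Y$ are both tangent to $S$, then applying the identity $g(X, \cdot) = \tilde{g}(X, \cdot)$ to $Y$ gives $g(X, Y) = \tilde{g}(X, Y)$. Thus $(S, g|_S)$ and $(S, \tilde{g}|_S)$ are the \emph{same} Riemannian manifold; in particular they have identical intrinsic curvature tensors.

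Next I would invoke the preceding proposition to conclude that each $S \in \mathcal{S}$ is totally geodesic in $(M, \tilde{g})$. By the Gauss equation, the ambient sectional curvature of a $2$-plane tangent to a totally geodesic submanifold agrees with the intrinsic sectional curvature of that submanifold. Consequently, for any $2$-plane $P$ tangent to some $S \in \mathcal{S}$,
\begin{equation*}
\mathrm{sec}_{\tilde{g}}\, P \;=\; \mathrm{sec}_{\tilde{g}|_S}\, P \;=\; \mathrm{sec}_{g|_S}\, P \;=\; \mathrm{sec}_{g}\, P \;=\; 0,
\end{equation*}
where the middle equality uses that the induced metrics agree and the outer equalities use totally geodesicity in the two ambient metrics.

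There is essentially no obstacle here beyond recognizing that the one-sided condition $g(X,\cdot)=\tilde g(X,\cdot)$ automatically becomes a two-sided equality of induced metrics when both arguments are tangent to $S$; the rest is the standard Gauss-equation reduction for totally geodesic submanifolds. Hence the entire proof is a one-line combination of the proposition with the observation $g|_S = \tilde{g}|_S$.
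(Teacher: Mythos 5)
Your proposal is correct and takes essentially the same route as the paper: the paper's proof consists of the single observation that the intrinsic metric on members of $\mathcal{S}$ does not change (your remark that $g|_S=\tilde{g}|_S$ since both arguments are tangent to $S$), combined with the preceding proposition for total geodesy. Your explicit Gauss-equation step merely spells out what the paper leaves implicit in ``totally geodesic flats are preserved.''
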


\begin{proof}
The intrinsic metric on members of $\mathcal{S}$ does not change. In
particular, totally geodesic flats are preserved.
\end{proof}

Throughout the paper we set 
\begin{equation*}
\mathrm{curv}\left( X,W\right) \equiv R\left( X,W,W,X\right) .
\end{equation*}%
If 
\begin{equation*}
\mathrm{span}\left\{ X,W\right\}
\end{equation*}%
is a $0$--curvature plane of $g,$ then a nearby plane has the form 
\begin{equation*}
\Pi _{X,W,Z,V}^{\sigma ,\tau }\equiv \mathrm{span}\left\{ X+\sigma Z,W+\tau
V\right\}
\end{equation*}%
for some tangent vectors $Z$ and $V$ and some real numbers $\sigma ,\tau .$
For each choice of $\left\{ X,W,Z,V\right\} $ we then get a quartic
polynomial%
\begin{equation*}
P\left( \sigma ,\tau \right) =\mathrm{curv}\left( X+\sigma Z,W+\tau V\right)
,
\end{equation*}%
in $\sigma $ and $\tau .$ A neighborhood of the zero planes (at the zero
curvature points) can be described as 
\begin{equation*}
\left\{ \left. \Pi _{X,W,Z,V}^{\sigma ,\tau }\right\vert \mathrm{curv}\left(
X,W\right) =0,\text{ }\left( \sigma ,\tau \right) \in \left[ 0,\varepsilon %
\right] \times \left[ 0,\varepsilon \right] \right\} .
\end{equation*}

Assuming that $M$ is compact and $\varepsilon $ is sufficiently small, we
can arrange this representation so that all of the polynomials $P\left(
\sigma ,\tau \right) $ are positive on $\left[ 0,\varepsilon \right] \times %
\left[ 0,\varepsilon \right] ,$except at $\left( \sigma ,\tau \right)
=\left( 0,0\right) .$

It is much easier to deform the metric within nonnegative curvature if, in
addition, the total quadratic term of $P\left( \sigma ,\tau \right) $
satisfies the following nondegeneracy condition

\begin{equation*}
\sigma ^{2}\mathrm{curv}\left( Z,W\right) +2\sigma \tau \left( R\left(
X,W,V,Z\right) +R\left( X,V,W,Z\right) \right) +\tau ^{2}\mathrm{curv}\left(
X,V\right) >0\text{ for all }\left( \sigma ,\tau \right) \in S^{1}.
\end{equation*}%
We call this the \emph{Quadratic Nondegeneracy Condition.}

\begin{theorem}
\label{C^2 close} Suppose that $\left( M,g\right) $ is compact and
nonnegatively curved and all of its zero planes are contained in a family $%
\mathcal{S}$ of totally geodesic flat submanifolds, and satisfy the
quadratic nondegeneracy condition.

Let $\tilde{g}$ be obtained from $g$ as in the preceding proposition.

Then $\left( M,\tilde{g}\right) $ is nonnegatively curved along the union of
the family $\mathcal{S}$ with precisely the same $0$ curvature planes as $g,$
provided $\tilde{g}$ is sufficiently close to $g$ in the $C^{2}$--topology.
\end{theorem}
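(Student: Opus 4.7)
\emph{Plan for the proof.} I would split the unit Grassmannian bundle over $\bigcup\mathcal{S}$ into a neighborhood of the planes tangent to $\mathcal{S}$ and its complement, and argue nonnegativity of $\mathrm{curv}_{\tilde{g}}$ on each piece separately. By the preceding corollary every $S\in\mathcal{S}$ is still totally geodesic and flat for $\tilde{g}$, so every plane tangent to $\mathcal{S}$ remains a $0$--plane of $\tilde{g}$; the content of the theorem is that these are the \emph{only} $0$--planes over $\bigcup\mathcal{S}$, and that $\mathrm{curv}_{\tilde{g}}\geq 0$ on $\bigcup\mathcal{S}$.

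For planes bounded away from those tangent to $\mathcal{S}$, compactness of $M$ and of the relevant Grassmann bundle gives a uniform lower bound $\mathrm{curv}_{g}\geq c>0$. Since the Riemann tensor depends continuously on the metric in the $C^{2}$--topology, choosing $\tilde{g}$ sufficiently $C^{2}$--close to $g$ forces $\mathrm{curv}_{\tilde{g}}\geq c/2$ on the same set.

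For planes near a $0$--plane of $g$, I would parameterize as $\Pi_{X,W,Z,V}^{\sigma,\tau}$ with $\{X,W\}$ an orthonormal frame of a $2$--plane tangent to some $S\in\mathcal{S}$, and study $P_{\tilde{g}}(\sigma,\tau)=\mathrm{curv}_{\tilde{g}}(X+\sigma Z,W+\tau V)$. The constant term vanishes because $\mathrm{span}\{X,W\}\subset TS$ and $S$ is flat for $\tilde{g}$. The crucial step is to observe that the linear coefficients $2\tilde{R}(X,W,W,Z)$ and $2\tilde{R}(X,V,W,X)$ also vanish identically: decomposing $Z$ and $V$ into components tangent and normal to $S$, the tangential contributions vanish by the Gauss equation together with flatness of $S$ in $\tilde{g}$, while the normal contributions vanish by the Codazzi equation together with total geodesy of $S$ in $\tilde{g}$ (any three--tangent, one--normal component of $\tilde{R}$ at $S$ is zero). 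Hence $P_{\tilde{g}}$ reduces to its quadratic form plus cubic and quartic remainders.

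By $C^{2}$--continuity of the Riemann tensor, the coefficients of the quadratic form of $P_{\tilde{g}}$ are close to those of $P_{g}$, and the hypothesized quadratic nondegeneracy condition combined with compactness yields a uniform lower bound $(c_{1}/2)(\sigma^{2}+\tau^{2})$ on this quadratic form for $\tilde{g}$ close to $g$ in $C^{2}$. The remainder is controlled by $K(\sigma^{2}+\tau^{2})^{3/2}$, so $P_{\tilde{g}}(\sigma,\tau)>0$ on a fixed punctured square $[0,\varepsilon]\times[0,\varepsilon]\setminus\{(0,0)\}$. The main obstacle I anticipate is bookkeeping: one must choose $\varepsilon$ and the $C^{2}$--threshold on $\tilde{g}-g$ uniformly over all $(X,W,Z,V)$ and all base points in $\bigcup\mathcal{S}$, so that the near and far regimes splice together into a cover of the entire Grassmann bundle over $\bigcup\mathcal{S}$.
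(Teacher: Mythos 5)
Your proposal is correct and follows essentially the same route as the paper: parameterize nearby planes as $\Pi_{X,W,Z,V}^{\sigma,\tau}$, kill the constant and linear terms of the curvature polynomial using the fact that the members of $\mathcal{S}$ remain totally geodesic and flat for $\tilde{g}$ (your Gauss--Codazzi decomposition just makes explicit the paper's assertion that $R^{\tilde{g}}(X,W)W=R^{\tilde{g}}(W,X)X=0$), and then use the quadratic nondegeneracy condition together with $C^{2}$--closeness of the curvature coefficients, with compactness handling the planes bounded away from the zero locus. The extra bookkeeping you flag (uniform choice of $\varepsilon$ and the $C^{2}$--threshold) is exactly what the paper leaves implicit, so there is no substantive difference in approach.
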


\begin{remark}
This result can be viewed as an abstraction of Theorem 2.1 in \cite{Wals}.
\end{remark}

\begin{proof}
Since $g$ and $\tilde{g}$ are $C^{2}$--close any $0$--curvature planes of $%
\tilde{g}$ must be close in the Grassmannian to $0$--curvature planes of $g.$

We must show that 
\begin{equation*}
\tilde{P}\left( \sigma ,\tau \right) =\mathrm{curv}^{\tilde{g}}\left(
X+\sigma Z,W+\tau V\right)
\end{equation*}%
is also nonnegative on $\left[ 0,\varepsilon \right] \times \left[
0,\varepsilon \right] $ and that it only vanishes when $\sigma =\tau =0.$

Because $X$ and $W$ are tangent to a $0$--curvature plane in a nonnegatively
curved manifold%
\begin{equation*}
R^{g}\left( X,W\right) W=R^{g}\left( W,X\right) X=0.
\end{equation*}%
Since they are also tangent to a totally geodesic flat that is preserved
under our deformation we have 
\begin{equation*}
R^{\tilde{g}}\left( X,W\right) W=R^{\tilde{g}}\left( W,X\right) X=0.
\end{equation*}%
So the constant and linear terms of $P\left( \sigma ,\tau \right) $ and $%
\tilde{P}\left( \sigma ,\tau \right) $ vanish.

Combining the quadratic nondegeneracy condition with the fact that 
\begin{equation*}
P\left( \sigma ,\tau \right) \equiv \mathrm{curv}^{g}\left( X+\sigma
Z,W+\tau V\right) \geq 0,
\end{equation*}%
and only vanishes within $\left[ 0,\varepsilon \right] \times \left[
0,\varepsilon \right] $ when $\sigma =\tau =0,$ we conclude that $\tilde{P}%
\left( \sigma ,\tau \right) $ is nonnegative and only vanishes within $\left[
0,\varepsilon \right] \times \left[ 0,\varepsilon \right] $ when $\sigma
=\tau =0,$ provided the coefficients of $P$ and $\tilde{P}$ are sufficiently
close.

Thus $\left( M,\tilde{g}\right) $ is nonnegatively curved on the union of
the members of $\mathcal{S}$ if $\tilde{g}$ is sufficiently close to $g$ in
the $C^{2}$--topology.
\end{proof}

A problem with this theorem is that it does not tell us about the curvature
of points in $\left( M,\tilde{g}\right) $ that are not at a $0$--curvature
point of $\left( M,g\right) $. Of course there are various reasons why we
might or might not know about these curvatures. In \cite{PetWilh2}, we will
apply the following idea.

\begin{corollary}
\label{S with a submersion}Let $\left( M,g\right) $ be nonnegatively curved.
Suppose $\pi :\left( M,g\right) \longrightarrow \Sigma $ is a Riemannian
submersion. Suppose also that the lifts of $0$--planes of $\Sigma $ are
tangent to a family $\mathcal{S}$ of totally geodesic flat submanifolds of $%
M,$ and that the image%
\begin{equation*}
\cup _{S\in \mathcal{S}}\pi \left( S\right)
\end{equation*}%
contains a neighborhood $U$ of all of the points of $\Sigma $ that have $0$%
--curvatures. Suppose that the quadratic nondegeneracy condition is
satisfied on horizontal planes.

Let $\tilde{g}$ be $C^{2}$--close to $g$ and satisfy 
\begin{equation*}
g\left( X,\cdot \right) =\tilde{g}\left( X,\cdot \right)
\end{equation*}%
for all vectors tangent to a totally geodesic submanifold in $\mathcal{S}.$

If $\pi :\left( M,\tilde{g}\right) \longrightarrow \Sigma $ is a Riemannian
submersion, then the metric induced on $\Sigma $ is nonnegatively curved
with precisely the same $0$ curvature planes as $g.$
\end{corollary}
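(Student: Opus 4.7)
The plan is to reduce this corollary to Theorem \ref{C^2 close} applied to the induced metric $\bar g$ on $\Sigma$ with the family $\{\pi(S) : S \in \mathcal{S}\}$, using O'Neill's horizontal curvature formula to transfer all the hypotheses from $M$ to $\Sigma$. Outside the neighborhood $U$, $\bar g$ already has strictly positive curvature on a compact set, so $C^2$-closeness of the two induced metrics will take care of that region automatically.

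The first step is to see that each $\pi(S)$ is a totally geodesic flat submanifold of $(\Sigma, \bar g)$ and that the metric induced from $\tilde g$ agrees with $\bar g$ on directions tangent to these submanifolds. The hypothesis that horizontal lifts of $0$-planes of $\Sigma$ are tangent to members of $\mathcal{S}$ makes each $S$ a horizontal submanifold for $g$, so $\pi|_S$ is a local isometry onto $\pi(S)$. The hypothesis $g(X, \cdot) = \tilde g(X, \cdot)$ for $X \in TS$, combined with the $g$-horizontality of $TS$, then forces $TS$ to be $\tilde g$-horizontal as well, so $\pi|_S$ is a local isometry for the $\tilde g$-induced metric too; writing the $\tilde g$-horizontal lift of any $y \in T_p \Sigma$ as the $g$-horizontal lift plus a vertical correction, the equality of the two induced metrics on $T\pi(S)$ follows from the hypothesis on $\tilde g$ and the fact that both $X$ and the vertical correction pair trivially.

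The second step is to transport the quadratic nondegeneracy from horizontal planes in $M$ to all planes in $\Sigma$. For a $0$-plane $\mathrm{span}\{x, w\} \subset T_p \Sigma$ of $\bar g$ with horizontal lift $\mathrm{span}\{X, W\}$, the vanishing of $\mathrm{curv}^{\bar g}(x, w)$ together with $\mathrm{sec}^g \geq 0$ and O'Neill's formula forces both $\mathrm{curv}^g(X, W) = 0$ and $[X, W]^V = 0$. For the corresponding polynomial $\bar P(\sigma, \tau) = \mathrm{curv}^{\bar g}(x + \sigma z, w + \tau v)$ this yields
\begin{equation*}
\bar P(\sigma, \tau) = P(\sigma, \tau) + \tfrac{3}{4}\bigl|\sigma[Z, W]^V + \tau[X, V]^V + \sigma\tau[Z, V]^V\bigr|^2,
\end{equation*}
so the degree-two part of $\bar P$ dominates that of $P$ by the nonnegative form $\tfrac{3}{4}|\sigma[Z, W]^V + \tau[X, V]^V|^2$, preserving the positive-definiteness hypothesized on horizontal planes.

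With all the hypotheses now verified on $\Sigma$, Theorem \ref{C^2 close} applied to $(\Sigma, \bar g)$ with the family $\{\pi(S)\}$ delivers nonnegative curvature of the $\tilde g$-induced metric at every point of $\cup_S \pi(S) \supseteq U$ with exactly the same $0$-planes as $\bar g$; the complement of $U$ is strictly positively curved for $\bar g$ on a compact set, and $C^2$-closeness handles it. The only nontrivial obstacle is the quadratic nondegeneracy transfer: it rests on $[X, W]^V = 0$ at the $0$-plane, which is the precise piece of information that makes O'Neill's A-tensor correction compatible with the constant- and linear-term vanishing used in the proof of Theorem \ref{C^2 close}.
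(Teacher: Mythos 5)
Your overall route---descending everything to $\Sigma$, checking the hypotheses of Theorem \ref{C^2 close} for the two induced metrics with the family $\left\{ \pi \left( S\right) \right\} $, transferring the quadratic nondegeneracy via O'Neill's formula, and disposing of $\Sigma \setminus U$ by compactness and $C^{2}$--closeness---is the argument this corollary is evidently meant to have (the paper states it without proof, and the hypothesis that $\cup _{S}\pi \left( S\right) $ covers a neighborhood $U$ of the zero locus is used exactly as you use it). The pieces you supply are largely correct: $\mathrm{curv}^{\bar{g}}\left( x,w\right) =0$ together with $\mathrm{sec}^{g}\geq 0$ does force $\mathrm{curv}^{g}\left( X,W\right) =0$ and $\left[ X,W\right] ^{\mathcal{V}}=0$; the quadratic part of $\bar{P}$ is that of $P$ plus the nonnegative form $\frac{3}{4}\left\vert \sigma \left[ Z,W\right] ^{\mathcal{V}}+\tau \left[ X,V\right] ^{\mathcal{V}}\right\vert ^{2}$, so the nondegeneracy hypothesized on horizontal planes descends; and once the members of $\mathcal{S}$ are horizontal, $g\left( X,\cdot \right) =\tilde{g}\left( X,\cdot \right) $ makes $TS$ simultaneously $g$-- and $\tilde{g}$--horizontal, which is what gives the agreement of the two induced metrics on $T\pi \left( S\right) $.

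The genuine gap is your very first deduction: ``the hypothesis that horizontal lifts of $0$--planes of $\Sigma $ are tangent to members of $\mathcal{S}$ makes each $S$ a horizontal submanifold for $g$.'' That is not an implication. The hypothesis only says each lifted $0$--plane is contained in $T_{p}S$ for some $S$; it does not say that every tangent space of every $S$ consists of such lifts, so $T_{q}S$ may well contain directions that are not $g$--horizontal (for instance a flat of dimension larger than two containing a vertical direction, or a $2$--flat whose tangent plane coincides with a lifted $0$--plane only at some points). Everything in your first step rests on horizontality: without it $\pi \left( S\right) $ need not be a submanifold, $\pi |_{S}$ need not be a local isometry, $\pi \left( S\right) $ need not be totally geodesic or flat in $\Sigma $, and the argument that vectors tangent to $S$ are also $\tilde{g}$--horizontal breaks down, so Theorem \ref{C^2 close} cannot be invoked downstairs. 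What you can actually extract from the stated hypotheses at a point over the zero locus is only $A_{X}W=0$ for the lifted $0$--plane, which by itself does not propagate to horizontality of all of $S$. So you must either read the hypothesis in the stronger (clearly intended) sense that the members of $\mathcal{S}$ are horizontal---e.g.\ that they are horizontal lifts of the flats of $\Sigma $ through its $0$--planes, as in the application to $Sp\left( 2\right) \longrightarrow \Sigma ^{7}$---and say so explicitly, or prove horizontality from additional structure. Two smaller debts in the same step: you should record why the induced metrics on $\Sigma $ are $C^{2}$--close (the $\tilde{g}$--horizontal distribution is $C^{2}$--close to the $g$--horizontal one), and why $\pi \left( S\right) $ is totally geodesic (a geodesic of $\Sigma $ starting tangent to $\pi \left( S\right) $ has its horizontal lift equal to the $M$--geodesic with initial velocity in $TS$, which stays in the totally geodesic $S$).
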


\section{Orthogonal Partial Conformal Change}

With a few qualifications, the Orthogonal Partial Conformal Change in \cite%
{PetWilh2} fits into the basic set up of the preceding corollary for the
submersion $Sp\left( 2\right) \longrightarrow \Sigma ^{7}$. The main
deficiency is that the deformation of \cite{PetWilh2} is only $C^{1}$--small.

Although the preceding corollary is false for arbitrary $C^{1}$--small
deformations, there is a narrowly constrained situation where it holds.

The main tool is proven using Cartan formalism (\cite{Spiv}, Chap 7).

\begin{lemma}
\label{C^1 principle}Suppose that $\left\{ E_{i}\right\} $ is an orthonormal
frame for $g$ with dual coframe $\left\{ \theta ^{i}\right\} .$ Suppose that 
$\tilde{\theta}^{i}=\phi ^{i}\theta ^{i}$ is an orthonormal coframe for $%
\tilde{g},$ where $\phi ^{i}$ are smooth functions on $M.$ Assume that%
\begin{equation*}
d\phi ^{i}=\psi ^{i}\theta ^{1}
\end{equation*}%
and that 
\begin{equation*}
d\psi ^{i}=\lambda ^{i}\theta ^{1}
\end{equation*}%
for some other smooth functions $\psi ^{i}$ and $\lambda ^{i}.$ If the
functions $\phi ^{i}$ are close to $1$ in the $C^{1}$--topology, then the
only components of $R^{\tilde{g}}\left( \tilde{E}_{i},\tilde{E}_{j},\tilde{E}%
_{k},\tilde{E}_{l}\right) $ that are not close to $R^{g}$ are the terms that
up to symmetries of the curvature tensor can be reduced to $R^{\tilde{g}%
}\left( \tilde{E}_{1},\tilde{E}_{i},\tilde{E}_{i},\tilde{E}_{1}\right) .$
\end{lemma}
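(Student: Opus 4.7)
The plan is to use Cartan's structural equations: first to express the connection $1$-forms $\tilde\omega^i_j$ in terms of $\omega^i_j$, $\phi^i$, and $\psi^i$, then to pass through the second structural equation to the curvature forms $\tilde\Omega^i_j$. Differentiating $\tilde\theta^i = \phi^i \theta^i$ yields
\[
d\tilde\theta^i = \psi^i\,\theta^1 \wedge \theta^i - \phi^i \sum_j \omega^i_j \wedge \theta^j,
\]
which, set equal to $-\sum_j \tilde\omega^i_j \wedge \tilde\theta^j$, determines $\tilde\omega^i_j$ uniquely via Cartan's lemma together with the antisymmetry $\tilde\omega^i_j = -\tilde\omega^j_i$. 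I would decompose $\tilde\omega^i_j = \omega^i_j + \Phi^i_j + \Psi^i_j$, where $\Phi^i_j$ collects the contributions proportional to $\phi^i - \phi^j$ and $\Psi^i_j$ those proportional to $\psi^i$. The structural fact I expect to extract is that $\Psi^i_j$ vanishes unless $1 \in \{i,j\}$; explicitly, $\Psi^i_1 = \tfrac{\psi^i}{\phi^1 \phi^i}\,\tilde\theta^i$ for $i\neq 1$ (with $\Psi^1_i = -\Psi^i_1$) and $\Psi^i_j = 0$ whenever $i,j\neq 1$. This is the heart of the argument, and it is what ultimately forces the damaged curvature components to involve the index $1$.

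The $C^1$-closeness hypothesis makes $|\phi^i - 1|$ and $|\psi^i|$ small, so both $\Phi^i_j$ and $\Psi^i_j$ are $C^0$-small and $\tilde\omega^i_j$ is $C^0$-close to $\omega^i_j$. Consequently, in the second structural equation $\tilde\Omega^i_j = d\tilde\omega^i_j + \tilde\omega^i_k \wedge \tilde\omega^k_j$, the quadratic term is $C^0$-close to $\omega^i_k \wedge \omega^k_j$, and $d\Phi^i_j$ is likewise small (differentiation only produces additional factors of $\psi^i$ or $\phi^i-\phi^j$). The sole source of potentially large error is $d\Psi^i_j$, since $d\psi^i = \lambda^i\theta^1$ is not small. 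But by the structural fact, $\Psi^i_j \equiv 0$ for $i,j\neq 1$, so $\tilde\Omega^i_j - \Omega^i_j$ is $C^0$-small for such indices. For $j=1$ and $i\neq 1$, a direct differentiation gives
\[
d\Psi^i_1 = \frac{\lambda^i}{(\phi^1)^2\phi^i}\,\tilde\theta^1 \wedge \tilde\theta^i + (C^0\text{-small}),
\]
so the only uncontrolled coefficient of $\tilde\Omega^i_1$ is that of $\tilde\theta^1 \wedge \tilde\theta^i$. Reading off the curvature tensor via $\tilde\Omega^i_j = \tfrac{1}{2}\tilde R^i{}_{jkl}\,\tilde\theta^k\wedge\tilde\theta^l$, this coefficient corresponds, up to the symmetries of $R$, to $R^{\tilde g}(\tilde E_1, \tilde E_i, \tilde E_i, \tilde E_1)$, as the lemma asserts.

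The principal obstacle is the structural fact about $\Psi^i_j$: verifying that it truly vanishes outside the ``first row and column'' of the index set. Without it, the naive bookkeeping would leave room for dangerous components of the more general form $R^{\tilde g}(\tilde E_i, \tilde E_j, \tilde E_1, \tilde E_k)$, and the sharper conclusion would fail. Everything else amounts to a straightforward propagation of the single hypothesis that $d\phi^i$ and $d\psi^i$ both point in the $\theta^1$ direction.
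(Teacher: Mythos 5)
Your argument is correct and follows essentially the same route as the paper: both proofs run the Cartan formalism, observe that the $\psi$-dependence of the new connection forms is confined to $\tilde{\omega}_{1}^{i}$ with leading term $\frac{\psi ^{i}}{\phi ^{1}\phi ^{i}}\tilde{\theta}^{i}$ (the paper tracks this through the $b_{jk}^{i}$ and $a_{jk}^{i}$ bookkeeping, you through the decomposition $\tilde{\omega}_{j}^{i}=\omega _{j}^{i}+\Phi _{j}^{i}+\Psi _{j}^{i}$), and then note that the only non-small contribution to the curvature forms comes from $d\psi ^{i}=\lambda ^{i}\theta ^{1}$ inside $d\tilde{\omega}_{1}^{i}$, producing a $\tilde{\theta}^{1}\wedge \tilde{\theta}^{i}$ term that corresponds to $R^{\tilde{g}}\left( \tilde{E}_{1},\tilde{E}_{i},\tilde{E}_{i},\tilde{E}_{1}\right) .$ The only cosmetic inaccuracy is describing $\Phi _{j}^{i}$ as proportional to $\phi ^{i}-\phi ^{j}$; it is really controlled by $\max_{i}\left\vert 1-\phi ^{i}\right\vert $ times quantities depending on $g$, which is exactly what your estimate uses, so the argument is unaffected.
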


\begin{remark}
Note that the meaning of \textquotedblleft close\textquotedblright\ depends
on $g.$
\end{remark}

\begin{proof}
Following (\cite{Spiv}, Chap 7) we define $\left\{ b_{jk}^{i}\right\}
,\left\{ a_{jk}^{i}\right\} ,$and $\left\{ \omega _{j}^{i}\right\} $ by 
\begin{eqnarray*}
d\theta ^{i} &=&\frac{1}{2}\sum_{j,k=1}^{n}b_{jk}^{i}\theta ^{j}\wedge
\theta ^{k}, \\
a_{jk}^{i} &=&\frac{1}{2}\left( b_{jk}^{i}+b_{ki}^{j}-b_{ij}^{k}\right)
\end{eqnarray*}%
\begin{equation*}
\omega _{j}^{i}=\sum_{k=1}^{n}a_{jk}^{i}\theta ^{k}.
\end{equation*}%
It then follows (\cite{Spiv}, Chap 7) that%
\begin{eqnarray*}
b_{jk}^{i} &=&-b_{kj}^{i}\text{ and} \\
a_{jk}^{i} &=&-a_{ik}^{j}
\end{eqnarray*}%
The forms $\Omega _{j}^{i}$ 
\begin{equation*}
\Omega _{j}^{i}\equiv d\omega _{j}^{i}+\sum_{k=1}^{n}\omega _{k}^{i}\wedge
\omega _{j}^{k}
\end{equation*}%
are then curvatures. Specifically%
\begin{equation*}
g\left( R\left( X,Y\right) E_{j},E_{i}\right) =\Omega _{j}^{i}\left(
X,Y\right) .
\end{equation*}

We now check how these functions get changed for the new frame.%
\begin{eqnarray*}
d\tilde{\theta}^{i} &=&d\left( \phi ^{i}\theta ^{i}\right) \\
&=&d\phi ^{i}\wedge \theta ^{i}+\frac{1}{2}\sum_{j,k=1}^{n}\phi
^{i}b_{jk}^{i}\theta ^{j}\wedge \theta ^{k} \\
&=&\psi ^{i}\theta ^{1}\wedge \theta ^{i}+\frac{1}{2}\sum_{j,k=1}^{n}\phi
^{i}b_{jk}^{i}\theta ^{j}\wedge \theta ^{k} \\
&=&\frac{1}{2}\frac{\psi ^{i}}{\phi ^{1}\phi ^{i}}\tilde{\theta}^{1}\wedge 
\tilde{\theta}^{i}-\frac{1}{2}\frac{\psi ^{i}}{\phi ^{1}\phi ^{i}}\tilde{%
\theta}^{i}\wedge \tilde{\theta}^{1} \\
&&+\frac{1}{2}\sum_{j,k=1}^{n}\frac{\phi ^{i}}{\phi ^{j}\phi ^{k}}b_{jk}^{i}%
\tilde{\theta}^{j}\wedge \tilde{\theta}^{k}.
\end{eqnarray*}

So the only $\tilde{b}_{jk}^{i}$s that depend on $\psi $ are 
\begin{equation*}
\tilde{b}_{1i}^{i}=-\tilde{b}_{i1}^{i}=\frac{\psi ^{i}}{\phi ^{1}\phi ^{i}}+%
\frac{\phi ^{i}}{\phi ^{1}\phi ^{i}}b_{1i}^{i}.
\end{equation*}%
So among the 
\begin{equation*}
\tilde{a}_{jk}^{i}=\frac{1}{2}\left( \tilde{b}_{jk}^{i}+\tilde{b}_{ki}^{j}-%
\tilde{b}_{ij}^{k}\right)
\end{equation*}%
the ones potentially affected by $\psi $ are $\tilde{a}_{1i}^{i},\tilde{a}%
_{i,1}^{i},$ and $\tilde{a}_{i,i}^{1}.$However, the antisymmetry $\tilde{a}%
_{jk}^{i}=-\tilde{a}_{ik}^{j}$ implies that $\tilde{a}_{i,1}^{i}=0,$ and
that $\tilde{a}_{1i}^{i}=-\tilde{a}_{i,i}^{1}.$The antisymmetry of the $b$s
then gives us 
\begin{equation*}
\tilde{a}_{1i}^{i}=-\tilde{a}_{i,i}^{1}=\tilde{b}_{1i}^{i}.
\end{equation*}

So in fact, the $\tilde{a}$s that depend on $\psi $ are 
\begin{eqnarray*}
\tilde{a}_{1i}^{i} &=&-\tilde{a}_{i,i}^{1} \\
&=&\frac{1}{2}\left( \tilde{b}_{1i}^{i}+\tilde{b}_{ii}^{1}-\tilde{b}%
_{i1}^{i}\right) \\
&=&\left( \frac{\psi ^{i}}{\phi ^{1}\phi ^{i}}+\frac{\phi ^{i}}{\phi
^{1}\phi ^{i}}b_{1i}^{i}\right) \\
&=&\left( \frac{\psi ^{i}}{\phi ^{1}\phi ^{i}}+\frac{\phi ^{i}}{\phi
^{1}\phi ^{i}}a_{1i}^{i}\right)
\end{eqnarray*}

Thus the only $\omega _{j}^{i}$s that depend on $\psi $ are%
\begin{eqnarray*}
\tilde{\omega}_{1}^{i} &=&-\tilde{\omega}_{i}^{1} \\
&=&\tilde{a}_{1i}^{i}\tilde{\theta}^{i}+\sum_{k\neq i}\tilde{a}_{1k}^{i}%
\tilde{\theta}^{k} \\
&=&\frac{\psi ^{i}}{\phi ^{1}\phi ^{i}}\tilde{\theta}^{i}+\sum_{k}a_{1k}^{i}%
\theta ^{k}+O\left( C^{0}\right)
\end{eqnarray*}%
where by $O\left( C^{0}\right) $ we mean $O\left( \max \left\{ 1-\phi
^{i}\right\} \right) \sum_{k}\theta ^{k}.$

It follows that the only $\Omega _{j}^{i}$s that depend on the $\lambda $s
are 
\begin{equation*}
\Omega _{1}^{i}=-\Omega _{i}^{1}=d\tilde{\omega}_{1}^{i}+\sum_{k=1}^{n}%
\omega _{k}^{i}\wedge \omega _{1}^{k}.
\end{equation*}

Only the first term feels this \textquotedblleft $C^{2}$--effect%
\textquotedblright . It is%
\begin{equation*}
d\tilde{\omega}_{1}^{i}=d\left( \frac{\psi ^{i}}{\phi ^{1}\phi ^{i}}\tilde{%
\theta}^{i}\right) +d\omega _{1}^{i}+O\left( C^{1}\right) ,
\end{equation*}

where by $O\left( C^{1}\right) $ we mean 
\begin{equation*}
O\left( \max \left\{ 1-\phi ^{i},\psi ^{i}\right\} \right) \sum_{k}d\theta
^{k}+d\left( O\left( \max \left\{ 1-\phi ^{i}\right\} \right) \sum_{k}\theta
^{k}\right) .
\end{equation*}

We conclude that 
\begin{equation*}
d\tilde{\omega}_{1}^{i}=\lambda ^{i}\theta ^{1}\wedge \tilde{\theta}%
^{i}+d\omega _{1}^{i}+O\left( C^{1}\right) .
\end{equation*}%
So we note that the only curvatures affected by the $C^{2}$ change are the
sectional curvature spanned by $E_{1}$ and $E_{i}$
\end{proof}

In our applications we will also need to know something more specific about
how the other components of the curvature tensor change with such a
deformation.

\begin{corollary}
\label{Owen}If at most one of the indices $\left\{ i,j,k,l\right\} $ is $1,$
then%
\begin{equation*}
\left\vert R^{\tilde{g}}\left( \tilde{E}_{i},\tilde{E}_{j},\tilde{E}_{k},%
\tilde{E}_{l}\right) -R\left( E_{i},E_{j},E_{k},E_{l}\right) \right\vert
\leq O\left( \max \left\{ \psi ^{i},1-\phi ^{i}\right\} \right) O\left(
\max_{i,j,k}\left\{ \left\vert \omega _{j}^{i}\right\vert ,\left\vert
da_{jk}^{i}\right\vert \right\} \right)
\end{equation*}
\end{corollary}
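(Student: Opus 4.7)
The plan is to revisit the proof of Lemma \ref{C^1 principle} with finer bookkeeping on the size of each summand that appears in $\tilde{\Omega}_c^d - \Omega_c^d$. From that proof one reads off that $\tilde{\omega}_j^i - \omega_j^i$ is of size $O(\max\{\psi^i,1-\phi^i\})$ as a $1$--form; the ``large'' piece $\frac{\psi^i}{\phi^1\phi^i}\tilde{\theta}^i$ is present only for the pairs $(i,j) = (1,m)$ or $(m,1)$, and the full $C^2$--effect in $\tilde{\Omega}$ appears exclusively through the summand $\lambda^i\,\theta^1\wedge \tilde{\theta}^i$ inside $d\tilde{\omega}_1^i = -d\tilde{\omega}_i^1$. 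In particular $\tilde{\Omega}_c^d$ contains no $\lambda$--term whenever $c,d\neq 1$, and when $c$ or $d$ equals $1$ the $\lambda$--piece is always wedged with $\theta^1$.

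Writing $R^{\tilde{g}}(\tilde{E}_i,\tilde{E}_j,\tilde{E}_k,\tilde{E}_l) = \tilde{\Omega}_k^l(\tilde{E}_i,\tilde{E}_j)$, I would argue that under the hypothesis ``at most one of $\{i,j,k,l\}$ is $1$'' the $\lambda$--contribution always drops out. If the special index lies in $\{i,j\}$, or there is none, then $k,l\neq 1$ and $\tilde{\Omega}_k^l$ carries no $\lambda$ term at all. If instead the special index lies in $\{k,l\}$, then the $\lambda$--piece of $\tilde{\Omega}_k^l$ is a constant multiple of $\theta^1\wedge\tilde{\theta}^{\ast}$, and on the pair $(\tilde{E}_i,\tilde{E}_j)$ with $i,j\neq 1$ this vanishes because $\theta^1(\tilde{E}_m) = (1/\phi^m)\theta^1(E_m) = 0$ for $m\neq 1$.

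With the $C^2$--effect eliminated, I would expand
\begin{equation*}
\tilde{\Omega}_c^d - \Omega_c^d = d(\tilde{\omega}_c^d - \omega_c^d) + \sum_k\bigl[(\tilde{\omega}_k^c - \omega_k^c)\wedge\tilde{\omega}_d^k + \omega_k^c\wedge(\tilde{\omega}_d^k - \omega_d^k)\bigr]
\end{equation*}
and bound each piece. In the $d(\tilde{\omega}-\omega)$ summand, once the $\lambda$--bearing contribution has been discarded, the remaining derivative falls either on the scalar coefficients $\psi^i/(\phi^1\phi^i)$ and $\phi^i/(\phi^j\phi^k)$ (producing further $\psi\cdot\theta^1$ that is killed on the chosen pair, or $(1-\phi)\cdot|da|$), or on the frame $1$--forms $\tilde{\theta}^i$ (producing the $|d\tilde{\theta}^i|\sim|\omega|$ factor). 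The two wedge differences contribute $(\tilde{\omega}-\omega)\wedge\tilde{\omega}$--terms of the same order. Each surviving term therefore factors as $O(\max\{\psi^i,1-\phi^i\})\cdot O(\max\{|\omega_j^i|,|da_{jk}^i|\})$, and since $\tilde{E}_m = E_m/\phi^m$ with $\phi^m$ close to $1$, passing from the $\tilde{E}$--frame to the $E$--frame (and from $R^{\tilde{g}}$ to $R$) costs only an $O(1)$ distortion.

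The main obstacle is the bookkeeping in the second paragraph: I need to check each configuration of ``exactly one index equal to $1$'' carefully enough to be sure that no $\lambda$ residue survives, either in the curvature $2$--form itself or under the pairing, and to verify that the ``$O(C^1)$'' packaging used in the proof of the lemma really splits cleanly into the product form demanded by the corollary. Once that combinatorial/algebraic check is in place, the bound is routine estimation of the surviving $\psi$-- and $(1-\phi)$--dependent pieces, essentially the $C^0$ tail of the lemma's proof repackaged with explicit factorizations.
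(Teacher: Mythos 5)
Your proposal follows essentially the same route as the paper: expand $\Omega = d\omega + \sum\omega\wedge\omega$ in the Cartan formalism of Lemma \ref{C^1 principle}, observe that under the index hypothesis the $\lambda$--terms (the $C^{2}$--effect) never survive, and bound the remaining differences $d\tilde{\omega}-d\omega$ and $\tilde{\omega}\wedge\tilde{\omega}-\omega\wedge\omega$ by $O\left( \max\left\{ \psi^{i},1-\phi^{i}\right\}\right)$ times $O\left( \max\left\{ \left\vert \omega_{j}^{i}\right\vert ,\left\vert da_{jk}^{i}\right\vert \right\}\right)$, using $\left\vert b_{jk}^{i}\right\vert \lesssim \left\vert \omega_{j}^{i}\right\vert$. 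The only difference is presentational: you spell out the case analysis showing the $\lambda$--piece is either absent or killed by pairing against frame vectors orthogonal to $E_{1}$, which the paper leaves implicit by appealing to the conclusion of the lemma.
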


\begin{proof}
We have%
\begin{equation*}
\left\vert \sum_{p=1}^{n}\tilde{\omega}_{p}^{i}\wedge \tilde{\omega}%
_{j}^{p}\left( \tilde{E}_{l},\tilde{E}_{k}\right) -\sum_{p=1}^{n}\omega
_{p}^{i}\wedge \omega _{j}^{p}\left( E_{l},E_{k}\right) \right\vert \leq
\left( O\left( \max \left\{ 1-\phi ^{i},\psi ^{i}\right\} \max \left\{
b_{jk}^{i}\right\} \right) \right) ^{2},
\end{equation*}%
and%
\begin{eqnarray*}
d\tilde{\omega}_{j}^{i}\left( \tilde{E}_{l},\tilde{E}_{p}\right) &=&d\left[
\sum_{k=1}^{n}\tilde{a}_{jk}^{i}\tilde{\theta}^{k}\right] \left( \tilde{E}%
_{l},\tilde{E}_{p}\right) \\
&=&\left[ \sum_{k=1}^{n}d\tilde{a}_{jk}^{i}\wedge \tilde{\theta}^{k}+\tilde{a%
}_{jk}^{i}d\tilde{\theta}^{k}\right] \left( \tilde{E}_{l},\tilde{E}%
_{p}\right) \\
&=&d\omega _{j}^{i}\left( E_{l},E_{p}\right) \\
&&+O\left( \max \left\{ \psi ^{i},1-\phi ^{i}\right\} \right)
\sum_{k=1}^{n}da_{jk}^{i}\wedge \theta ^{k}\left( E_{l},E_{p}\right)
+O\left( \max \left\{ \psi ^{i},1-\phi ^{i}\right\} \right)
\sum_{k=1}^{n}a_{jk}^{i}d\theta ^{k}\left( E_{l},E_{p}\right)
\end{eqnarray*}

Combining these formulas yields%
\begin{equation*}
\left\vert R^{\tilde{g}}-R\right\vert \leq O\left( C^{1}\right) O\left(
\max_{i,j,k}\left\{ \left\vert \omega _{j}^{i}\right\vert ,\left\vert
b_{jk}^{i}\right\vert ,\left\vert da_{jk}^{i}\right\vert \right\} \right)
\end{equation*}%
Since $O\left( \max_{i,j,k}\left\{ \left\vert b_{jk}^{i}\right\vert \right\}
\right) \leq O\left( \max_{i,j}\left\vert \omega _{j}^{i}\right\vert \right) 
$ the result follows.
\end{proof}

In addition to the general set up of Corollary \ref{S with a submersion} we
also assume,

\begin{description}
\item[1] There is a smooth distance function $r$ defined on on a
neighborhood of $\mathcal{S}$ whose gradient we call $X.$

\item[2] $X$ is tangent to all the flats in $\mathcal{S}.$
\end{description}

Let $\mathcal{O}$ be a distribution that is normal to each $S\in \mathcal{S}%
, $ and let 
\begin{equation*}
\varphi \equiv f\circ r
\end{equation*}%
where $f:\mathbb{R}\longrightarrow \mathbb{R},$ and is constant outside of a
compact interval.

Let $\tilde{g}$ be obtained from $g$ by multiplying the lengths of all
vectors in $\mathcal{O}$ by $\varphi ,$ while keeping the orthogonal
complement and the metric on the orthogonal complement of $\mathcal{O}$
fixed. That is, $\tilde{g}$ is obtained from $g$ by doing a partial
conformal change with distribution $\mathcal{D}=\mathcal{O}$ and conformal
factor $\varphi ^{2}.$

Now let $\left\{ E_{i}\right\} $ be an orthonormal frame for $g$ with $%
X=E_{1},$ and $\mathrm{span}\left\{ E_{2},\ldots ,E_{p}\right\} =\mathcal{O}%
. $ Setting $\phi ^{i}=\varphi $ for $i=2,\ldots p,$ and $\phi ^{i}\equiv 1$
otherwise, then gives an example of the above lemma.

Applying the last formula in the proof of the preceding lemma to our
situation yields.

\begin{proposition}
\label{redistr curv}For $V\in \mathcal{O}$%
\begin{equation*}
R^{\tilde{g}}\left( V,X,X,V\right) =R^{g}\left( V,X,X,V\right) -\varphi
^{\prime \prime }\left\vert V\right\vert _{g}^{2}\left\vert X\right\vert
_{g}^{2}+O\left( C^{1}\right)
\end{equation*}%
where $\varphi ^{\prime \prime }=D_{X}D_{X}\left( \varphi \right) =f^{\prime
\prime }\circ r.$
\end{proposition}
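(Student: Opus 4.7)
The plan is to apply Lemma \ref{C^1 principle} directly. I choose a $g$-orthonormal frame $\{E_i\}$ on a neighborhood of $\mathcal{S}$ with $E_1 = X$ and $\operatorname{span}\{E_2, \ldots, E_p\} = \mathcal{O}$; then $\tilde{E}_i = E_i/\phi^i$ is $\tilde{g}$-orthonormal, where $\phi^i = \varphi$ for $2 \leq i \leq p$ and $\phi^i = 1$ otherwise. Since $X = \nabla r$, the dual coframe satisfies $\theta^1 = dr$, so the chain rule gives $d\varphi = \varphi'\theta^1$ and $d\varphi' = \varphi''\theta^1$. Hence the functions $\psi^i = \varphi'$ and $\lambda^i = \varphi''$ (for $2 \leq i \leq p$, and zero otherwise) verify all hypotheses of Lemma \ref{C^1 principle}.

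The final display in that lemma's proof reads
\[
d\tilde{\omega}_1^i = \lambda^i\, \theta^1 \wedge \tilde{\theta}^i + d\omega_1^i + O(C^1),
\]
and one checks from the estimates in the proof of Corollary \ref{Owen} that the wedge-product difference $\sum_k \tilde{\omega}_k^i \wedge \tilde{\omega}_1^k - \sum_k \omega_k^i \wedge \omega_1^k$ is $O(C^1)$ as well (each factor $\tilde\omega$ differs from $\omega$ by a 1-form with $O(\psi, 1-\phi)$ coefficients). Adding these contributions in the Cartan structure equation gives
\[
\tilde{\Omega}_1^i = \Omega_1^i + \varphi''\, \theta^1 \wedge \tilde{\theta}^i + O(C^1).
\]

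By bilinearity of $R$ in each slot, it suffices to prove the formula for $V = E_i$ with $2 \leq i \leq p$. Evaluating the identity above on $(\tilde{E}_i, \tilde{E}_1)$, the wedge product $(\theta^1 \wedge \tilde{\theta}^i)(\tilde{E}_i, \tilde{E}_1) = -1$, while tensoriality and $\tilde{E}_i = E_i/\varphi$ convert $\Omega_1^i(\tilde{E}_i, \tilde{E}_1)$ into $\varphi^{-1} R^g(V, X, X, V)$. Multiplying through by $\varphi^2$ to pass from the $\tilde g$-unit frame back to the actual vectors $(V, X)$ yields $R^{\tilde g}(V, X, X, V) = \varphi R^g(V, X, X, V) - \varphi^2 \varphi'' + O(C^1)$, and then $\varphi = 1 + O(C^1)$ absorbs the factor $\varphi$ and $(\varphi^2 - 1)$ into the error term — the latter staying $O(C^1)$ because $\varphi''$ is bounded on the compact support of $f - \mathrm{const}$. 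Bilinearity then inserts the $|V|_g^2 |X|_g^2$ factors. The main obstacle is precisely this last piece of bookkeeping: one must extract the clean $\varphi^2\varphi''$ expression \emph{before} replacing $\varphi$ by $1$, since a prematurely discarded $(\varphi - 1)\varphi''$ error is comparable in size to the leading $-\varphi''$ correction and would spoil the statement.
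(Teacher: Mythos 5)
Your argument is correct and is essentially the paper's own proof: the paper obtains Proposition \ref{redistr curv} precisely by specializing the final display in the proof of Lemma \ref{C^1 principle} to the frame with $E_1=X$, $\mathrm{span}\{E_2,\ldots,E_p\}=\mathcal{O}$, $\phi^i=\varphi$, exactly as you do. Your additional bookkeeping (evaluating $\theta^1\wedge\tilde\theta^i$ on the rescaled frame, restoring the $\varphi^2$ factor before replacing $\varphi$ by $1$, and absorbing $(\varphi^2-1)\varphi''$ into $O(C^1)$) only makes explicit what the paper leaves implicit.
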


Combining this with the previous lemma and the proofs of Theorem \ref{C^2
close} and Corollary \ref{S with a submersion} gives us.

\begin{theorem}
\label{C^1 close copy(1)}There is an $\varepsilon >0$ so that $\left( \Sigma
,\tilde{g}\right) $ is nonnegatively curved provided,

\begin{itemize}
\item $\varphi $ is sufficiently close to $1$ in the $C^{1}$--topology, and

\item For any $V\in \mathcal{O}$ 
\begin{equation*}
\varepsilon \left\langle R^{g}\left( X,V\right) V,X\right\rangle -\varphi
^{\prime \prime }\left\vert V\right\vert _{g}^{2}\left\vert X\right\vert
_{g}^{2}>0
\end{equation*}%
Moreover $\tilde{g}$ has precisely the same $0$ curvature planes as $g.$
\end{itemize}
\end{theorem}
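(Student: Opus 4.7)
The plan is to regard Theorem \ref{C^1 close copy(1)} as a quantified $C^1$-analogue of Theorem \ref{C^2 close} and Corollary \ref{S with a submersion}, in which the $C^2$-effect of $\varphi$ on the curvature tensor has been isolated by Lemma \ref{C^1 principle} and computed by Proposition \ref{redistr curv}. Because the conformal factor $\varphi^{2}$ is applied only to vectors in $\mathcal{O}$ while the metric on vectors tangent to the members of $\mathcal{S}$ is unchanged, the Proposition of Section 2 implies that every $S \in \mathcal{S}$ remains a totally geodesic flat of $\tilde g$. Combined with the submersion set-up of Corollary \ref{S with a submersion}, this reduces the question to showing that horizontal planes of $(M,\tilde g)$ near old zero planes of $g$ have nonnegative $\tilde g$-curvature and vanish only at those old zero planes themselves.

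Following the strategy of Theorem \ref{C^2 close}, I would parametrize such planes as $\Pi_{X,W,Z,V}^{\sigma,\tau}$ and study
\begin{equation*}
\tilde P(\sigma,\tau) = \mathrm{curv}^{\tilde g}(X+\sigma Z,\, W+\tau V).
\end{equation*}
Preservation of the totally geodesic flats forces the constant and linear terms of $\tilde P$ to vanish, exactly as in the proof of Theorem \ref{C^2 close}, so everything hinges on showing that the quadratic part of $\tilde P$ in $(\sigma,\tau)$ stays positive on $S^{1}$ after the deformation.

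The next step is to deploy the frame machinery with $E_{1} = X$ and $\mathrm{span}\{E_{2},\ldots,E_{p}\} = \mathcal{O}$, which places us exactly in the setting of Lemma \ref{C^1 principle}. That lemma says every component of $R^{\tilde g}$ other than those reducing by curvature symmetries to $R^{\tilde g}(\tilde E_{1},\tilde E_{i},\tilde E_{i},\tilde E_{1})$ is $C^{1}$-close to its $R^{g}$-counterpart, and Corollary \ref{Owen} makes this closeness quantitative. For the one genuinely $C^{2}$-sensitive family---planes spanned by $X$ and a vector in $\mathcal{O}$---Proposition \ref{redistr curv} gives
\begin{equation*}
R^{\tilde g}(V,X,X,V) = R^{g}(V,X,X,V) - \varphi'' \, |V|_{g}^{2}\, |X|_{g}^{2} + O(C^{1}),
\end{equation*}
so the theorem's hypothesis forces $R^{\tilde g}(V,X,X,V) \geq (1-\varepsilon)\,R^{g}(V,X,X,V) + O(C^{1})$.

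After splitting the perturbing vectors $Z$ and $V$ into their $\mathcal{O}$- and $T\mathcal{S}$-components, every contribution to the quadratic form of $\tilde P$ is either the harmless $(1-\varepsilon)$-shrinkage of an $R^{g}(X,V,V,X)$-type term or a $C^{1}$-small perturbation controlled by Corollary \ref{Owen}. I expect the main obstacle to be the final bookkeeping: one must verify that the $(1-\varepsilon)$ factor together with the $C^{1}$-size errors still leaves the quadratic form strictly positive on $S^{1}$. This is where compactness of $M$, the precise $C^{1}$-smallness of $\varphi$, and the strict positive lower bound supplied by the quadratic nondegeneracy assumption inherited from Corollary \ref{S with a submersion} are combined. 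Once this is in place, the reasoning of Theorem \ref{C^2 close} and Corollary \ref{S with a submersion} transfers verbatim and yields both the nonnegativity of $\tilde g$ on $\Sigma$ and the coincidence of its zero planes with those of $g$.
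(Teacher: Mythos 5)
Your proposal is correct and follows essentially the route the paper intends: the paper's own ``proof'' is just the one-line instruction to combine Proposition \ref{redistr curv} and Lemma \ref{C^1 principle} (with Corollary \ref{Owen} for the quantitative control of the remaining components) with the arguments of Theorem \ref{C^2 close} and Corollary \ref{S with a submersion}, and your plan spells out exactly that combination, including the correct reading of the hypothesis as leaving a $(1-\varepsilon)$-portion of $R^{g}(X,V,V,X)$ to absorb the $C^{1}$-errors while the flats, hence the constant and linear terms of $\tilde P$, are preserved.
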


\begin{remark}
There are three further complications when the ideas of the previous two
sections are applied in \cite{PetWilh2}.

First, the parameter that describes the orthogonal partial conformal change
is related to the parameter that describes one of the Cheeger deformations,
and hence we can not merely use Lemma \ref{C^1 principle}, but also must use
Corollary \ref{Owen}. This is not a difficult point, but it is better to
address it concretely.

Second, the actual field, $X,$ that is used has a singularity along a set
where it is multi-valued. This causes some of the connection forms for the
original metric to blow up near these points. This means that the $C^{1}$%
--change in $\varphi $ can actually have a large effect on some curvatures.
The set up is such that this effect only increases curvatures. This is also
not a difficult point, but it is one that is best addressed concretely.

Lastly, we must verify that the quadratic nondegeneracy condition of Theorem %
\ref{C^2 close} holds on $\Sigma ^{7}.$ In the end we shall see that it only
holds generically. This problem and its resolution turn out to be related to
the second\ problem about the blow up of certain connection forms, as the
places where the nondegeneracy condition fails are precisely at the poles of 
$X.$

In subsection 6.1, we provide a tool that shows how Cheeger deformations can
be useful in simplifying the problem of verifying the nondegeneracy
condition. We apply this tool in \cite{PetWilh2} to show that $\Sigma ^{7}$
satisfies the nondegeneracy condition except at the poles of $X.$

This situation is far from ideal since it means that we almost have
quadratic \emph{degeneracy}\textbf{\ }as we approach a pole of $X.$ We will
show that the orthogonal partial conformal change actually improves this
situation, and makes it as nice as it can be.

This is accomplished by proving a further result that shows that the
orthogonal partial conformal change gives us quadratic nondegeneracy
condition, in a quantitative sense, as close as we please to the poles of $%
X. $ This result exploits the blow up of the connection forms near the poles
of $X,$and is also proven in \cite{PetWilh2}.
\end{remark}

\section{Integrally Positive Curvature}

Here we give abstract criteria that are sufficient to create integrally
positive curvature on a totally geodesic flat, when the fibers of a
Riemannian submersion are scaled. The only application of the theorem that
we are aware of is to the Gromoll-Meyer sphere with the metric from \cite%
{Wilh}. The issue of why the metric of \cite{Wilh} satisfies the hypotheses
of this theorem will be addressed in \cite{PetWilh2}.

Throughout this section let $\left( M,g_{0}\right) $ be a Riemannian
manifold with nonnegative sectional curvature and 
\begin{equation*}
\pi :\left( M,g_{0}\right) \longrightarrow B
\end{equation*}%
a Riemannian submersion. Let $g_{s}$ be the metric obtained from $g_{0}$ by
scaling the lengths of the fibers of $\pi $ by 
\begin{equation*}
\sqrt{1-s^{2}}.
\end{equation*}%
As usual we use the superscripts $^{\mathcal{H}}$ and $^{\mathcal{V}}$ to
denote the horizontal and vertical parts of the vectors, $R$ and $A$ are the
curvature and $A$-tensors for the unperturbed metric $g,$ $R^{g_{s}}$
denotes the new curvature tensor of $g_{s},$ and $R^{B}$ is the curvature
tensor of the base. We use the term \textquotedblleft geodesic
field\textquotedblright\ for any field $X$ so that $\nabla _{X}X=0.$

\begin{theorem}
\label{pos int}Let $T\subset M$ be a totally geodesic, flat torus spanned by
commuting, orthogonal, geodesic fields $X$ and $W$ such that $X$ is
horizontal for $\pi $ and $D\pi \left( W\right) $ is a Jacobi field along
the integral curves of $D\pi \left( X\right) .$

Then%
\begin{equation}
R^{g_{s}}\left( X,W,W,X\right) =-\frac{s^{2}}{2}\left( D_{X}\left(
D_{X}\left\vert W^{\mathcal{H}}\right\vert ^{2}\right) \right)
+s^{4}\left\vert A_{X}W^{\mathcal{V}}\right\vert ^{2}.
\end{equation}%
In particular, if $c$ is an integral curve of $D\pi \left( X\right) $ from a
zero of $\left\vert W^{\mathcal{H}}\right\vert $ to a maximum of $\left\vert
W^{\mathcal{H}}\right\vert $ along $c,$ then 
\begin{equation*}
\int_{c}\mathrm{curv}_{g_{s}}\left( X,W\right) =s^{4}\int_{c}\left\vert
A_{X}W^{\mathcal{V}}\right\vert ^{2}.
\end{equation*}%
So the curvature of \textrm{span}$\left\{ X,W\right\} $ is integrally
positive along $c,$ provided $\left\vert A_{X}W^{\mathcal{V}}\right\vert
^{2} $ is not identically $0$ along $c.$
\end{theorem}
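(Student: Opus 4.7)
My plan is to reduce the entire identity to the Jacobi equation in $(M, g_s)$. First I would verify that $W$ remains a Jacobi field for $g_s$: the canonical variation preserves horizontal geodesics, so $X$ is still a geodesic field of $g_s$; since $[X, W]=0$ is a property of the smooth structure, it persists; hence $W$ is a Jacobi field along each integral curve of $X$ in $(M, g_s)$. Taking the inner product of the Jacobi equation $\nabla^{g_s}_X \nabla^{g_s}_X W + R^{g_s}(W, X) X = 0$ with $W$ in the $g_s$ metric and using
\[
\frac{1}{2} D_X D_X |W|^2_{g_s} \;=\; g_s\bigl(\nabla^{g_s}_X \nabla^{g_s}_X W,\, W\bigr) + |\nabla^{g_s}_X W|^2_{g_s}
\]
rearranges to
\[
R^{g_s}(X, W, W, X) \;=\; |\nabla^{g_s}_X W|^2_{g_s} \;-\; \frac{1}{2} D_X D_X |W|^2_{g_s}.
\]

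Next I would identify the two scalars. A short argument using orthogonality, the geodesic property, total geodesicity, and flatness of the torus forces $\nabla^{g_0}_X W = 0$: the lengths $|X|$ and $|W|$ are constant on the torus, total geodesicity places $\nabla^{g_0}_X W$ in $\mathrm{span}\{X, W\}$, and pairing with $X$ and with $W$ makes both coefficients vanish. A Koszul computation for the canonical variation then establishes that $\nabla^{g_s}_X Y = \nabla^{g_0}_X Y$ for horizontal $Y$, while $\nabla^{g_s}_X V = \nabla^{g_0}_X V - s^2 A_X V$ for vertical $V$. Decomposing $W = W^{\mathcal{H}} + W^{\mathcal{V}}$ and applying $\nabla^{g_0}_X W = 0$ yields $\nabla^{g_s}_X W = -s^2 A_X W^{\mathcal{V}}$, a horizontal vector whose $g_s$-norm agrees with its $g_0$-norm. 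Hence $|\nabla^{g_s}_X W|^2_{g_s} = s^4 |A_X W^{\mathcal{V}}|^2$. For the Hessian term, $|W|^2_{g_s} = |W^{\mathcal{H}}|^2 + (1 - s^2)|W^{\mathcal{V}}|^2$ combined with the constancy of $|W|^2_{g_0}$ along $X$ gives $D_X D_X |W|^2_{g_s} = s^2 D_X D_X |W^{\mathcal{H}}|^2$, and the pointwise formula follows.

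For the integrated statement I would apply the fundamental theorem of calculus to $\int_c D_X D_X |W^{\mathcal{H}}|^2$: the integral equals $D_X |W^{\mathcal{H}}|^2$ evaluated at the endpoints of $c$, and both endpoints contribute zero, one being a minimum of the nonnegative function $|W^{\mathcal{H}}|^2$ and the other an interior maximum of $|W^{\mathcal{H}}|$. Only the $s^4 \int_c |A_X W^{\mathcal{V}}|^2$ term survives.

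The main computational obstacle is the Koszul verification that $\nabla^{g_s}_X V = \nabla^{g_0}_X V - s^2 A_X V$ on vertical $V$. One has to check that the vertical-vertical part of the new connection is unchanged, which works because Lie brackets of vertical fields remain vertical so that the relevant horizontal bracket term in the Koszul formula drops out, and that the horizontal part acquires precisely the $-s^2 A_X V$ correction from the $(1-s^2)$ rescaling of vertical inner products. Once this is in hand the rest of the argument is elementary.
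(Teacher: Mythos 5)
Your proof is correct, and it takes a genuinely different route from the paper's. The paper stays in the submersion picture: it quotes the canonical--variation curvature formulas (\ref{Detlef equation}), specializes them to obtain $\mathrm{curv}^{g_{s}}\left( X,W\right) =s^{2}\mathrm{curv}^{B}\left( X,W^{\mathcal{H}}\right) -\left( 1-s^{2}\right) s^{2}\left\vert A_{X}W^{\mathcal{V}}\right\vert ^{2}$, and then uses the hypothesis that $D\pi \left( W\right) $ is a Jacobi field on the base together with $\nabla _{X}W=0$ (which gives $A_{X}W^{\mathcal{V}}=-\left( \nabla _{X}W^{\mathcal{H}}\right) ^{\mathcal{H}}=-\nabla _{X}^{B}W^{\mathcal{H}}$) to rewrite $\mathrm{curv}^{B}\left( X,W^{\mathcal{H}}\right) $ as $-\frac{1}{2}D_{X}D_{X}\left\vert W^{\mathcal{H}}\right\vert ^{2}+\left\vert A_{X}W^{\mathcal{V}}\right\vert ^{2}$. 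You instead work entirely upstairs in $\left( M,g_{s}\right) $: the integral curves of $X$ are horizontal $g_{0}$--geodesics and hence $g_{s}$--geodesics, so $\left[ X,W\right] =0$ makes $W$ a $g_{s}$--Jacobi field along them, and you then compute $\nabla _{X}^{g_{s}}W=-s^{2}A_{X}W^{\mathcal{V}}$ and $\left\vert W\right\vert _{g_{s}}^{2}$ directly from Koszul formulas for the scaled metric. Your two connection identities are right (the vertical bracket term drops because the fibers are integrable, and the $\left( 1-s^{2}\right) $ rescaling produces exactly the $-s^{2}A_{X}V$ correction), as is the reduction $\nabla _{X}^{g_{0}}W=0$, which the paper asserts without proof and you justify. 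Notably, your argument never invokes the hypothesis that $D\pi \left( W\right) $ is a Jacobi field on $B$; commutation supplies the Jacobi property upstairs, so your proof is somewhat more self-contained, at the cost of re-deriving the connection of the canonical variation rather than citing the curvature formulas of \cite{Bes}, \cite{GromWals}. What the paper's route buys is that its intermediate steps--the expression through $\mathrm{curv}^{B}$, the identity $A_{X}W^{\mathcal{V}}=-\nabla _{X}^{B}W^{\mathcal{H}}$, and the base computations of Lemma \ref{Abstract (1,3) tensors}--are reused later for the $\left( 1,3\right) $--tensor formulas needed to control nearby planes, which your computation does not produce. Your handling of the integral statement (fundamental theorem of calculus, with $D_{X}\left\vert W^{\mathcal{H}}\right\vert ^{2}$ vanishing at a zero of the nonnegative function and at the maximum) fills in what the paper leaves implicit.
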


The reader should note that the above curvature formula is as important as
the fact that the integral is positive. Since $X$ is a geodesic field, the
larger term $-\frac{s^{2}}{2}\left( D_{X}\left( D_{X}\left\vert W^{\mathcal{H%
}}\right\vert ^{2}\right) \right) $ is the Hessian, $\mathrm{Hess}f\left(
X,X\right) ,$ of the function 
\begin{equation*}
f=-\frac{s^{2}}{2}\left\vert W^{\mathcal{H}}\right\vert ^{2}.
\end{equation*}%
Therefore, we can cancel it with a conformal change involving $f$. Such a
conformal change will create other terms of order $s^{4}$ in our expression
for $\mathrm{curv}_{g_{s}}\left( X,W\right) $. To compare these terms with $%
s^{4}\left\vert A_{X}W^{\mathcal{V}}\right\vert ^{2},$ we will evaluate $%
A_{X}W^{\mathcal{V}}$ in the presence of some additional hypotheses, after
we prove the theorem above. These additional hypotheses will also allow us
to obtain formulas for the $\left( 1,3\right) $--tensor, $R^{g_{s}}\left(
W,X\right) X$ and the horizontal part of the $\left( 1,3\right) $--tensor, $%
R^{g_{s}}\left( X,W\right) W.$ To actually put positive curvature on the
Gromoll--Meyer sphere, or indeed to perturb a neighborhood of any totally
geodesic flat to positive curvature, these formulas will of course be
necessary.

After refining our formula for $\mathrm{curv}_{g_{s}}\left( X,W\right) ,$ we
will explain in the next subsection precisely how to combine fiber scaling
and a conformal change to put positive curvature on a single initially flat
torus, subject to a few additional hypotheses.

Scaling the fibers of a Riemannian submersion was dubbed the
\textquotedblleft canonical variation\textquotedblright\ in \cite{Bes}. One
can find formulas for how curvature changes under the canonical variation in
any of \cite{Bes}, \cite{Dear1}, \cite{GromDur}, or \cite{GromWals}. To
ultimately get positive curvature on the Gromoll-Meyer sphere, we have to
control the curvature tensor in an entire neighborhood in the Grassmannian,
so we will need several of these formulas. In fact, since the particular
\textquotedblleft $W$\textquotedblright\ that we have in mind is neither
horizontal nor vertical for $\pi ,$ we need multiple formulas just to find 
\textrm{curv}$\left( X,W\right) .$

Given vertical vectors $U,V\in \mathcal{V}$ and horizontal vectors $X,Y,Z\in 
\mathcal{H},$ for $\pi :M\rightarrow B$ we have\addtocounter{algorithm}{1} 
\begin{eqnarray}
\left( R^{g_{s}}\left( X,V\right) U\right) ^{\mathcal{H}} &=&\left(
1-s^{2}\right) \left( R\left( X,V\right) U\right) ^{\mathcal{H}}+\left(
1-s^{2}\right) s^{2}A_{A_{X}U}V  \notag \\
R^{g_{s}}(V,X)Y &=&\left( 1-s^{2}\right) R(V,X)Y+s^{2}\left( R(V,X)Y\right)
^{\mathcal{V}}+s^{2}A_{X}A_{Y}V  \notag \\
R^{g_{s}}\left( X,Y\right) Z &=&\left( 1-s^{2}\right) R\left( X,Y\right)
Z+s^{2}\left( R\left( X,Y\right) Z\right) ^{\mathcal{V}}+s^{2}R^{B}\left(
X,Y\right) Z  \label{Detlef equation}
\end{eqnarray}

To eventually understand the curvature in a neighborhood of the
Gromoll-Meyer $0$-locus, we will need formulas for 
\begin{equation*}
R^{g_{s}}\left( W,X\right) X\text{ and }\left( R^{g_{s}}\left( X,W\right)
W\right) ^{\mathcal{H}}
\end{equation*}%
where $X$ is as above and $W$ is an arbitrary vector in $TM.$

Splitting $W$ into horizontal and vertical parts and applying the formulas
above we obtain the following.

\begin{lemma}
\label{Detlef}Let $X$ be a horizontal vector for $\pi $ and let $W$ be an
arbitrary vector in $TM.$ Then%
\begin{eqnarray*}
R^{g_{s}}\left( W,X\right) X &=&\left( 1-s^{2}\right) R(W,X)X+s^{2}\left(
R(W,X)X\right) ^{\mathcal{V}} \\
&&+s^{2}R^{B}\left( W^{\mathcal{H}},X\right) X+s^{2}A_{X}A_{X}W^{\mathcal{V}}
\\
\left( R^{g_{s}}\left( X,W\right) W\right) ^{\mathcal{H}} &=&\left(
1-s^{2}\right) \left( R\left( X,W\right) W\right) ^{\mathcal{H}} \\
&&+\left( 1-s^{2}\right) s^{2}A_{A_{X}W^{\mathcal{V}}}W^{\mathcal{V}%
}+s^{2}R^{B}\left( X,W^{\mathcal{H}}\right) W^{\mathcal{H}}
\end{eqnarray*}
\end{lemma}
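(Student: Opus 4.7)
The plan is to prove both identities by splitting $W = W^{\mathcal{H}} + W^{\mathcal{V}}$ and then reducing to the three formulas at \eqref{Detlef equation} one summand at a time. The only genuine subtlety is a cancellation in the cross terms of the second identity, which follows from the fact that the $A$-tensor of a horizontal vector interchanges the horizontal and vertical distributions.

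For the first identity I would write $R^{g_s}(W,X)X = R^{g_s}(W^{\mathcal{H}},X)X + R^{g_s}(W^{\mathcal{V}},X)X$. The first term is a curvature of three horizontal vectors, so the third formula in \eqref{Detlef equation} applies directly and produces $(1-s^2)R(W^{\mathcal{H}},X)X + s^2 (R(W^{\mathcal{H}},X)X)^{\mathcal{V}} + s^2 R^B(W^{\mathcal{H}},X)X$. For the second term I would apply the second formula in \eqref{Detlef equation} with $V = W^{\mathcal{V}}$ and $Y = X$, which gives $(1-s^2)R(W^{\mathcal{V}},X)X + s^2 (R(W^{\mathcal{V}},X)X)^{\mathcal{V}} + s^2 A_X A_X W^{\mathcal{V}}$. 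Adding the two expressions and using linearity of $R$ in the first slot recovers the claim.

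For the second identity I would bilinearly expand
\begin{equation*}
R^{g_s}(X,W)W = R^{g_s}(X,W^{\mathcal{H}})W^{\mathcal{H}} + R^{g_s}(X,W^{\mathcal{H}})W^{\mathcal{V}} + R^{g_s}(X,W^{\mathcal{V}})W^{\mathcal{H}} + R^{g_s}(X,W^{\mathcal{V}})W^{\mathcal{V}}.
\end{equation*}
The pure terms are immediate: the third formula in \eqref{Detlef equation} yields $(R^{g_s}(X,W^{\mathcal{H}})W^{\mathcal{H}})^{\mathcal{H}} = (1-s^2)(R(X,W^{\mathcal{H}})W^{\mathcal{H}})^{\mathcal{H}} + s^2 R^B(X,W^{\mathcal{H}})W^{\mathcal{H}}$, while the first formula in \eqref{Detlef equation} yields $(R^{g_s}(X,W^{\mathcal{V}})W^{\mathcal{V}})^{\mathcal{H}} = (1-s^2)(R(X,W^{\mathcal{V}})W^{\mathcal{V}})^{\mathcal{H}} + (1-s^2)s^2 A_{A_X W^{\mathcal{V}}} W^{\mathcal{V}}$.

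The step I expect to require the most care is the two mixed terms, for which none of the three formulas applies verbatim. For $R^{g_s}(X,W^{\mathcal{V}})W^{\mathcal{H}}$ I would invoke antisymmetry in the first pair and apply the second formula in \eqref{Detlef equation} with $V = W^{\mathcal{V}}$, $Y = W^{\mathcal{H}}$; this produces an $s^2$-order correction proportional to $A_X A_{W^{\mathcal{H}}} W^{\mathcal{V}}$, which is vertical since $A_{W^{\mathcal{H}}} W^{\mathcal{V}}$ is horizontal and $A_X$ sends horizontal vectors to vertical ones. For $R^{g_s}(X,W^{\mathcal{H}})W^{\mathcal{V}}$ I would test against an arbitrary horizontal $Z$ using the pair-swap symmetry $R^{g_s}(X,W^{\mathcal{H}},W^{\mathcal{V}},Z) = R^{g_s}(W^{\mathcal{V}},Z,X,W^{\mathcal{H}})$ and apply the second formula in \eqref{Detlef equation} on the right side; the resulting $s^2$ correction $\langle A_Z A_X W^{\mathcal{V}}, W^{\mathcal{H}}\rangle$ again vanishes because $A_Z A_X W^{\mathcal{V}}$ is vertical. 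Consequently both cross terms contribute only their $(1-s^2)$-scaled $g$-values to the horizontal part, and summing all four pieces and using linearity of $R$ in each slot delivers the stated formula.
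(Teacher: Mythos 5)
Your proposal is correct and follows exactly the route the paper takes: the paper simply says that the lemma follows by splitting $W$ into $W^{\mathcal{H}}+W^{\mathcal{V}}$ and applying the three formulas of \ref{Detlef equation}, which is what you carry out. Your careful handling of the mixed terms (using antisymmetry, the pair symmetry of $R^{g_{s}}$, and the fact that $A_{X}$ interchanges $\mathcal{H}$ and $\mathcal{V}$ so those $s^{2}$ corrections are vertical) is precisely the detail the paper leaves implicit, and it is sound.
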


\begin{remark}
Notice that the first curvature terms vanish in both formulas on the totally
geodesic flat tori.
\end{remark}

Using the fact that $\mathrm{curv}^{g_{0}}\left( X,W\right) =0$ and either
of the formulas for $R^{g_{s}}\left( W,X\right) X$ or $\left(
R^{g_{s}}\left( X,W\right) W\right) ^{\mathcal{H}}$ we have 
\begin{equation}
\mathrm{curv}^{g_{s}}\left( X,W\right) =s^{2}\mathrm{curv}^{B}\left( X,W^{%
\mathcal{H}}\right) -\left( 1-s^{2}\right) s^{2}\left\vert A_{X}W^{\mathcal{V%
}}\right\vert ^{2}.
\end{equation}%
Since $D\pi \left( W^{\mathcal{H}}\right) $ is a Jacobi field along $c,$ and
writing $W^{\mathcal{H}}$ for $D\pi \left( W^{\mathcal{H}}\right) $ we have 
\begin{eqnarray*}
\mathrm{curv}^{B}\left( X,W^{\mathcal{H}}\right) &=&-\left\langle \nabla
_{X}^{B}\nabla _{X}^{B}W^{\mathcal{H}},W^{\mathcal{H}}\right\rangle \\
&=&-D_{X}\left\langle \nabla _{X}^{B}W^{\mathcal{H}},W^{\mathcal{H}%
}\right\rangle +\left\langle \nabla _{X}^{B}W^{\mathcal{H}},\nabla
_{X}^{B}W^{\mathcal{H}}\right\rangle \\
&=&-\frac{1}{2}D_{X}D_{X}\left\langle W^{\mathcal{H}},W^{\mathcal{H}%
}\right\rangle +\left\langle \nabla _{X}^{B}W^{\mathcal{H}},\nabla
_{X}^{B}W^{\mathcal{H}}\right\rangle
\end{eqnarray*}%
Since $\nabla _{X}W\equiv 0$ we have 
\begin{eqnarray*}
0 &\equiv &\nabla _{X}W \\
&=&\nabla _{X}W^{\mathcal{H}}+\nabla _{X}W^{\mathcal{V}}.
\end{eqnarray*}%
The horizontal part of this equation gives us%
\begin{equation*}
A_{X}W^{\mathcal{V}}=-\left( \nabla _{X}W^{\mathcal{H}}\right) ^{\mathcal{H}%
}.
\end{equation*}%
Identifying$\ \left( \nabla _{X}W^{\mathcal{H}}\right) ^{\mathcal{H}}$ with $%
\nabla _{X}^{B}W^{\mathcal{H}}$ and substituting into the formula for $%
\mathrm{curv}^{B}\left( X,W^{\mathcal{H}}\right) $ we obtain%
\begin{equation*}
\mathrm{curv}^{B}\left( X,W^{\mathcal{H}}\right) =-\frac{1}{2}%
D_{X}D_{X}\left\vert W^{\mathcal{H}}\right\vert ^{2}+\left\vert A_{X}W^{%
\mathcal{V}}\right\vert ^{2}
\end{equation*}%
Substituting this into our formula for $\mathrm{curv}^{g_{s}}\left(
X,W\right) $ yields 
\begin{eqnarray*}
\mathrm{curv}^{g_{s}}\left( X,W\right) &=&-s^{2}\frac{1}{2}%
D_{X}D_{X}\left\vert W^{\mathcal{H}}\right\vert ^{2}+s^{2}\left\vert A_{X}W^{%
\mathcal{V}}\right\vert ^{2}-\left( 1-s^{2}\right) s^{2}\left\vert A_{X}W^{%
\mathcal{V}}\right\vert ^{2} \\
&=&-s^{2}\frac{1}{2}D_{X}D_{X}\left\vert W^{\mathcal{H}}\right\vert
^{2}+s^{4}\left\vert A_{X}W^{\mathcal{V}}\right\vert ^{2},
\end{eqnarray*}%
proving Theorem \ref{pos int}.

To help evaluate $A_{X}W^{\mathcal{V}}$, we add some general assumptions
about the Riemannian submersion $\pi :\left( M,g_{0}\right) \rightarrow B.$

\begin{itemize}
\item There is an isometric action by $G$ on $M$ that is by symmetries of $%
\pi .$

\item The intrinsic metrics on the principal orbits of $G$ in $B$ are
homotheties of each other.

\item The normal distribution to the orbits of $G$ on $B$ is integrable.
\end{itemize}

In addition we add some specific conditions to the hypotheses of Theorem \ref%
{pos int}:

\begin{itemize}
\item $W^{\mathcal{H}}$ is a Killing field for the $G$--action on $B.$

\item $D\pi \left( X\right) $ is invariant under the action that $G$ induces
on $B.$

\item $D\pi \left( X\right) $ is orthogonal to the orbits of $G.$
\end{itemize}

Since the normal distribution to the orbits of $G$ on $B$ is integrable we
can extend any normal vector $Z$ to a $G$--invariant normal field $Z$.
Writing $X$ for $D\pi \left( X\right) $ it then follows that all terms in
the Koszul formula for 
\begin{equation*}
\left\langle \nabla _{W^{\mathcal{H}}}^{B}X,Z\right\rangle
\end{equation*}%
vanish. In particular, $\nabla _{W^{\mathcal{H}}}^{B}X$ is tangent to the
orbits of $G.$

If $K$ is another Killing field for $G,$ then $X$ commutes with $K$ as well
as $W^{\mathcal{H}},$ thus $\left[ K,W^{\mathcal{H}}\right] $ is
perpendicular to $X$ as it is again a Killing field. Combining this with our
hypothesis that the intrinsic metrics on the principal orbits of $G$ in $B$
are homotheties of each other, we see from Koszul's formula that $\nabla
_{W^{\mathcal{H}}}X$ is proportional to $W^{\mathcal{H}}$ and can be
calculated by%
\begin{eqnarray*}
\left\langle \nabla _{W^{\mathcal{H}}}X,W^{\mathcal{H}}\right\rangle
&=&\left\langle \nabla _{X}W^{\mathcal{H}},W^{\mathcal{H}}\right\rangle \\
&=&\frac{1}{2}D_{X}\left\vert W^{\mathcal{H}}\right\vert ^{2} \\
&=&\left\vert W^{\mathcal{H}}\right\vert D_{X}\left\vert W^{\mathcal{H}%
}\right\vert ,\text{ so} \\
\nabla _{W^{\mathcal{H}}}X &=&\frac{D_{X}\left\vert W^{\mathcal{H}%
}\right\vert }{\left\vert W^{\mathcal{H}}\right\vert }W^{\mathcal{H}}.
\end{eqnarray*}%
Since 
\begin{equation*}
A_{X}W^{\mathcal{V}}=-\left( \nabla _{X}W^{\mathcal{H}}\right) ^{\mathcal{H}}
\end{equation*}%
we conclude

\begin{lemma}
\label{Abstract A--tensor} With the additional hypotheses mentioned above%
\begin{equation*}
A_{X}W^{\mathcal{V}}=-\frac{D_{X}\left\vert W^{\mathcal{H}}\right\vert }{%
\left\vert W^{\mathcal{H}}\right\vert }W^{\mathcal{H}}.
\end{equation*}
\end{lemma}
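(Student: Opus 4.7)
The plan is to observe that the work needed is essentially already done in the paragraphs preceding the lemma, and that the remaining step is a single application of the symmetry of the Levi--Civita connection. So I would structure the proof in three short moves.

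First, I recall the identification derived from the geodesic condition $\nabla_{X} W \equiv 0$, which gives $A_{X} W^{\mathcal{V}} = -(\nabla_{X} W^{\mathcal{H}})^{\mathcal{H}}$. Under the standard identification of horizontal vectors on $M$ along a horizontal curve with vectors on $B$, and using that $W^{\mathcal{H}}$ descends to a well-defined field on $B$ (it is a Killing field for the $G$-action there, by hypothesis), this becomes $A_{X} W^{\mathcal{V}} = -\nabla^{B}_{X} W^{\mathcal{H}}$. So the task reduces to computing $\nabla^{B}_{X} W^{\mathcal{H}}$ on $B$.

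Second, I would swap the order of differentiation. Since the Levi--Civita connection is torsion free, $\nabla^{B}_{X} W^{\mathcal{H}} - \nabla^{B}_{W^{\mathcal{H}}} X = [X, W^{\mathcal{H}}]$. The field $W^{\mathcal{H}}$ generates a one-parameter subgroup of $G$ acting by isometries on $B$, while $D\pi(X)$ is assumed to be $G$-invariant. Therefore the Lie derivative of $X$ along $W^{\mathcal{H}}$ vanishes, i.e.\ $[W^{\mathcal{H}}, X] = 0$, and hence $\nabla^{B}_{X} W^{\mathcal{H}} = \nabla^{B}_{W^{\mathcal{H}}} X$.

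Third, I invoke the computation carried out in the paragraph immediately preceding the lemma: the Koszul formula, together with the assumption that the principal $G$-orbits in $B$ carry mutually homothetic intrinsic metrics and that the normal distribution to them is integrable, forces $\nabla^{B}_{W^{\mathcal{H}}} X$ to be proportional to $W^{\mathcal{H}}$ with coefficient $D_{X}|W^{\mathcal{H}}|/|W^{\mathcal{H}}|$. Substituting this into the identity from the first step yields the claimed formula. The only real point requiring care, rather than an obstacle, is bookkeeping between $M$ and $B$, in particular that $W^{\mathcal{H}}$ is well defined as a field on $B$ and that $D\pi(X)$ is indeed $G$-invariant so that the bracket vanishes; both are immediate from the assumptions appended to Theorem \ref{pos int}.
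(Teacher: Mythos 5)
Your proposal is correct and follows essentially the same route as the paper: the identity $A_{X}W^{\mathcal{V}}=-\left( \nabla _{X}W^{\mathcal{H}}\right) ^{\mathcal{H}}$ from $\nabla _{X}W\equiv 0$, the interchange $\nabla _{X}^{B}W^{\mathcal{H}}=\nabla _{W^{\mathcal{H}}}^{B}X$ via $\left[ X,W^{\mathcal{H}}\right] =0$ (invariance of $D\pi \left( X\right) $ under the $G$--action), and the Koszul-formula computation showing $\nabla _{W^{\mathcal{H}}}^{B}X=\frac{D_{X}\left\vert W^{\mathcal{H}}\right\vert }{\left\vert W^{\mathcal{H}}\right\vert }W^{\mathcal{H}}$, which is exactly the derivation the paper carries out in the text immediately before the lemma. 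The only difference is presentational: you make the torsion-free swap explicit, while the paper uses it implicitly through $\left\langle \nabla _{W^{\mathcal{H}}}X,W^{\mathcal{H}}\right\rangle =\left\langle \nabla _{X}W^{\mathcal{H}},W^{\mathcal{H}}\right\rangle .$
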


Plugging this into our curvature formula we get 
\begin{equation}
\mathrm{curv}^{g_{s}}\left( X,W\right) =-s^{2}\frac{1}{2}D_{X}D_{X}\left%
\vert W^{\mathcal{H}}\right\vert ^{2}+s^{4}\left\vert D_{X}\left\vert W^{%
\mathcal{H}}\right\vert \right\vert ^{2}.  \label{curv after s}
\end{equation}%
As we've mentioned, to get positive curvature on the Gromoll-Meyer sphere,
we will have to understand certain other components of the $\left(
1,3\right) $ curvature tensor.

\begin{lemma}
\label{R^B ( H, X) X} Using $W^{\mathcal{H}}$ for $d\pi \left( W\right) $
and $X$ for $d\pi \left( X\right) $%
\begin{equation*}
R^{B}\left( W^{\mathcal{H}},X\right) X=-\left( \frac{D_{X}D_{X}\left\vert W^{%
\mathcal{H}}\right\vert }{\left\vert W^{\mathcal{H}}\right\vert }\right) W^{%
\mathcal{H}}
\end{equation*}
\end{lemma}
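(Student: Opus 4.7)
The plan is to recognize this as a straightforward consequence of the Jacobi equation on $B$, combined with the formula for $\nabla_{W^{\mathcal{H}}} X$ already derived just before Lemma~\ref{Abstract A--tensor}. Since $D\pi(W) = W^{\mathcal{H}}$ is a Jacobi field along the integral curves of the geodesic field $X = D\pi(X)$ on $B$, the Jacobi equation gives
\begin{equation*}
R^{B}(W^{\mathcal{H}},X)X = -\nabla^{B}_{X}\nabla^{B}_{X} W^{\mathcal{H}},
\end{equation*}
so the task reduces to evaluating the right-hand side.

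First I would use that $X$ and $W$ are commuting fields on $M$, so that $[X,W]=0$. Pushing forward by $\pi$ gives $[X,W^{\mathcal{H}}] = 0$ on $B$, and consequently
\begin{equation*}
\nabla^{B}_{X} W^{\mathcal{H}} = \nabla^{B}_{W^{\mathcal{H}}} X.
\end{equation*}
Under the additional hypotheses set up above Lemma~\ref{Abstract A--tensor}, the Koszul argument already carried out in the text yields
\begin{equation*}
\nabla^{B}_{W^{\mathcal{H}}} X \;=\; \frac{D_{X}|W^{\mathcal{H}}|}{|W^{\mathcal{H}}|}\, W^{\mathcal{H}}.
\end{equation*}

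Next I would differentiate once more along $X$. Writing $f = |W^{\mathcal{H}}|$ for brevity and using $\nabla^{B}_{X} W^{\mathcal{H}} = (D_{X}f/f)W^{\mathcal{H}}$, the product rule gives
\begin{equation*}
\nabla^{B}_{X}\nabla^{B}_{X} W^{\mathcal{H}} = D_{X}\!\left(\tfrac{D_{X}f}{f}\right) W^{\mathcal{H}} + \tfrac{D_{X}f}{f}\nabla^{B}_{X} W^{\mathcal{H}} = \left[\tfrac{D_{X}D_{X}f}{f} - \tfrac{(D_{X}f)^{2}}{f^{2}} + \tfrac{(D_{X}f)^{2}}{f^{2}}\right] W^{\mathcal{H}}.
\end{equation*}
The two $(D_{X}f)^{2}/f^{2}$ terms cancel, leaving $\nabla^{B}_{X}\nabla^{B}_{X} W^{\mathcal{H}} = (D_{X}D_{X}|W^{\mathcal{H}}|/|W^{\mathcal{H}}|)W^{\mathcal{H}}$, which combined with the Jacobi equation above is the claimed identity.

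There is no real obstacle here; the only subtlety is the bookkeeping between $M$ and $B$, namely confirming that the bracket computation descends (immediate from $\pi$ being a submersion) and that the formula for $\nabla_{W^{\mathcal{H}}} X$ derived in the preceding paragraphs is indeed the Levi-Civita connection of $B$ (it is, since the Koszul computation used only normal vectors to the $G$-orbits and the intrinsic orbit metrics, all downstairs on $B$). The pleasant cancellation $D_{X}(D_{X}f/f) + (D_{X}f/f)^{2} = D_{X}D_{X}f/f$ is what converts the logarithmic-derivative expression into the clean second-derivative form matching the Hessian appearing in Theorem~\ref{pos int}.
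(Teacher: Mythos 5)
Your proposal is correct and follows essentially the same route as the paper: both use the Jacobi equation $R^{B}(W^{\mathcal{H}},X)X=-\nabla_{X}\nabla_{X}W^{\mathcal{H}}$ together with the previously derived identity $\nabla_{X}W^{\mathcal{H}}=\nabla_{W^{\mathcal{H}}}X=\frac{D_{X}\left\vert W^{\mathcal{H}}\right\vert }{\left\vert W^{\mathcal{H}}\right\vert }W^{\mathcal{H}}$, then differentiate once more and observe the cancellation of the $\left( D_{X}\left\vert W^{\mathcal{H}}\right\vert /\left\vert W^{\mathcal{H}}\right\vert \right)^{2}$ terms. Your justification of $\nabla_{X}W^{\mathcal{H}}=\nabla_{W^{\mathcal{H}}}X$ via the bracket descending under $\pi$ is a harmless variant of the commutation argument already in the text.
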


\begin{proof}
Since $X$ is a geodesic field and $W^{\mathcal{H}}$ is a Jacobi field along
the integral curves of $X$%
\begin{equation*}
R^{B}\left( W^{\mathcal{H}},X\right) X=-\nabla _{X}\nabla _{X}W^{\mathcal{H}%
}.
\end{equation*}%
We discovered above that 
\begin{equation*}
\nabla _{X}W^{\mathcal{H}}=\nabla _{W^{\mathcal{H}}}X=\frac{D_{X}\left\vert
W^{\mathcal{H}}\right\vert }{\left\vert W^{\mathcal{H}}\right\vert }W^{%
\mathcal{H}}.
\end{equation*}%
Thus 
\begin{eqnarray*}
R^{B}\left( W^{\mathcal{H}},X\right) X &=&-\nabla _{X}\left( \frac{%
D_{X}\left\vert W^{\mathcal{H}}\right\vert }{\left\vert W^{\mathcal{H}%
}\right\vert }W^{\mathcal{H}}\right) \\
&=&-D_{X}\left( \frac{D_{X}\left\vert W^{\mathcal{H}}\right\vert }{%
\left\vert W^{\mathcal{H}}\right\vert }\right) W^{\mathcal{H}}-\left( \frac{%
D_{X}\left\vert W^{\mathcal{H}}\right\vert }{\left\vert W^{\mathcal{H}%
}\right\vert }\nabla _{X}W^{\mathcal{H}}\right) \\
&=&-\left( \frac{\left\vert W^{\mathcal{H}}\right\vert D_{X}D_{X}\left\vert
W^{\mathcal{H}}\right\vert -\left( D_{X}\left\vert W^{\mathcal{H}%
}\right\vert \right) ^{2}}{\left\vert W^{\mathcal{H}}\right\vert ^{2}}%
\right) W^{\mathcal{H}}-\left( \frac{D_{X}\left\vert W^{\mathcal{H}%
}\right\vert }{\left\vert W^{\mathcal{H}}\right\vert }\right) ^{2}W^{%
\mathcal{H}} \\
&=&-\left( \frac{D_{X}D_{X}\left\vert W^{\mathcal{H}}\right\vert }{%
\left\vert W^{\mathcal{H}}\right\vert }\right) W^{\mathcal{H}}.
\end{eqnarray*}
\end{proof}

\begin{lemma}
\label{R^B (X, H)H} Using $W^{\mathcal{H}}$ for $d\pi \left( W\right) $ and $%
X$ for $d\pi \left( X\right) $%
\begin{equation*}
R^{B}\left( X,W^{\mathcal{H}}\right) W^{\mathcal{H}}=-\left\vert W^{\mathcal{%
H}}\right\vert \nabla _{X}\left( \mathrm{grad}\left\vert W^{\mathcal{H}%
}\right\vert \right) .
\end{equation*}
\end{lemma}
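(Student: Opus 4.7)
The plan is to compute $R^{B}(X, W^{\mathcal{H}})W^{\mathcal{H}}$ directly from the definition
$$R^{B}(X, W^{\mathcal{H}})W^{\mathcal{H}} \;=\; \nabla_{X}\nabla_{W^{\mathcal{H}}} W^{\mathcal{H}} \;-\; \nabla_{W^{\mathcal{H}}}\nabla_{X} W^{\mathcal{H}} \;-\; \nabla_{[X, W^{\mathcal{H}}]} W^{\mathcal{H}},$$
and reduce each piece using the structural identities already assembled on $B$. The bracket term vanishes because $X$ and $W$ commute on $M$ by hypothesis of Theorem \ref{pos int}, so their $\pi$-projections commute on $B$.

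For the first term I would use the Killing-field identity on $W^{\mathcal{H}}$. Skew-adjointness of $\nabla W^{\mathcal{H}}$ combined with $\langle \nabla_{W^{\mathcal{H}}} W^{\mathcal{H}}, Z\rangle = -\langle \nabla_{Z} W^{\mathcal{H}}, W^{\mathcal{H}}\rangle = -\tfrac{1}{2} D_{Z}|W^{\mathcal{H}}|^{2}$ gives $\nabla_{W^{\mathcal{H}}} W^{\mathcal{H}} = -|W^{\mathcal{H}}|\,\mathrm{grad}|W^{\mathcal{H}}|$; differentiating by Leibniz along $X$ produces a piece $-(D_{X}|W^{\mathcal{H}}|)\,\mathrm{grad}|W^{\mathcal{H}}|$ plus the desired $-|W^{\mathcal{H}}|\,\nabla_{X}\mathrm{grad}|W^{\mathcal{H}}|$. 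For the middle term I would substitute the formula $\nabla_{X} W^{\mathcal{H}} = (D_{X}|W^{\mathcal{H}}|/|W^{\mathcal{H}}|)\,W^{\mathcal{H}}$ derived just before Lemma \ref{Abstract A--tensor} and again expand by Leibniz.

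The one step that is not mere bookkeeping is the vanishing
$$D_{W^{\mathcal{H}}}\!\left(\frac{D_{X}|W^{\mathcal{H}}|}{|W^{\mathcal{H}}|}\right) = 0.$$
I would justify it by $G$-invariance: the flow of the Killing field $W^{\mathcal{H}}$ consists of isometries, these isometries preserve $X$ since $D\pi(X)$ is $G$-invariant by hypothesis, and therefore they preserve both $|W^{\mathcal{H}}|$ and $D_{X}|W^{\mathcal{H}}|$; hence the ratio is constant along the orbits of $W^{\mathcal{H}}$. Granting this, the only contribution surviving in $\nabla_{W^{\mathcal{H}}}\nabla_{X} W^{\mathcal{H}}$ is $(D_{X}|W^{\mathcal{H}}|/|W^{\mathcal{H}}|)\,\nabla_{W^{\mathcal{H}}} W^{\mathcal{H}} = -(D_{X}|W^{\mathcal{H}}|)\,\mathrm{grad}|W^{\mathcal{H}}|$, which cancels exactly the analogous term produced by Leibniz on the first piece. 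What remains is the stated formula. This $G$-invariance cancellation is really the only delicate point; the rest is routine covariant-derivative bookkeeping, and the result is the natural companion to Lemma \ref{R^B ( H, X) X}, obtained by trading one differentiation of $|W^{\mathcal{H}}|$ for $\nabla_{X}\mathrm{grad}$.
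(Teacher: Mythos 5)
Your proposal is correct and follows essentially the same route as the paper: both use $R^{B}(X,W^{\mathcal{H}})W^{\mathcal{H}}=\nabla _{X}\nabla _{W^{\mathcal{H}}}W^{\mathcal{H}}-\nabla _{W^{\mathcal{H}}}\nabla _{X}W^{\mathcal{H}}$ together with the Killing-field identity $\nabla _{W^{\mathcal{H}}}W^{\mathcal{H}}=-\left\vert W^{\mathcal{H}}\right\vert \mathrm{grad}\left\vert W^{\mathcal{H}}\right\vert $ and the earlier formula $\nabla _{X}W^{\mathcal{H}}=\frac{D_{X}\left\vert W^{\mathcal{H}}\right\vert }{\left\vert W^{\mathcal{H}}\right\vert }W^{\mathcal{H}}$, with the same cancellation of the $\left( D_{X}\left\vert W^{\mathcal{H}}\right\vert \right) \mathrm{grad}\left\vert W^{\mathcal{H}}\right\vert $ terms. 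In fact you are slightly more careful than the paper, which silently omits both the bracket term and the derivative $D_{W^{\mathcal{H}}}\bigl( D_{X}\left\vert W^{\mathcal{H}}\right\vert /\left\vert W^{\mathcal{H}}\right\vert \bigr) $ that you justify by $G$--invariance.
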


\begin{proof}
Let $Z$ be any vector field. Using that $W^{\mathcal{H}}$ is a Killing field
we get 
\begin{eqnarray*}
\left\langle \nabla _{W^{\mathcal{H}}}W^{\mathcal{H}},Z\right\rangle
&=&-\left\langle \nabla _{Z}W^{\mathcal{H}},W^{\mathcal{H}}\right\rangle \\
&=&-\frac{1}{2}D_{Z}\left\langle W^{\mathcal{H}},W^{\mathcal{H}}\right\rangle
\\
&=&-\frac{1}{2}D_{Z}\left\vert W^{\mathcal{H}}\right\vert ^{2} \\
&=&-\left\vert W^{\mathcal{H}}\right\vert D_{Z}\left\vert W^{\mathcal{H}%
}\right\vert \\
&=&-\left\langle \left\vert W^{\mathcal{H}}\right\vert \mathrm{grad}%
\left\vert W^{\mathcal{H}}\right\vert ,Z\right\rangle
\end{eqnarray*}%
showing that 
\begin{equation*}
\nabla _{W^{\mathcal{H}}}W^{\mathcal{H}}=-\left\vert W^{\mathcal{H}%
}\right\vert \mathrm{grad}\left\vert W^{\mathcal{H}}\right\vert .
\end{equation*}%
Thus%
\begin{eqnarray*}
R^{B}\left( X,W^{\mathcal{H}}\right) W^{\mathcal{H}} &=&\nabla _{X}\nabla
_{W^{\mathcal{H}}}W^{\mathcal{H}}-\nabla _{W^{\mathcal{H}}}\nabla _{X}W^{%
\mathcal{H}} \\
&=&-\nabla _{X}\left( \left\vert W^{\mathcal{H}}\right\vert \mathrm{grad}%
\left\vert W^{\mathcal{H}}\right\vert \right) -\nabla _{W^{\mathcal{H}%
}}\left( \frac{D_{X}\left\vert W^{\mathcal{H}}\right\vert }{\left\vert W^{%
\mathcal{H}}\right\vert }W^{\mathcal{H}}\right) \\
&=&-\left( D_{X}\left\vert W^{\mathcal{H}}\right\vert \right) \mathrm{grad}%
\left\vert W^{\mathcal{H}}\right\vert -\left( \left\vert W^{\mathcal{H}%
}\right\vert \nabla _{X}\mathrm{grad}\left\vert W^{\mathcal{H}}\right\vert
\right) -\frac{D_{X}\left\vert W^{\mathcal{H}}\right\vert }{\left\vert W^{%
\mathcal{H}}\right\vert }\nabla _{W^{\mathcal{H}}}W^{\mathcal{H}} \\
&=&-\left( D_{X}\left\vert W^{\mathcal{H}}\right\vert \right) \mathrm{grad}%
\left\vert W^{\mathcal{H}}\right\vert -\left( \left\vert W^{\mathcal{H}%
}\right\vert \nabla _{X}\mathrm{grad}\left\vert W^{\mathcal{H}}\right\vert
\right) +\frac{D_{X}\left\vert W^{\mathcal{H}}\right\vert }{\left\vert W^{%
\mathcal{H}}\right\vert }\left\vert W^{\mathcal{H}}\right\vert \mathrm{grad}%
\left\vert W^{\mathcal{H}}\right\vert \\
&=&-\left( \left\vert W^{\mathcal{H}}\right\vert \nabla _{X}\mathrm{grad}%
\left\vert W^{\mathcal{H}}\right\vert \right)
\end{eqnarray*}
\end{proof}

Combining the calculations above we have

\begin{lemma}
\label{Abstract (1,3) tensors} Let $X$ and $W$ be as in Theorem \ref{pos int}%
. Then 
\begin{eqnarray*}
R^{g_{s}}\left( W,X\right) X &=&-s^{2}\left( \frac{D_{X}D_{X}\left\vert W^{%
\mathcal{H}}\right\vert }{\left\vert W^{\mathcal{H}}\right\vert }\right) W^{%
\mathcal{H}}-s^{2}\frac{D_{X}\left\vert W^{\mathcal{H}}\right\vert }{%
\left\vert W^{\mathcal{H}}\right\vert }A_{X}W^{\mathcal{H}} \\
\left( R^{g_{s}}\left( X,W\right) W\right) ^{\mathcal{H}} &=&-\left(
1-s^{2}\right) s^{2}\frac{D_{X}\left\vert W^{\mathcal{H}}\right\vert }{%
\left\vert W^{\mathcal{H}}\right\vert }A_{W^{\mathcal{H}}}W^{\mathcal{V}%
}-s^{2}\left\vert W^{\mathcal{H}}\right\vert \nabla _{X}\left( \mathrm{grad}%
\left\vert W^{\mathcal{H}}\right\vert \right) .
\end{eqnarray*}
\end{lemma}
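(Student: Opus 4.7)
The plan is to obtain both identities by specializing Lemma~\ref{Detlef} to the totally geodesic flat torus $T$ and then substituting the three computations already established in this subsection, namely Lemmas~\ref{Abstract A--tensor}, \ref{R^B ( H, X) X}, and~\ref{R^B (X, H)H}. Because $T$ is totally geodesic and flat with $X,W$ tangent to $T$, one has $R(W,X)X=0$ and $R(X,W)W=0$; this is exactly the observation recorded in the remark after Lemma~\ref{Detlef}. In particular $(R(W,X)X)^{\mathcal{V}}=0$ as well, so the two Detlef formulas collapse to
\begin{eqnarray*}
R^{g_s}(W,X)X &=& s^{2} R^{B}(W^{\mathcal{H}},X)X + s^{2} A_X A_X W^{\mathcal{V}}, \\
(R^{g_s}(X,W)W)^{\mathcal{H}} &=& (1-s^{2}) s^{2} A_{A_X W^{\mathcal{V}}} W^{\mathcal{V}} + s^{2} R^{B}(X,W^{\mathcal{H}}) W^{\mathcal{H}}.
\end{eqnarray*}

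Next, I would substitute Lemma~\ref{R^B ( H, X) X} into the first line and Lemma~\ref{R^B (X, H)H} into the second line, producing the two ``leading'' terms $-s^{2}\bigl(D_X D_X|W^{\mathcal{H}}|/|W^{\mathcal{H}}|\bigr)W^{\mathcal{H}}$ and $-s^{2}|W^{\mathcal{H}}|\nabla_X(\mathrm{grad}|W^{\mathcal{H}}|)$ that appear in the target formulas. The only items left to eliminate are the two $A$-tensor expressions $A_X A_X W^{\mathcal{V}}$ and $A_{A_X W^{\mathcal{V}}} W^{\mathcal{V}}$.

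For those I would feed in Lemma~\ref{Abstract A--tensor}, which says $A_X W^{\mathcal{V}}=-(D_X|W^{\mathcal{H}}|/|W^{\mathcal{H}}|)\,W^{\mathcal{H}}$. Since $A$ is a tensor field, it is $C^\infty(M)$-linear in both its subscript and its argument, so the scalar factor pulls out cleanly in both directions, giving
\[
A_X A_X W^{\mathcal{V}} = -\frac{D_X|W^{\mathcal{H}}|}{|W^{\mathcal{H}}|}\,A_X W^{\mathcal{H}}, \qquad A_{A_X W^{\mathcal{V}}} W^{\mathcal{V}} = -\frac{D_X|W^{\mathcal{H}}|}{|W^{\mathcal{H}}|}\,A_{W^{\mathcal{H}}} W^{\mathcal{V}}.
\]
Assembling yields exactly the two displayed identities of the lemma.

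I do not expect any real obstacle: once Lemmas~\ref{Detlef}, \ref{Abstract A--tensor}, \ref{R^B ( H, X) X}, and~\ref{R^B (X, H)H} are in hand, the remaining work is purely tensorial bookkeeping. The single point requiring care is tracking which slot of the $A$-tensor is the subscript and which is the argument in each of the two compositions, so that the scalar factor $-D_X|W^{\mathcal{H}}|/|W^{\mathcal{H}}|$ is extracted with the correct sign and the surviving $A$-term ($A_X W^{\mathcal{H}}$ in the first formula, $A_{W^{\mathcal{H}}} W^{\mathcal{V}}$ in the second) matches the statement.
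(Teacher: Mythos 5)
Your proposal is correct and is precisely the paper's argument: the lemma is stated there with the words \emph{combining the calculations above}, i.e.\ substitute Lemmas \ref{Abstract A--tensor}, \ref{R^B ( H, X) X}, and \ref{R^B (X, H)H} into Lemma \ref{Detlef} after using $R(W,X)X=0$ and $\left(R(X,W)W\right)^{\mathcal{H}}=0$ on the totally geodesic flat. Your handling of the tensoriality of $A$ in both slots, which extracts the factor $-D_{X}\left\vert W^{\mathcal{H}}\right\vert /\left\vert W^{\mathcal{H}}\right\vert$ with the correct sign, is exactly the bookkeeping the paper leaves implicit.
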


\begin{remark}
The two $A$--tensors $A_{X}W^{\mathcal{H}}$ and $A_{W^{\mathcal{H}}}W^{%
\mathcal{V}}$ involve derivatives of vectors that are not tangent or normal
to the totally geodesic tori. They cannot be determined abstractly, and are
in fact dependent on the particular geometry. We give estimates for them in
the case of the Gromoll-Meyer sphere in the section of \cite{PetWilh2}
called \textquotedblleft Concrete $A$--tensor estimates\textquotedblright .
\end{remark}

\subsection{Positive Curvature on a Single Initially Flat Torus}

In this subsection we will explain how our fiber scaling calculations can be
combined with a conformal change to put positive curvature on a \emph{single 
}flat torus, $T,$ that satisfies the hypotheses of the previous section as
well as a few other mild hypotheses. This fact may seem reassuring, however,
we emphasize that for the following reasons it is not sufficient to get
positive curvature on the Gromoll--Meyer sphere.

\begin{itemize}
\item We will not learn (much) about the curvatures of nearby planes,

\item The Gromoll--Meyer sphere with the metric of \cite{Wilh} has many
totally geodesic flat tori. For reasons that we shall make explicit in the
next section, fiber scaling combined with a conformal change can not be used
to put positive curvature on all of these tori simultaneously.
\end{itemize}

In Section 4 we discuss an abstract situation that allows for a certain type
of partial conformal change to affect certain curvatures in the same way as
an actual conformal change. By combining the results of that section and
this one we will have a method that puts positive curvature on all of the
totally geodesic flats of the Gromoll--Meyer sphere simultaneously, modulo
the question of verifying that the Gromoll--Meyer sphere satisfies all of
the necessary hypotheses. This last question is resolved in \cite{PetWilh2},
as well as the issue of actually verifying positive curvature.

Imagine that $T\subset M$ is a totally geodesic flat torus spanned by
geodesic fields $X$ and $W$ satisfying all of the hypotheses of the previous
section. Let 
\begin{equation*}
\tilde{T}:\left[ 0,\pi \right] \times \left[ 0,l\right] \longrightarrow M
\end{equation*}%
be a parameterization of $T$ with $X$ a unit field whose integral curves, $%
c_{s_{0}},$ are 
\begin{equation*}
c_{s_{0}}:t\longmapsto \tilde{T}\left( t,s_{0}\right) .
\end{equation*}%
In particular, the integral curves of $X$ are periodic with minimal period $%
\pi .$

We also assume that along each $c_{s_{0}}$ the key function $\left\vert W^{%
\mathcal{H}}\right\vert $

\begin{itemize}
\item is periodic in the first variable with period $\frac{\pi }{2},$ i.e. $%
\left\vert W^{\mathcal{H}}\right\vert _{\tilde{T}\left( t,s_{0}\right)
}=\left\vert W^{\mathcal{H}}\right\vert _{\tilde{T}\left( t+\frac{\pi }{2}%
,s_{0}\right) },$

\item has zeros only when $t$ is $0,\frac{\pi }{2},$ $\pi ,\ldots $ ,

\item and maxima only when $t$ is $\frac{\pi }{4},\frac{3\pi }{4},\frac{5\pi 
}{4},\ldots $.
\end{itemize}

We also assume that

\begin{equation*}
\mathrm{dist}\left( \tilde{T}\left( \left\{ 0\right\} \times \left[ 0,l%
\right] \right) ,\cdot \right)
\end{equation*}%
is smooth on $\tilde{T}\left( \left( 0,\frac{\pi }{4}\right) \times \left[
0,l\right] \right) $ with gradient $X.$

To simplify notation we set 
\begin{equation*}
\psi =\left\vert W^{\mathcal{H}}\right\vert .
\end{equation*}%
So after scaling the fibers of $\pi $ by $\sqrt{1-s^{2}}$ we have from \ref%
{curv after s} 
\begin{equation}
\mathrm{curv}_{g_{s}}\left( X,W\right) =-s^{2}\left( D_{X}\left( \psi
D_{X}\psi \right) \right) +s^{4}\left( D_{X}\psi \right) ^{2}.
\label{fiber scale psi tilde}
\end{equation}

We remind the reader that after the conformal change $\tilde{g}=e^{2f}g_{s}$
we will have 
\begin{eqnarray*}
e^{-2f}\mathrm{curv}_{\tilde{g}}\left( X,W\right) &=&\mathrm{curv}%
_{g_{s}}\left( X,W\right) -\left\vert W\right\vert _{g_{s}}^{2}\mathrm{Hess}%
f\left( X,X\right) -\mathrm{Hess}f\left( W,W\right) \\
&&+\left( D_{X}f\right) ^{2}\left\vert W\right\vert _{g_{s}}^{2}-\left\vert 
\mathrm{grad}f\right\vert ^{2}\left\vert W\right\vert _{g_{s}}^{2},
\end{eqnarray*}%
provided $X$ is unit and $W$ is perpendicular to $\mathrm{grad}f$ (cf \cite%
{Pet} Exercise 3.5)

Our choice of conformal factor will look like 
\begin{equation*}
f=-\frac{s^{2}}{2\left( 1-s^{2}\right) }\frac{\psi ^{2}}{\left\vert
W\right\vert ^{2}}+\text{\textrm{\ }a much smaller term.}
\end{equation*}%
The first conformal term $-\left\vert W\right\vert _{g_{s}}^{2}\mathrm{Hess}%
f\left( X,X\right) $ will nearly cancel with the leading term $-s^{2}\left(
D_{X}\left( \psi D_{X}\psi \right) \right) $ in $\mathrm{curv}_{g_{s}}\left(
X,W\right) .$ For our initial metric $\nabla _{W}W\equiv 0,$ so $\mathrm{Hess%
}f\left( W,W\right) $ has order $s^{4},$ as do the other two conformal
terms, $\left( D_{X}f\right) ^{2}\left\vert W\right\vert _{g_{s}}^{2}$ and $%
\left\vert \nabla f\right\vert ^{2}\left\vert W\right\vert _{g_{s}}^{2}.$ In
the remainder of this section we will see more precisely what these terms
actually are.

To do this we name the \textquotedblleft much smaller term\textquotedblright
, $E.$ The function $E$ has the form 
\begin{equation*}
E=s^{4}I\circ \mathrm{dist}\left( \tilde{T}\left( \left\{ 0\right\} \times %
\left[ 0,l\right] \right) ,\cdot \right)
\end{equation*}%
where $I:\mathbb{R}\longrightarrow \mathbb{R}$ is a function that satisfies 
\begin{equation*}
I^{\prime }\left( 0\right) =I^{\prime }\left( \frac{\pi }{4}\right) =0,
\end{equation*}

Thus 
\begin{equation*}
\mathrm{grad}\text{ }f=-\frac{s^{2}}{\left( 1-s^{2}\right) \left\vert
W\right\vert ^{2}}\psi \mathrm{grad}\psi +s^{4}I^{\prime }X
\end{equation*}

To understand the effect that this conformal change has on our curvatures we
will need to know the Hessian of $f$, and hence a covariant derivative that
we have yet to compute.

\begin{proposition}
\label{Nabla_W W}%
\begin{equation*}
\nabla _{W}^{g_{s}}W=-s^{2}\psi \mathrm{grad}\psi ,
\end{equation*}
\end{proposition}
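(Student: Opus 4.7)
The plan is to relate $\nabla^{g_s}_W W$ to $\nabla^{g_0}_W W = 0$ via the Koszul formula, then simplify using the submersion geometry. Writing $g_s = g_0 - s^2 g_0^{\mathcal{V}}$ with $g_0^{\mathcal{V}}(A,B) \equiv g_0(A^{\mathcal{V}}, B^{\mathcal{V}})$, subtracting the Koszul identities for the two connections applied to the triple $(W, W, Z)$, and using $\nabla^{g_0}_W W = 0$, one obtains
\[
2 g_s(\nabla^{g_s}_W W, Z) = s^2\left[-2 W g_0(W^{\mathcal{V}}, Z^{\mathcal{V}}) + Z|W^{\mathcal{V}}|^2 + 2 g_0([W,Z]^{\mathcal{V}}, W^{\mathcal{V}})\right]
\]
for every test field $Z$.

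Because $\pi$ is an isometry on horizontal vectors, $\psi = |W^{\mathcal{H}}|$ is pulled back from $B$, so $\mathrm{grad}\,\psi$ is horizontal and the same for $g_s$ as for $g_0$. Hence $-2 s^2 \psi\, g_s(\mathrm{grad}\,\psi, Z) = -s^2 Z|W^{\mathcal{H}}|^2$, and the proposition becomes equivalent to the scalar identity
\[
Z|W|^2 = 2 W g_0(W^{\mathcal{V}}, Z^{\mathcal{V}}) - 2 g_0([W,Z]^{\mathcal{V}}, W^{\mathcal{V}}).
\]
Applying Koszul once more to $\nabla^{g_0}_W W = 0$ gives $Z|W|^2 = 2 W g_0(W, Z) - 2 g_0([W, Z], W)$, so subtracting reduces the problem to
\[
W g_0(W^{\mathcal{H}}, Z^{\mathcal{H}}) = g_0([W,Z]^{\mathcal{H}}, W^{\mathcal{H}}).
\]

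I would verify this last identity by extending $Z$ locally so that $Z^{\mathcal{H}}$ is a horizontal lift of a field on $B$. Then $g_0(W^{\mathcal{H}}, Z^{\mathcal{H}})$ is pulled back from $B$, so $W^{\mathcal{V}}$ annihilates it and the left-hand side collapses to $W^{\mathcal{H}} g_0(W^{\mathcal{H}}, Z^{\mathcal{H}})$; on the right, $[W^{\mathcal{V}}, Z^{\mathcal{H}}]$ is vertical (since $d\pi$ sends it to $[0, d\pi Z^{\mathcal{H}}] = 0$), so it contributes nothing to $[W, Z]^{\mathcal{H}}$. What remains is precisely the Koszul expansion of $\nabla^{g_0}_{W^{\mathcal{H}}} W^{\mathcal{H}} = -\psi\,\mathrm{grad}\,\psi$, a relation that lifts to $M$ from the Killing-type computation in the proof of Lemma \ref{R^B (X, H)H} (using $A_{W^{\mathcal{H}}} W^{\mathcal{H}} = 0$ for the vertical part).

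The main bookkeeping obstacle is tracking the horizontal and vertical decompositions of the brackets $[W, Z]$ in the two Koszul identities; once the standard submersion facts are invoked (brackets of vertical fields are vertical; brackets of a vertical field with a horizontal lift are vertical), the algebra collapses to the Killing identity for $W^{\mathcal{H}}$ already in hand.
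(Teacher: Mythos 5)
Your argument is correct, and it reaches the result by a route whose bookkeeping differs from the paper's even though the essential inputs coincide. The paper splits $\nabla _{W}^{g_{s}}W$ into the four mixed terms $\nabla _{W^{\mathcal{V}}}^{g_{s}}W^{\mathcal{V}},\ldots ,\nabla _{W^{\mathcal{H}}}^{g_{s}}W^{\mathcal{H}}$ and quotes the canonical--variation formulas (vertical parts unchanged, horizontal parts weighted by $1-s^{2}$); summing and using $\nabla _{W}W=0$ together with $\left( \nabla _{W}W\right) ^{\mathcal{H}}=0$ collapses everything to $s^{2}\left( \nabla _{W^{\mathcal{H}}}W^{\mathcal{H}}\right) ^{\mathcal{H}}$, which is then evaluated by the Killing identity $\nabla _{W^{\mathcal{H}}}W^{\mathcal{H}}=-\psi \,\mathrm{grad}\,\psi $ from the proof of Lemma \ref{R^B (X, H)H}. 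You instead write $g_{s}=g_{0}-s^{2}g_{0}^{\mathcal{V}}$, subtract the two Koszul identities (your displayed difference formula is right), and reduce, via $\nabla _{W}^{g_{0}}W=0$, to the scalar identity $Wg_{0}\left( W^{\mathcal{H}},Z^{\mathcal{H}}\right) =g_{0}\left( \left[ W,Z\right] ^{\mathcal{H}},W^{\mathcal{H}}\right) $, verified with basic/vertical bracket facts and the same Killing computation; so both proofs ultimately rest on $\nabla _{W}W=0$ plus Lemma \ref{R^B (X, H)H}. What your route buys is self-containedness -- no appeal to the canonical--variation connection formulas -- at the price of two points you should make explicit: (i) you need $W^{\mathcal{H}}$ to be basic (projectable), which is implicit in the section's hypotheses since $D\pi \left( W\right) $ is treated as a field on $B$; this is what makes $\psi $ and $g_{0}\left( W^{\mathcal{H}},Z^{\mathcal{H}}\right) $ pullbacks from $B$ and makes $\left[ W^{\mathcal{V}},Z^{\mathcal{H}}\right] ,$ $\left[ W^{\mathcal{H}},Z^{\mathcal{V}}\right] ,$ $\left[ W^{\mathcal{V}},Z^{\mathcal{V}}\right] $ all vertical; (ii) choosing the extension of $Z$ so that $Z^{\mathcal{H}}$ is a horizontal lift is legitimate because $g_{s}\left( \nabla _{W}^{g_{s}}W,Z\right) $ at a point depends only on the value of $Z$ there, so you may compute with whatever extension is convenient. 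With those remarks added, your proof is a valid alternative to the one in the paper.
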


\begin{proof}
Before the fiber scaling $\nabla _{W}W=0.$ Breaking $W$ into horizontal and
vertical parts and using the Koszul formula we get 
\begin{eqnarray*}
\nabla _{W}^{g_{s}}W &=&\nabla _{W^{\mathcal{V}}}^{g_{s}}W^{\mathcal{V}%
}+\nabla _{W^{\mathcal{V}}}^{g_{s}}W^{\mathcal{H}}+\nabla _{W^{\mathcal{H}%
}}^{g_{s}}W^{\mathcal{V}}+\nabla _{W^{\mathcal{H}}}^{g_{s}}W^{\mathcal{H}} \\
&=&\left( \nabla _{W^{\mathcal{V}}}W^{\mathcal{V}}\right) ^{\mathcal{V}%
}+\left( 1-s^{2}\right) \left( \nabla _{W^{\mathcal{V}}}W^{\mathcal{V}%
}\right) ^{\mathcal{H}} \\
&&+\left( \nabla _{W^{\mathcal{V}}}W^{\mathcal{H}}\right) ^{\mathcal{V}%
}+\left( 1-s^{2}\right) \left( \nabla _{W^{\mathcal{V}}}W^{\mathcal{H}%
}\right) ^{\mathcal{H}} \\
&&+\left( \nabla _{W^{\mathcal{H}}}W^{\mathcal{V}}\right) ^{\mathcal{V}%
}+\left( 1-s^{2}\right) \left( \nabla _{W^{\mathcal{H}}}W^{\mathcal{V}%
}\right) ^{\mathcal{H}} \\
&&+\nabla _{W^{\mathcal{H}}}W^{\mathcal{H}}
\end{eqnarray*}%
Rearranging terms and using the fact that $\nabla _{W}W=0$ yields 
\begin{equation*}
\nabla _{W}^{g_{s}}W=-s^{2}\left[ \left( \nabla _{W^{\mathcal{V}}}W^{%
\mathcal{V}}\right) +\nabla _{W^{\mathcal{V}}}W^{\mathcal{H}}+\nabla _{W^{%
\mathcal{H}}}W^{\mathcal{V}}\right] ^{\mathcal{H}}.
\end{equation*}%
We also have $\left( \nabla _{W}W\right) ^{\mathcal{H}}=0,$ so 
\begin{equation*}
\left[ \nabla _{W^{\mathcal{V}}}W^{\mathcal{V}}+\nabla _{W^{\mathcal{V}}}W^{%
\mathcal{H}}+\nabla _{W^{\mathcal{H}}}W^{\mathcal{V}}+\nabla _{W^{\mathcal{H}%
}}W^{\mathcal{H}}\right] ^{\mathcal{H}}=0.
\end{equation*}

Thus 
\begin{eqnarray*}
\nabla _{W}^{g_{s}}W &=&-s^{2}\left[ \left( \nabla _{W^{\mathcal{V}}}W^{%
\mathcal{V}}\right) +\nabla _{W^{\mathcal{V}}}W^{\mathcal{H}}+\nabla _{W^{%
\mathcal{H}}}W^{\mathcal{V}}\right] ^{\mathcal{H}} \\
&=&s^{2}\nabla _{W^{\mathcal{H}}}W^{\mathcal{H}} \\
&=&-s^{2}\left\vert W^{\mathcal{H}}\right\vert \mathrm{grad}\left\vert W^{%
\mathcal{H}}\right\vert \\
&=&-s^{2}\psi \mathrm{grad}\psi ,
\end{eqnarray*}%
where we have used an equation in the proof of Lemma \ref{R^B (X, H)H} for
the next to last inequality.
\end{proof}

\begin{proposition}
\label{Hessians copy(1)}%
\begin{equation*}
\mathrm{Hess}f\left( X,X\right) =-\frac{s^{2}}{\left( 1-s^{2}\right)
\left\vert W\right\vert ^{2}}D_{X}\left( \psi D_{X}\psi \right)
+s^{4}I^{\prime \prime }
\end{equation*}%
\begin{equation*}
\mathrm{Hess}f\left( W,W\right) =-\frac{s^{4}}{\left( 1-s^{2}\right)
\left\vert W\right\vert ^{2}}\psi ^{2}\left\vert \mathrm{grad\,}\psi
\right\vert ^{2}+O\left( s^{6}\right)
\end{equation*}
\end{proposition}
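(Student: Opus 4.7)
The plan is to compute both Hessians via $\mathrm{Hess}\,f(Y,Y) = \langle \nabla^{g_{s}}_{Y}\mathrm{grad}^{g_{s}}f,\, Y\rangle_{g_{s}}$, making use of the expression
\begin{equation*}
\mathrm{grad}\,f \;=\; -\frac{s^{2}}{(1-s^{2})|W|^{2}}\,\psi\,\mathrm{grad}\,\psi \;+\; s^{4} I'\, X
\end{equation*}
already derived just above the proposition, together with Proposition \ref{Nabla_W W} and the ambient geometric facts about $\psi$ and $X$. Throughout, I treat $|W|^{2}=|W|^{2}_{g_{0}}$ as a constant on the torus, which is justified because the torus is flat and totally geodesic with $W$ a geodesic field there.

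For $\mathrm{Hess}\,f(X,X)$, I would first note that $X$ remains a geodesic field after the canonical variation: $X$ is horizontal and $A_{X}X=0$, and the Koszul formula for $g_{s}$ applied to $(X,X)$ differs from the one for $g_{0}$ only in vertical-valued terms that already vanish, so $\nabla^{g_{s}}_{X}X=\nabla^{g_{0}}_{X}X=0$. Thus $\mathrm{Hess}\,f(X,X)=D_{X}D_{X}f$. Differentiating the main piece of $f$ twice along $X$ yields $-\frac{s^{2}}{(1-s^{2})|W|^{2}}D_{X}(\psi D_{X}\psi)$, while the assumption that $\mathrm{grad}\,r=X$ gives $D_{X}(I\circ r)=I'$ and $D_{X}D_{X}(I\circ r)=I''$, producing the claimed $s^{4}I''$.

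For $\mathrm{Hess}\,f(W,W)$ I would expand $\langle\nabla^{g_{s}}_{W}\mathrm{grad}\,f, W\rangle$ summand by summand. The main summand evaluates to
\begin{equation*}
-\frac{s^{2}}{(1-s^{2})|W|^{2}}\Bigl[(D_{W}\psi)^{2} + \psi\,\mathrm{Hess}^{g_{s}}\psi(W,W)\Bigr].
\end{equation*}
The crucial observation is that $\psi=|W^{\mathcal{H}}|$ is constant along fibres of $\pi$ (horizontal lengths are fibre-independent) and is $G$-invariant (since $W^{\mathcal{H}}$ comes from a Killing field of $G$ on $B$), so $D_{W^{\mathcal{H}}}\psi = D_{W^{\mathcal{V}}}\psi = 0$ in a neighbourhood of the torus, and hence $D_{W}\psi\equiv 0$ and $D_{W}D_{W}\psi\equiv 0$. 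Feeding Proposition \ref{Nabla_W W} into $\mathrm{Hess}^{g_{s}}\psi(W,W)=D_{W}D_{W}\psi-(\nabla^{g_{s}}_{W}W)\psi$ yields $\mathrm{Hess}^{g_{s}}\psi(W,W)=s^{2}\psi|\mathrm{grad}\,\psi|^{2}$, so the main summand collapses to $-\frac{s^{4}}{(1-s^{2})|W|^{2}}\psi^{2}|\mathrm{grad}\,\psi|^{2}$.

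It remains to show that the $s^{4}I'X$ summand contributes only $O(s^{6})$. On the torus $\langle X, W\rangle_{g_{s}}=0$ (since $X$ is horizontal, $X\perp W$ in $g_{0}$, and $g_{s}$ agrees with $g_{0}$ on horizontal vectors), while $[X,W]=0$ gives $\langle\nabla^{g_{s}}_{W}X, W\rangle_{g_{s}}=\tfrac{1}{2}D_{X}|W|^{2}_{g_{s}}$; the identity $|W|^{2}_{g_{s}}=(1-s^{2})|W|^{2}+s^{2}\psi^{2}$ shows this $X$-derivative is $O(s^{2})$, producing an overall $O(s^{6})$ contribution. The main obstacle is the careful verification that $D_{W}\psi$ vanishes identically in a neighbourhood of the torus (and not merely on it), so that $D_{W}D_{W}\psi=0$ there; this collapse of the naive $O(s^{2})$ term into an $O(s^{4})$ one is precisely what the Killing plus fibre-invariance structure of $W^{\mathcal{H}}$ buys for us.
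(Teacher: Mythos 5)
Your proposal is correct, and for $\mathrm{Hess}f\left( X,X\right) $ it is essentially the paper's computation: both reduce to $D_{X}D_{X}f$ using that $X$ stays a geodesic field (the paper does this implicitly through $\left\langle \nabla _{X}\mathrm{grad}\,\psi ,X\right\rangle =D_{X}D_{X}\psi $ and $\left\langle \nabla _{X}\left( I^{\prime }X\right) ,X\right\rangle =I^{\prime \prime }$). For $\mathrm{Hess}f\left( W,W\right) $ you take a slightly different route. The paper exploits the standing hypothesis, already imposed when the conformal-change curvature formula was quoted, that $W$ is perpendicular to $\mathrm{grad}\,f$: metric compatibility then gives $\mathrm{Hess}f\left( W,W\right) =\left\langle \nabla _{W}\mathrm{grad}\,f,W\right\rangle =-\left\langle \mathrm{grad}\,f,\nabla _{W}^{g_{s}}W\right\rangle $, and Proposition \ref{Nabla_W W} finishes it in one line, with the $s^{4}I^{\prime }X$ part contributing $s^{6}I^{\prime }\psi D_{X}\psi =O\left( s^{6}\right) $. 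You instead expand $\nabla _{W}^{g_{s}}\left( \psi \,\mathrm{grad}\,\psi \right) $ by the product rule, which forces you to verify $D_{W}\psi \equiv 0$ near the torus and to feed Proposition \ref{Nabla_W W} through $\mathrm{Hess}^{g_{s}}\psi \left( W,W\right) $; the outcome is the same, at the cost of more bookkeeping, and your verification is in effect a proof of the perpendicularity $W\perp \mathrm{grad}\,f$ that the paper simply assumes. One small imprecision: the reason $D_{W^{\mathcal{H}}}\psi =0$ is not $G$--invariance of $\psi $ (the norm of a Killing field need not be invariant under a nonabelian $G$), but the Killing equation itself, $D_{W^{\mathcal{H}}}\left\vert W^{\mathcal{H}}\right\vert ^{2}=2\left\langle \nabla _{W^{\mathcal{H}}}W^{\mathcal{H}},W^{\mathcal{H}}\right\rangle =0$, combined with $\psi $ being a pullback from $B$ so that $D_{W^{\mathcal{V}}}\psi =0$; with that correction your argument is sound and consistent with the paper's hypotheses.
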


\begin{proof}
Since%
\begin{equation*}
\mathrm{grad}f=-\frac{s^{2}}{\left( 1-s^{2}\right) \left\vert W\right\vert
^{2}}\psi \mathrm{grad}\psi +s^{4}I^{\prime }X
\end{equation*}

we have 
\begin{eqnarray*}
\mathrm{Hess}f\left( X,X\right) &=&-\frac{s^{2}}{\left( 1-s^{2}\right)
\left\vert W\right\vert ^{2}}\left\langle \nabla _{X}\left( \psi \mathrm{grad%
}\psi \right) ,X\right\rangle +s^{4}\left\langle \nabla _{X}\left( I^{\prime
}X\right) ,X\right\rangle \\
&=&-\frac{s^{2}}{\left( 1-s^{2}\right) \left\vert W\right\vert ^{2}}\left(
\left( D_{X}\psi \right) ^{2}+\psi \left\langle \nabla _{X}\left( \mathrm{%
grad}\psi \right) ,X\right\rangle \right) +s^{4}I^{\prime \prime } \\
&=&-\frac{s^{2}}{\left( 1-s^{2}\right) \left\vert W\right\vert ^{2}}\left(
\left( D_{X}\psi \right) ^{2}+\psi D_{X}D_{X}\psi \right) +s^{4}I^{\prime
\prime } \\
&=&-\frac{s^{2}}{\left( 1-s^{2}\right) \left\vert W\right\vert ^{2}}%
D_{X}\left( \psi D_{\zeta }\psi \right) +s^{4}I^{\prime \prime }
\end{eqnarray*}

Since $W$ is perpendicular to $\mathrm{grad}f$ we have 
\begin{eqnarray*}
\mathrm{Hess}f\left( W,W\right) &=&\left\langle \nabla _{W}\mathrm{grad}%
f,W\right\rangle \\
&=&-\left\langle \mathrm{grad}f,\nabla _{W}W\right\rangle
\end{eqnarray*}%
Using the previous proposition this gives us 
\begin{eqnarray*}
\mathrm{Hess}f\left( W,W\right) &=&-\left\langle -\frac{s^{2}}{\left(
1-s^{2}\right) \left\vert W\right\vert ^{2}}\psi \mathrm{grad}\psi
,-s^{2}\psi \mathrm{grad}\psi \right\rangle -s^{4}\left\langle I^{\prime
}X,-s^{2}\psi \mathrm{grad}\psi \right\rangle \\
&=&-\frac{s^{4}}{\left( 1-s^{2}\right) \left\vert W\right\vert ^{2}}\psi
^{2}\left\vert \mathrm{grad}\psi \right\vert ^{2}+O\left( s^{6}\right)
\end{eqnarray*}
\end{proof}

\begin{proposition}
\label{curv(X,W)}After fiber scaling and the conformal change we have 
\begin{equation*}
e^{-2f}\mathrm{curv}\left( X,W\right) =s^{4}\left( D_{X}\psi \right)
^{2}+s^{4}\frac{\psi ^{2}}{\left\vert W\right\vert ^{2}}\left( D_{X}\psi
\right) ^{2}+s^{4}\frac{\psi ^{2}}{\left\vert W\right\vert ^{2}}D_{X}\left(
\psi D_{X}\psi \right) -s^{4}I^{\prime \prime }\left\vert W\right\vert
^{2}+O\left( s^{6}\right)
\end{equation*}
\end{proposition}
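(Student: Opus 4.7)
The plan is to substitute the available ingredients into the general conformal change formula for $e^{-2f}\mathrm{curv}_{\tilde{g}}(X,W)$ displayed just before Proposition \ref{Nabla_W W}, and then collect terms by order in $s$. Specifically, I would plug in $\mathrm{curv}_{g_s}(X,W)$ from equation \ref{fiber scale psi tilde}, both Hessians from Proposition \ref{Hessians copy(1)}, and two remaining ingredients $(D_X f)^2|W|_{g_s}^2$ and $|\mathrm{grad}\,f|^2|W|_{g_s}^2$, which I compute directly by squaring the explicit formula for $\mathrm{grad}\,f$ stated just before Proposition \ref{Nabla_W W}.

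The essential input to the bookkeeping is the identity
\[
|W|_{g_s}^2 = |W^{\mathcal{H}}|^2 + (1-s^2)|W^{\mathcal{V}}|^2 = (1-s^2)|W|^2 + s^2\psi^2,
\]
which gives $|W|_{g_s}^2/\bigl[(1-s^2)|W|^2\bigr] = 1 + s^2\psi^2/\bigl[(1-s^2)|W|^2\bigr]$. Multiplying this factor into the leading $-s^2/\bigl[(1-s^2)|W|^2\bigr]\,D_X(\psi D_X\psi)$ piece of $\mathrm{Hess}\,f(X,X)$ produces $s^2 D_X(\psi D_X\psi)$ together with an $s^4$ correction. The $s^2 D_X(\psi D_X\psi)$ exactly cancels the $-s^2 D_X(\psi D_X\psi)$ in $\mathrm{curv}_{g_s}(X,W)$; this order--$s^2$ cancellation is the whole reason for making this particular choice of conformal factor.

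At order $s^4$ a second cancellation occurs between $-\mathrm{Hess}\,f(W,W)$ and $-|\mathrm{grad}\,f|^2|W|_{g_s}^2$. The first contributes $+s^4\psi^2|\mathrm{grad}\,\psi|^2/\bigl[(1-s^2)|W|^2\bigr]$, while squaring $\mathrm{grad}\,f = -s^2\psi\mathrm{grad}\,\psi/\bigl[(1-s^2)|W|^2\bigr] + s^4 I' X$ and multiplying by $|W|_{g_s}^2$ yields the negative of this quantity at leading order, with the $s^4 I' X$ piece contributing only $O(s^6)$. The terms that survive are $s^4(D_X\psi)^2$ from $\mathrm{curv}_{g_s}(X,W)$, the $s^4\psi^2 D_X(\psi D_X\psi)/\bigl[(1-s^2)|W|^2\bigr]$ leftover from $-|W|_{g_s}^2\mathrm{Hess}\,f(X,X)$, the $s^4\psi^2(D_X\psi)^2/\bigl[(1-s^2)|W|^2\bigr]$ from $(D_X f)^2|W|_{g_s}^2$, and $-s^4 I''|W|_{g_s}^2$. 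Absorbing $1/(1-s^2) = 1 + O(s^2)$ and $|W|_{g_s}^2 = |W|^2 + O(s^2)$ into $O(s^6)$ produces the stated formula.

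The main obstacle is purely bookkeeping: verifying the order--$s^2$ cancellation exactly, which depends on the precise identity for $|W|_{g_s}^2$ above, and tracking carefully which products end up at order $s^6$ and which contribute at order $s^4$. No new geometric input is required beyond what is already assembled in Propositions \ref{Nabla_W W} and \ref{Hessians copy(1)}.
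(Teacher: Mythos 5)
Your proposal is correct and follows essentially the same route as the paper: substitute equation \ref{fiber scale psi tilde}, the Hessians of Proposition \ref{Hessians copy(1)}, and the explicit $\mathrm{grad}\,f$ into the conformal change formula, use $\left\vert W\right\vert _{g_{s}}^{2}=\left( 1-s^{2}\right) \left\vert W\right\vert ^{2}+s^{2}\psi ^{2}$ to get the order--$s^{2}$ cancellation against $-\left\vert W\right\vert _{g_{s}}^{2}\mathrm{Hess}f\left( X,X\right) $, and observe that $-\mathrm{Hess}f\left( W,W\right) -\left\vert \mathrm{grad}f\right\vert ^{2}\left\vert W\right\vert _{g_{s}}^{2}$ is $O\left( s^{6}\right) $. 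The bookkeeping of the surviving $s^{4}$ terms matches the paper's computation exactly.
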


\begin{remark}
We pick $I^{\prime \prime }$ so that the first four terms are $O\left(
s^{4}\right) $. The first two are positive except at $t=\frac{\pi }{4}.$ The
third can have either sign, and since the integral of $I^{\prime \prime }$
over an integral curve of $X$ is $0,$ the term $s^{4}I^{\prime \prime
}\left\vert W\right\vert ^{2}$ also has both signs. After proving the
proposition we will argue that the integral $e^{-2f}\mathrm{curv}\left(
X,W\right) $ is positive, and hence that an appropriate choice of $I^{\prime
\prime }$ will give us point wise positive curvature.
\end{remark}

\begin{proof}
Combining $\left\vert X\right\vert \equiv 1,$ equation \ref{fiber scale psi
tilde}, the formula for the curvature of a conformal change (\cite{Pet},
exercise 3.5), and the fact that $W$ is perpendicular to $\mathrm{grad}f$ we
have 
\begin{eqnarray*}
e^{-2f}\mathrm{curv}_{\tilde{g}}\left( X,W\right) &=&-s^{2}\left(
D_{X}\left( \psi D_{X}\psi \right) \right) +s^{4}\left( D_{X}\psi \right)
^{2}-\left\vert W\right\vert _{g_{s}}^{2}\mathrm{Hess}f\left( X,X\right) -%
\mathrm{Hess}f\left( W,W\right) \\
&&+\left( D_{X}f\right) ^{2}\left\vert W\right\vert _{g_{s}}^{2}-\left\vert 
\mathrm{grad}f\right\vert ^{2}\left\vert W\right\vert _{g_{s}}^{2}.
\end{eqnarray*}%
To evaluate this we will need

\begin{eqnarray*}
\left\vert W\right\vert _{g_{s}}^{2} &=&\left( 1-s^{2}\right) \left\vert W^{%
\mathcal{V}}\right\vert ^{2}+\left\vert W^{\mathcal{H}}\right\vert ^{2} \\
&=&\left\vert W\right\vert ^{2}-s^{2}\left\vert \left( W\right) ^{\mathcal{V}%
}\right\vert ^{2} \\
&=&\left\vert W\right\vert ^{2}-s^{2}\left( \left\vert W\right\vert
^{2}-\left\vert W^{\mathcal{H}}\right\vert ^{2}\right) \\
&=&\left( 1-s^{2}\right) \left\vert W\right\vert ^{2}+s^{2}\left\vert W^{%
\mathcal{H}}\right\vert ^{2} \\
&=&\left( 1-s^{2}\right) \left\vert W\right\vert ^{2}+s^{2}\psi ^{2}.
\end{eqnarray*}%
Combining this with the previous proposition we see that the sum of the
first and third term is 
\begin{eqnarray*}
&&-s^{2}\left( D_{X}\left( \psi D_{X}\psi \right) \right) -\left\vert
W\right\vert _{s}^{2}\mathrm{Hess}f\left( X,X\right) \\
&=&-s^{2}\left( D_{X}\left( \psi D_{X}\psi \right) \right) +\left(
1-s^{2}\right) \left\vert W\right\vert ^{2}\frac{s^{2}}{\left(
1-s^{2}\right) \left\vert W\right\vert ^{2}}D_{X}\left( \psi D_{X}\psi
\right) \\
&&+s^{2}\psi ^{2}\frac{s^{2}}{\left( 1-s^{2}\right) \left\vert W\right\vert
^{2}}D_{X}\left( \psi D_{X}\psi \right) -s^{4}I^{\prime \prime }\left\vert
W\right\vert _{g_{s}}^{2} \\
&=&s^{4}\frac{\psi ^{2}}{\left\vert W\right\vert ^{2}}D_{X}\left( \psi
D_{X}\psi \right) -s^{4}I^{\prime \prime }\left\vert W\right\vert
^{2}+O\left( s^{6}\right) .
\end{eqnarray*}%
The sum of the fourth and last terms is 
\begin{eqnarray*}
-\mathrm{Hess}f\left( W,W\right) -\left\vert \mathrm{grad}f\right\vert
^{2}\left\vert W\right\vert _{g_{s}}^{2} &=&\frac{s^{4}}{\left(
1-s^{2}\right) \left\vert W\right\vert ^{2}}\psi ^{2}\left\vert \mathrm{grad}%
\psi \right\vert ^{2}-\frac{s^{4}}{\left( 1-s^{2}\right) ^{2}\left\vert
W\right\vert ^{4}}\left\vert \psi \mathrm{grad}\psi \right\vert
^{2}\left\vert W\right\vert _{s}^{2}+O\left( s^{6}\right) \\
&=&O\left( s^{6}\right) .
\end{eqnarray*}%
The fifth term of our curvature formula is 
\begin{equation*}
\left( D_{X}f\right) ^{2}\left\vert W\right\vert _{g_{s}}^{2}=s^{4}\frac{%
\psi ^{2}}{\left\vert W\right\vert ^{2}}\left( D_{X}\psi \right)
^{2}+O\left( s^{6}\right) .
\end{equation*}

Combining equations we obtain 
\begin{equation*}
e^{-2f}\mathrm{curv}\left( X,W\right) =s^{4}\left( D_{X}\psi \right)
^{2}+s^{4}\frac{\psi ^{2}}{\left\vert W\right\vert ^{2}}\left( D_{X}\psi
\right) ^{2}+s^{4}\frac{\psi ^{2}}{\left\vert W\right\vert ^{2}}D_{X}\left(
\psi D_{X}\psi \right) -s^{4}I^{\prime \prime }\left\vert W\right\vert
^{2}+O\left( s^{6}\right)
\end{equation*}

as desired.
\end{proof}

To understand the sign of the above formula we will need to understand some
relationships between the integrals of the first three terms.

\begin{proposition}
\label{integrals of psi tilde}Let $\gamma :\left[ 0,\frac{\pi }{4}\right]
\longrightarrow M$ be an integral curve of $X$ with $\gamma \left( 0\right)
\in \tilde{T}\left( \left\{ 0\right\} \times \left[ 0,l\right] \right) .$
Then using $\psi ^{\prime }$ for $D_{X}\psi $%
\begin{equation*}
\int_{\gamma }\psi ^{2}\left( \psi ^{\prime }\right) ^{2}dt=-\frac{1}{3}%
\int_{\gamma }\psi ^{2}\left( \psi \psi ^{\prime \prime }\right) dt
\end{equation*}%
\begin{equation*}
\int_{\gamma }\psi ^{2}\left( \psi \psi ^{\prime }\right) ^{\prime
}dt=-2\int_{\gamma }\psi ^{2}\left( \psi ^{\prime }\right) ^{2}dt
\end{equation*}
\end{proposition}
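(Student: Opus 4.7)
The plan is to derive both identities by integration by parts on $[0,\pi/4]$, exploiting the two boundary conditions built into the hypotheses: at $t=0$ the point $\gamma(0)$ lies in $\tilde{T}(\{0\}\times[0,l])$, which by assumption is a zero of $\psi = |W^{\mathcal{H}}|$, so $\psi(\gamma(0)) = 0$; at $t=\pi/4$ the function $\psi$ attains a maximum along $\gamma$, so $\psi'(\gamma(\pi/4)) = 0$, where $\psi' := D_X\psi$. Together these kill the only boundary terms that will appear.

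For the first identity, I would rewrite $\psi^2(\psi\psi'') = \psi^3\psi''$ and integrate by parts:
\begin{equation*}
\int_\gamma \psi^3 \psi''\, dt = \bigl[\psi^3 \psi'\bigr]_0^{\pi/4} - 3\int_\gamma \psi^2 (\psi')^2\, dt.
\end{equation*}
The boundary term vanishes: at $t=0$ the factor $\psi^3$ is zero, and at $t=\pi/4$ the factor $\psi'$ is zero. Dividing by $-3$ gives
\begin{equation*}
\int_\gamma \psi^2(\psi')^2\, dt = -\tfrac{1}{3}\int_\gamma \psi^2(\psi\psi'')\, dt,
\end{equation*}
which is the first claim.

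For the second identity, I would expand the derivative:
\begin{equation*}
(\psi\psi')' = (\psi')^2 + \psi\psi'',
\end{equation*}
multiply by $\psi^2$ and integrate, then substitute the first identity to replace $\int_\gamma \psi^3\psi''\, dt$ with $-3\int_\gamma \psi^2(\psi')^2\, dt$:
\begin{equation*}
\int_\gamma \psi^2(\psi\psi')'\, dt = \int_\gamma \psi^2(\psi')^2\, dt + \int_\gamma \psi^3\psi''\, dt = \int_\gamma \psi^2(\psi')^2\, dt - 3\int_\gamma \psi^2(\psi')^2\, dt,
\end{equation*}
which simplifies to $-2\int_\gamma \psi^2(\psi')^2\, dt$, as required.

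There is no genuine obstacle here; the only thing to verify carefully is that the two boundary contributions vanish, and this is exactly what the hypotheses on the location of the zeros and maxima of $\psi$ along $\gamma$ provide. Note that we could equally well derive (1) from (2) by an integration by parts in the reverse direction, so the two identities are essentially a single computation viewed from two angles.
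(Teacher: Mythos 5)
Your proof is correct and is essentially the paper's own argument: the same integration by parts (the paper runs it starting from $\int_{\gamma}\psi^{2}(\psi')^{2}\,dt$ rather than from $\int_{\gamma}\psi^{3}\psi''\,dt$, but the boundary term $\psi^{3}\psi'$ and the reasons it vanishes at $t=0$ and $t=\frac{\pi}{4}$ are identical), followed by expanding $(\psi\psi')'=(\psi')^{2}+\psi\psi''$ and substituting the first identity to get the second.
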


\begin{proof}
The first equation follows from integration by parts 
\begin{eqnarray*}
\int_{\gamma }\psi ^{2}\left( \psi ^{\prime }\right) ^{2}dt &=&\int_{\gamma
}\psi ^{\prime }\left( \psi ^{2}\psi ^{\prime }\right) dt \\
&=&\left. \psi ^{\prime }\frac{1}{3}\psi ^{3}\right\vert _{0}^{\frac{\pi }{4}%
}-\int_{\gamma }\psi ^{\prime \prime }\frac{1}{3}\psi ^{3}dt \\
&=&-\frac{1}{3}\int_{\gamma }\psi ^{\prime \prime }\psi ^{3}dt
\end{eqnarray*}

So%
\begin{eqnarray*}
\int_{\gamma }\psi ^{2}\left( \psi \psi ^{\prime }\right) ^{\prime }dt
&=&\int_{\gamma }\psi ^{2}\left\{ \left( \psi ^{\prime }\right) ^{2}+\psi
\psi ^{\prime \prime }\right\} dt \\
&=&\int_{\gamma }\psi ^{2}\left\{ \left( \psi ^{\prime }\right) ^{2}-3\left(
\psi ^{\prime }\right) ^{2}\right\} dt \\
&=&-2\int_{\gamma }\psi ^{2}\left( \psi ^{\prime }\right) ^{2}dt
\end{eqnarray*}
\end{proof}

Using the second equation of the previous proposition we can re-write the
integral of our curvature over $\gamma $ as 
\begin{eqnarray*}
\int_{\gamma }e^{-2f}\mathrm{curv}\left( X,W\right) &=&\int_{\gamma
}s^{4}\left( D_{X}\psi \right) ^{2}+s^{4}\frac{\psi ^{2}}{\left\vert
W\right\vert ^{2}}\left( D_{X}\psi \right) ^{2}+s^{4}\frac{\psi ^{2}}{%
\left\vert W\right\vert ^{2}}D_{X}\left( \psi D_{X}\psi \right)
-s^{4}I^{\prime \prime }\left\vert W\right\vert ^{2}+O\left( s^{6}\right) \\
&=&\int_{\gamma }s^{4}\left( D_{X}\psi \right) ^{2}-s^{4}\frac{\psi ^{2}}{%
\left\vert W\right\vert ^{2}}\left( D_{X}\psi \right) ^{2}-s^{4}I^{\prime
\prime }\left\vert W\right\vert ^{2}+O\left( s^{6}\right) .
\end{eqnarray*}%
Since $\psi ^{2}=\left\vert W^{\mathcal{H}}\right\vert ^{2}$ we always have 
\begin{equation*}
\frac{\psi ^{2}}{\left\vert W\right\vert ^{2}}\leq 1.
\end{equation*}%
Since we also have $\psi ^{2}\left( 0\right) =0,$ the inequality is strict
at least for a while. It follows that the integral 
\begin{equation*}
\int_{\gamma }s^{4}\left( D_{X}\psi \right) ^{2}-s^{4}\frac{\psi ^{2}}{%
\left\vert W\right\vert ^{2}}\left( D_{X}\psi \right) ^{2}\geq O\left(
s^{4}\right) >0
\end{equation*}%
Since $I^{\prime }\left( 0\right) =I^{\prime }\left( \frac{\pi }{4}\right)
=0 $, 
\begin{equation*}
\int_{\gamma }I^{\prime \prime }=0,
\end{equation*}%
so we also have 
\begin{equation*}
\int_{\gamma }e^{-2f}\mathrm{curv}\left( X,W\right) >O\left( s^{4}\right) >0.
\end{equation*}%
However, the quantity 
\begin{equation*}
s^{4}\left( D_{X}\psi \right) ^{2}+s^{4}\frac{\psi ^{2}}{\left\vert
W\right\vert ^{2}}\left( D_{X}\psi \right) ^{2}+s^{4}\frac{\psi ^{2}}{%
\left\vert W\right\vert ^{2}}D_{X}\left( \psi D_{X}\psi \right)
\end{equation*}%
can have some negative values, but by choosing $I^{\prime \prime }$ to be
sufficiently negative in the region where 
\begin{equation*}
s^{4}\left( D_{X}\psi \right) ^{2}+s^{4}\frac{\psi ^{2}}{\left\vert
W\right\vert ^{2}}\left( D_{X}\psi \right) ^{2}+s^{4}\frac{\psi ^{2}}{%
\left\vert W\right\vert ^{2}}D_{X}\left( \psi D_{X}\psi \right) <0
\end{equation*}%
we can make $e^{-2f}\mathrm{curv}\left( X,W\right) $ positive in this
region. We will have to pay for this by having $I^{\prime \prime }$ be
nonnegative on the rest of $\left[ 0,\frac{\pi }{4}\right] .$ Since 
\begin{equation*}
\int_{\gamma }s^{4}\left( D_{X}\psi \right) ^{2}+s^{4}\frac{\psi ^{2}}{%
\left\vert W\right\vert ^{2}}\left( D_{X}\psi \right) ^{2}+s^{4}\frac{\psi
^{2}}{\left\vert W\right\vert ^{2}}D_{X}\left( \psi D_{X}\psi \right) >0
\end{equation*}%
this can be achieved while keeping $e^{-2f}\mathrm{curv}\left( X,W\right) >0$
point wise.

\section{Tangential Partial Conformal Change}

There are two basic reasons why the combination of fiber scaling and a
conformal change as outlined above can not produce positive curvature on all
of the initially flat tori in the Gromoll--Meyer sphere. Before stating them
we recall that there are two families of initially flat tori in the
Gromoll--Meyer sphere, $\mathcal{F}_{\zeta }$ and $\mathcal{F}_{\xi }$ that
intersect orthogonally.

\begin{description}
\item[1] For one of the two families, $\mathcal{F}_{\zeta },$ the function 
\begin{equation*}
\frac{\left\vert W^{\mathcal{H}}\right\vert ^{2}}{\left\vert W\right\vert
^{2}}=\frac{\psi ^{2}}{\left\vert W\right\vert ^{2}}
\end{equation*}%
varies from torus to torus. The required conformal factor is $e^{2f}$ where 
\begin{equation*}
f=-\frac{s^{2}}{2\left( 1-s^{2}\right) }\frac{\psi ^{2}}{\left\vert
W\right\vert ^{2}}+s^{4}E
\end{equation*}%
and hence varies from torus to torus. A particular choice of $f$ will give
us positive curvature on \emph{some }of our tori, but for the others the
leading terms $-s^{2}\left( D_{X}\left( \psi D_{X}\psi \right) \right) ,$
and $-\left\vert W\right\vert _{g_{s}}^{2}\mathrm{Hess}f\left( X,X\right) $
will not cancel; so these tori would have curvatures of both signs. There is
no \textbf{one }conformal factor that will simultaneously make all of the
tori in $\mathcal{F}_{\zeta }$ positively curved.

\item[2] The conformal factor required to make $\mathcal{F}_{\xi }$
positively curved is different from all of the conformal factors required to
make $\mathcal{F}_{\zeta }$ positively curved.
\end{description}

Note that either of these reasons is sufficient to see that a conformal
change can not be combined with fiber scaling to put positive curvature on
all of the totally geodesic flats of the Gromoll--Meyer sphere. We have
mentioned both since both difficulties will have to be overcome.

In this section, we shall see that despite the problems mentioned above, the
results of the previous section are at least \emph{morally} correct. The
tangential partial conformal change that we describe will have the same
effect on the curvatures of the initially flat tori as an actual conformal
change--with the correct conformal factor for each torus.

Although the key function $\frac{\left\vert W^{\mathcal{H}}\right\vert ^{2}}{%
\left\vert W\right\vert ^{2}}$ varies from torus to torus on the
Gromoll--Meyer sphere, the way in which this ratio varies is rather special.
In fact, $W$ has an orthogonal decomposition 
\begin{equation*}
W=W^{\alpha }+W^{\gamma }
\end{equation*}%
where

\begin{itemize}
\item $W^{\alpha }$ is vertical for $Sp\left( 2\right) \longrightarrow
S^{4}, $

\item and $X$ and $W^{\alpha }$ span a totally geodesic flat.
\end{itemize}

Although $W^{\gamma }$ is perpendicular to $W^{\alpha }$ it is neither
vertical nor horizontal for $Sp\left( 2\right) \longrightarrow S^{4};$
however, because $W^{\alpha }$ is vertical, $W^{\mathcal{H}}=\left(
W^{\gamma }\right) ^{\mathcal{H}}.$ In particular the ratio%
\begin{equation*}
\frac{\left\vert W^{\mathcal{H}}\right\vert ^{2}}{\left\vert W^{\gamma
}\right\vert ^{2}}
\end{equation*}%
is constant on the family of tori $\mathcal{F}_{\zeta }$.

Exploiting this structure and the principle

\begin{quotation}
\emph{totally geodesic flats are preserved when the metric is changed
orthogonally to the flat,}
\end{quotation}

\noindent we will resolve the first problem by choosing the partial
conformal change to leave $g\left( W^{\alpha },\cdot \right) $ unchanged.

We show here that such a change will have the same effect on $\mathrm{curv}%
\left( X,W\right) $ as a conformal change, with $W^{\gamma }$ playing the
role of $W.$

The resolution of the second problem also exploits the principle that
totally geodesic flats are preserved when the metric is changed orthogonally
to the flat.

In the end we will make two partial conformal changes using $f_{\zeta }$ and 
$f_{\xi }.$ The $f_{\zeta }$ change will leave the metric on $\mathcal{F}%
_{\xi }$ unchanged, and the $f_{\xi }$ change will leave the metric on $%
\mathcal{F}_{\zeta }$ unchanged. Since the two families of tori intersect
orthogonally, we will be able to argue that the $f_{\zeta }$ change does not
have any effect on the curvature of $\mathcal{F}_{\xi },$and the $f_{\xi }$
change does not have any effect on the curvature of $\mathcal{F}_{\zeta }.$

The setup for our tangential partial conformal change is as follows. There
are mutually orthogonal distributions $\mathcal{X},$ $\mathcal{A},$ and $%
\mathcal{G}$ with the properties

\begin{description}
\item[1] $\mathcal{X}$ is integrable and totally geodesic.

\item[2] Any pair of vectors $Z\in \mathcal{X}$ and $U\in \mathcal{A}$ span
a totally geodesic flat torus.

\item[3] $\left[ \mathcal{X},\mathcal{A}^{\perp }\right] \subset \mathcal{A}%
^{\perp }.$

\item[4] There is a function $f$ whose gradient lies in $\mathcal{X}.$

\item[5] There is a geodesic field $X\in \mathcal{X}.$

\item[6] $\left[ X,\mathcal{G}\right] \subset \mathcal{G}.$
\end{description}

We change the metric by multiplying the lengths of all vectors in the
distribution $\mathrm{span}\left\{ X,\mathcal{G}\right\} $ by $e^{2f},$
while keeping the orthogonal complement of $\mathrm{span}\left\{ X,\mathcal{G%
}\right\} $ fixed. In particular, $g\left( \mathcal{A},\cdot \right) $ is
unchanged.

In the concrete situation we will have two functions $f_{\zeta }$ and $%
f_{\xi }.$ To accommodate this here we also assume that there is a $C^{0}$%
--small, but unspecified change to the orthogonal complement of $\mathrm{span%
}\left\{ X,\mathcal{G},\mathcal{A}\right\} .$ We call the resulting metric $%
\tilde{g}$.

In the concrete setting the splitting $W=W^{\alpha }+W^{\gamma }$ mentioned
above satisfies $W^{a}\in \mathcal{A}$ and $W^{\gamma }\in \mathcal{G}.$

We analyze here the effect of such a change on $R\left( W,X\right) X$ and $%
R\left( X,W^{\gamma }\right) W^{\gamma }.$ Since $\mathrm{span}\left\{
X,W\right\} $ is an abstraction of the zero planes in the Gromoll--Meyer
sphere, we would ideally also have formulas for $R\left( X,W\right) W;$
however, we have not succeed in making a satisfactory abstraction of this
calculation, and so have deferred it to the concrete setting.

We use the indices $z,$ $\alpha $, to denote components of the $\theta $s, $%
\omega $s and $\Omega $s corresponding to $z\in \mathcal{X}$ and $U^{\alpha
}\in \mathcal{A}$. We use $\widetilde{}$ to denote the metric quantities
with respect to $\tilde{g}$, and \textquotedblleft bar\textquotedblright\ to
denote the quantities with respect to the metric obtained from $g$ with
respect to an actual conformal change with conformal factor $e^{2f},$ e.g. $%
\bar{g}$ and $\bar{\omega}.$

\begin{proposition}
\label{z-alpha vanishing curv}For any vector $z\in \mathcal{X}$ and $%
U^{\alpha }\in \mathcal{A}$ 
\begin{eqnarray*}
\tilde{\omega}_{z}^{\alpha } &\equiv &0, \\
\tilde{\omega}_{z}^{i}\left( U^{\alpha }\right) &=&\tilde{\omega}_{\alpha
}^{i}\left( z\right) =0
\end{eqnarray*}%
for all $i.$
\end{proposition}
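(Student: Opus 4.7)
The three assertions translate, via $\tilde\omega^i_j(Y)=\tilde g(\tilde\nabla_Y \tilde E_j,\tilde E_i)$, to the vector-field identities (a)~$\tilde\nabla_Y z$ has no $\mathcal A$-component for every $Y$, (b)~$\tilde\nabla_{U^\alpha} z = 0$, and (c)~$\tilde\nabla_z U^\alpha = 0$. My plan is to verify each by direct Koszul expansion in $\tilde g$, leaning on four consequences of the hypotheses.

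First, because the partial conformal change rescales $g$ only on $\mathrm{span}\{X,\mathcal G\}$ and $\mathcal A$ is orthogonal to this subbundle in both metrics, $\tilde g(V,U^\alpha)=g(V,U^\alpha)$ for every $V$; in particular the $\tilde g$-inner products with $U^\alpha$ appearing in Koszul match the corresponding $g$-inner products. Second, $\mathrm{grad}_g f\in\mathcal X\perp\mathcal A$ forces $U^\alpha(f)=df(U^\alpha)=0$, so differentiating any $\tilde g$-quantity in the $U^\alpha$ direction contributes no $f$-derivative piece. Third, by hypothesis~2 the vectors $z$ and $U^\alpha$ span a totally geodesic flat torus in $g$; choosing extensions parallel along this torus gives $\nabla_z U^\alpha=\nabla_{U^\alpha}z=0$ and $[z,U^\alpha]=0$ in $g$, and for any $V\in\mathcal A$ metric compatibility together with $g(V,z)=0$ then gives $g([U^\alpha,V],z)=0$. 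Fourth, hypothesis~3 gives $g([z,V],U^\alpha)=0$ whenever $V\in\mathcal A^\perp$.

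Feeding these into the Koszul formula for $2\tilde g(\tilde\nabla_Y z,U^\alpha)$, the term $Y\tilde g(z,U^\alpha)$ vanishes by orthogonality, the three bracket terms drop out after splitting $Y$ into its $\mathcal A$- and $\mathcal A^\perp$-components and applying the third and fourth observations, and the remaining derivative terms reduce, up to an overall $e^{2f}$ factor controlled by the second observation, to the $g$-Koszul expansion of $2g(\nabla_{U^\alpha}z,Y)$, which is itself zero by the third observation. This establishes (a); specializing $Y=U^\alpha$ or $Y=z$ and running the argument with the role of $U^\alpha$ played by an arbitrary orthonormal frame vector then yields (b) and (c). I expect the main obstacle to be bookkeeping the $e^{2f}$ factors so that each one multiplies a quantity that already vanishes in $g$, together with arranging consistent local extensions of $z$ and $U^\alpha$ parallel along each relevant torus so that the third observation applies pointwise where we need it.
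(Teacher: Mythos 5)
Your proposal is correct and takes essentially the same route as the paper's own proof: both reduce the three identities to $\tilde{\nabla}_{U^{\alpha }}Z=\tilde{\nabla}_{Z}U^{\alpha }=0$ and $\tilde{g}\left( \tilde{\nabla}_{Y}Z,U^{\alpha }\right) =0$, use the totally geodesic flat tori to get the corresponding vanishing for $\nabla $ with suitable extensions, and then show that every surviving term of the Koszul formula for $\tilde{g}$ (the bracket terms, handled via $\left[ \mathcal{X},\mathcal{A}^{\perp }\right] \subset \mathcal{A}^{\perp }$, and the derivative terms, handled via $\tilde{g}\left( \cdot ,U^{\alpha }\right) =g\left( \cdot ,U^{\alpha }\right) $ and $D_{U^{\alpha }}f=0$) already vanishes for $g.$ The only differences are organizational --- the paper case-splits on the third Koszul slot $N$ and deduces one bracket term's vanishing from the other through the $g$--Koszul identity, while you split $Y$ into its $\mathcal{A}$ and $\mathcal{A}^{\perp }$ parts and track the conformal factor explicitly --- and your justification of $g\left( \left[ U^{\alpha },V\right] ,Z\right) =0$ for $V\in \mathcal{A}$ quietly uses the flat spanned by $Z$ and $V$ (hypothesis 2 again), not just metric compatibility, which is exactly the level of rigor of the paper's own treatment of that term.
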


\begin{proof}
The last two equations are equivalent to the statement that any $z$ $\in 
\mathcal{X}$ and any $U^{\alpha }\in \mathcal{A}$ have extensions with $%
\tilde{\nabla}_{U^{\alpha }}Z=\tilde{\nabla}_{Z}U^{\alpha }=0.$ The presence
of the flat tori give us this result for $\nabla .$ For $N=Z,U^{\alpha }$ or
normal to both $Z$ and $U^{\alpha }$ we have 
\begin{eqnarray*}
0 &=&2g\left( \nabla _{U^{\alpha }}Z,N\right) \\
&=&-g\left( \left[ Z,N\right] ,U^{\alpha }\right) +g\left( \left[
N,U^{\alpha }\right] ,Z\right)
\end{eqnarray*}

Hypothesis $4$ gives us that $g\left( \left[ N,U^{\alpha }\right] ,Z\right)
=0.$ So it follows that $g\left( \left[ Z,N\right] ,U^{\alpha }\right) =0.$
This gives us 
\begin{eqnarray*}
2\tilde{g}\left( \nabla _{U^{\alpha }}Z,N\right) &=&-\tilde{g}\left( \left[
Z,N\right] ,U^{\alpha }\right) +\tilde{g}\left( \left[ N,U^{\alpha }\right]
,Z\right) \\
&=&0
\end{eqnarray*}

as desired.

The first equation is equivalent to $\left\langle \tilde{\nabla}%
_{N}Z,U^{\alpha }\right\rangle =0.$ This follows for the same reasons.
\end{proof}

\begin{proposition}
For $X$ as above and $W\in \mathrm{span}\left\{ \mathcal{A},\mathcal{G}%
\right\} $%
\begin{eqnarray*}
\tilde{\omega}_{X}^{E_{k}}\left( X\right) &=&\bar{\omega}_{X}^{E_{k}}\left(
X\right) , \\
\tilde{\omega}_{X}^{E_{k}}\left( W\right) &=&\bar{\omega}_{X}^{E_{k}}\left(
W^{\gamma }\right) , \\
\tilde{\omega}_{W}^{E_{k}}\left( X\right) &=&\bar{\omega}_{W^{\gamma
}}^{E_{k}}\left( X\right) , \\
\tilde{\omega}_{W^{\gamma }}^{E_{k}}\left( W^{\gamma }\right) &=&\bar{\omega}%
_{W^{\gamma }}^{E_{k}}\left( W^{\gamma }\right) ,
\end{eqnarray*}%
where $W^{\gamma }$ denotes the component of $W$ in $\mathcal{G}.$
\end{proposition}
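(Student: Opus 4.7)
My plan is to establish all four equations by Koszul--formula computation, reducing each one to a comparison between $\tilde\nabla$ and $\bar\nabla$ on vectors in $\mathrm{span}\{X,\mathcal{G}\}$, where the two metrics in fact agree. The essential preliminary step is to isolate the contribution coming from $W^{\alpha}$ and show it vanishes on both sides. Once $W^{\alpha}$ is eliminated, everything in sight lies in $\mathrm{span}\{X,\mathcal{G}\}$, and on this distribution $\tilde g$ and $\bar g = e^{2f}g$ agree as bilinear forms.

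First, I would invoke Proposition \ref{z-alpha vanishing curv} to kill all $\mathcal{A}$--contributions, giving $\tilde\omega_{z}^{i}(U^{\alpha})=\tilde\omega_{\alpha}^{i}(z)=0$ for every $z\in\mathcal{X}$ and $U^{\alpha}\in\mathcal{A}$. The Koszul argument used there depends only on hypotheses 1--3 together with the presence of the flat $(Z,U^{\alpha})$--tori. Rerunning the same argument for $\bar g$ produces $\bar\omega_{z}^{i}(U^{\alpha})=\bar\omega_{\alpha}^{i}(z)=0$, since a conformal change preserves each $(X,U^{\alpha})$--flat and keeps $\mathrm{grad}\,f$ inside $\mathcal{X}$. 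Splitting $W=W^{\alpha}+W^{\gamma}$, this shows $\tilde\omega_{X}^{E_{k}}(W)=\tilde\omega_{X}^{E_{k}}(W^{\gamma})$ for the second equation, and via the skew--symmetry $\omega_{j}^{i}=-\omega_{i}^{j}$ the analogous reduction for the third.

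Second, with only $X$ and $W^{\gamma}$ left to consider, I would compare $\tilde\nabla$ and $\bar\nabla$ via Koszul. Since $\tilde g$ and $\bar g$ coincide on $\mathrm{span}\{X,\mathcal{G}\}$, the matching reduces to checking that the Lie--bracket terms entering Koszul's formula are evaluated against the same vectors in the two metrics. Hypothesis 6, $[X,\mathcal{G}]\subset\mathcal{G}$, puts every bracket $[X,W^{\gamma}]$ inside $\mathcal{G}$; hypothesis 4 keeps $\mathrm{grad}\,f$ inside $\mathcal{X}$, so the conformal--change terms $df(Y)Z+df(Z)Y-g(Y,Z)\mathrm{grad}\,f$ are produced identically by the partial and the full conformal procedures when $Y,Z\in\mathrm{span}\{X,\mathcal{G}\}$. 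These observations, applied slot by slot, yield the first and fourth equations directly and, combined with the vanishing from step one, the second and third.

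The main obstacle will be the \emph{unspecified} $C^{0}$--small change to the orthogonal complement of $\mathrm{span}\{X,\mathcal{G},\mathcal{A}\}$. In principle this perturbation could leak into Koszul terms of the form $\tilde g([X,W^{\gamma}],N)$ with $N$ in that complement, and one must show no such $N$--component survives for the four slot configurations listed. Hypothesis 6 is the way out: it forces the relevant brackets into $\mathcal{G}$, where $\tilde g = \bar g$ holds exactly, so the perturbation is simply not probed. Verifying this cleanly, and checking that no cross--term with the unspecified perturbation survives once everything is rewritten in the $\tilde g$--orthonormal coframe, is where the delicate bookkeeping lies.
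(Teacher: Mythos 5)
Your proposal is correct and follows essentially the paper's own route: reduce $W$ to $W^{\gamma }$ via Proposition \ref{z-alpha vanishing curv}, then compare $\tilde{\omega}$ with $\bar{\omega}$ slot by slot in the Koszul formula, using $\left[ X,W^{\gamma }\right] \in \mathcal{G}$ together with the fact that $\tilde{g}$ and $\bar{g}$ agree whenever one entry of the inner product lies in $\mathrm{span}\left\{ X,\mathcal{G}\right\} $, which is exactly how the paper disposes of the unspecified change on the orthogonal complement. The only blemish is the unused aside that rerunning Proposition \ref{z-alpha vanishing curv} for $\bar{g}$ yields $\bar{\omega}_{z}^{i}\left( U^{\alpha }\right) =0$ for all $i$; this is false in general, since $\bar{\nabla}_{U^{\alpha }}Z=df\left( Z\right) U^{\alpha }$ (using $\nabla _{U^{\alpha }}Z=0$ and $\mathrm{grad}\,f\in \mathcal{X}$), but your argument never needs it because the right-hand sides of all four identities already involve only $X$ and $W^{\gamma }$.
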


\begin{proof}
By the previous proposition we have 
\begin{equation*}
\tilde{\omega}_{X}^{E_{k}}\left( W\right) =\tilde{\omega}_{X}^{E_{k}}\left(
W^{\alpha }\right) +\tilde{\omega}_{X}^{E_{k}}\left( W^{\gamma }\right) =%
\tilde{\omega}_{X}^{E_{k}}\left( W^{\gamma }\right) .
\end{equation*}

So the second equation reduces to 
\begin{equation*}
\tilde{\omega}_{X}^{E_{k}}\left( W^{\gamma }\right) =\bar{\omega}%
_{X}^{E_{k}}\left( W^{\gamma }\right) .
\end{equation*}%
Similarly, the third equation reduces to 
\begin{equation*}
\tilde{\omega}_{W}^{E_{k}}\left( X\right) =\bar{\omega}_{W^{\gamma
}}^{E_{k}}\left( X\right) .
\end{equation*}

The proofs of each of these and the first and fourth equations are
essentially the same, and boil down to the facts that 
\begin{eqnarray*}
\left[ X,X\right] &=&\left[ W^{\gamma },W^{\gamma }\right] =0,\text{ and} \\
\left[ X,W^{\gamma }\right] &\in &\mathcal{G}.
\end{eqnarray*}%
For $\tilde{\omega}_{X}^{E_{k}}\left( W^{\gamma }\right) =\bar{\omega}%
_{X}^{E_{k}}\left( W^{\gamma }\right) $ the details are 
\begin{eqnarray*}
\tilde{\omega}_{W^{\gamma }}^{E_{k}}X &=&\tilde{g}\left( \tilde{\nabla}%
_{X}W^{\gamma },E_{k}\right) \\
&=&\frac{1}{2}\left( D_{X}\tilde{g}\left( W^{\gamma },E_{k}\right)
+D_{W^{\gamma }}\tilde{g}\left( X,E_{k}\right) -D_{E_{k}}\tilde{g}\left(
X,W^{\gamma }\right) \right. \\
&&\left. +\tilde{g}\left( \left[ X,W^{\gamma }\right] ,E_{k}\right) -\tilde{g%
}\left( \left[ W^{\gamma },E_{k}\right] ,X\right) -\tilde{g}\left( \left[
X,E_{k}\right] ,W^{\gamma }\right) \right) .
\end{eqnarray*}%
For the fourth term we have $\tilde{g}\left( \left[ X,W^{\gamma }\right]
,E_{k}\right) =$ $\bar{g}\left( \left[ X,W^{\gamma }\right] ,E_{k}\right) .$
This is because of our hypothesis that $\left[ X,W^{\gamma }\right] \in 
\mathcal{G}.$ For the other terms we can also change $\tilde{g}$ to $g$ for
the same reason--that one of the vectors in the inner product is in $\mathrm{%
span}\left\{ X,\mathcal{G}\right\} .$ Thus 
\begin{eqnarray*}
\tilde{\omega}_{W^{\gamma }}^{E_{k}}X &=&\frac{1}{2}\left( D_{X}\bar{g}%
\left( W^{\gamma },E_{k}\right) +D_{W^{\gamma }}\bar{g}\left( X,E_{k}\right)
-D_{E_{k}}\bar{g}\left( X,W^{\gamma }\right) \right. \\
&&\left. +\bar{g}\left( \left[ X,W^{\gamma }\right] ,E_{k}\right) -\bar{g}%
\left( \left[ W^{\gamma },E_{k}\right] ,X\right) +\bar{g}\left( \left[
X,E_{k}\right] ,W^{\gamma }\right) \right) \\
&=&\bar{g}\left( \bar{\nabla}_{X}W^{\gamma },E_{k}\right) \\
&=&\bar{\omega}_{W^{\gamma }}^{E_{k}}X.
\end{eqnarray*}
\end{proof}

\begin{proposition}
\label{same as conformal}For any unit vector $U$%
\begin{eqnarray*}
\tilde{R}\left( W,X,X,W\right) &=&\bar{R}\left( W^{\gamma },X,X,W^{\gamma
}\right) \\
\tilde{R}\left( U,X,X,W\right) &=&\bar{R}\left( U,X,X,W^{\gamma }\right) +%
\mathrm{\ }O\left( C^{0}\right) \max \left\{ \left\vert \bar{\omega}%
_{X}^{k}\left( X\right) \right\vert ,\left\vert \bar{\omega}_{X}^{k}\left(
W^{\gamma }\right) \right\vert \right\} ,\text{ and } \\
\tilde{R}\left( U,W^{\gamma },W^{\gamma },X\right) &=&\bar{R}\left(
U,W^{\gamma },W^{\gamma },X\right) +\mathrm{\ }O\left( C^{0}\right) \max
\left\{ \left\vert \bar{\omega}_{W^{\gamma }}^{k}\left( W^{\gamma }\right)
\right\vert ,\left\vert \bar{\omega}_{X}^{k}\left( W^{\gamma }\right)
\right\vert \right\}
\end{eqnarray*}%
\newline
where $O\left( C^{0}\right) $ represents a quantity that is smaller than a
constant times the difference in the $C^{0}$ norms of $\tilde{g}$ and $\bar{g%
}.$
\end{proposition}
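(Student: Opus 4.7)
The plan is to run Cartan's structural equations on both sides in parallel and match them term by term using the two preceding propositions: Proposition~\ref{z-alpha vanishing curv} to kill all contributions mixing $\mathcal{X}$ and $\mathcal{A}$, and the immediately preceding proposition to convert each surviving $\tilde{\omega}$--component into the corresponding $\bar{\omega}$--component with $W$ replaced by $W^\gamma$. Identity (1) should come out exactly; in (2) and (3) the only extra ingredient is that $U$ can have a component in the $C^0$--perturbed orthogonal complement of $\mathrm{span}\{X, \mathcal{G}, \mathcal{A}\}$, which produces the stated error terms.

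First I would prove (1). Decompose $W = W^{\alpha} + W^{\gamma}$ with $W^{\alpha}\in\mathcal{A}$ and $W^{\gamma}\in\mathcal{G}$, and use multilinearity to split $\tilde{R}(W,X,X,W)$ into four pieces. Since $\tilde{g}(U^{\alpha},\cdot)=g(U^{\alpha},\cdot)$ for every $U^{\alpha}\in\mathcal{A}$, the Section~2 proposition shows that the flat totally geodesic tori from hypothesis~(2) survive under $\tilde{g}$, so $\tilde{R}(W^{\alpha},X)X=0$ and the terms $\tilde{R}(W^{\alpha},X,X,W^{\alpha})$ and $\tilde{R}(W^{\alpha},X,X,W^{\gamma})$ vanish. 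The remaining cross term $\tilde{R}(W^{\gamma},X,X,W^{\alpha})$ is killed by Proposition~\ref{z-alpha vanishing curv}: after Cartan expansion, every surviving piece factors through a form $\tilde{\omega}_{X}^{W^{\alpha}}$ or $\tilde{\omega}_{X}^{E_{k}}(U^{\alpha})$, all of which are zero. What remains is $\tilde{R}(W^{\gamma},X,X,W^{\gamma})$, and each Cartan piece of it---the $d\tilde{\omega}$ derivatives $X(\tilde{\omega}_{X}^{E_{k}}(W^{\gamma}))$ and $W^{\gamma}(\tilde{\omega}_{X}^{E_{k}}(X))$, the wedge products $\tilde{\omega}_{p}^{l}\wedge\tilde{\omega}_{k}^{p}$, and the bracket term $\tilde{\omega}([W^{\gamma},X])$---converts to the corresponding piece of $\bar{R}(W^{\gamma},X,X,W^{\gamma})$ via the four connection--form identifications of the immediately preceding proposition. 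The bracket term behaves well because hypotheses (2) and (6) force $[W,X]=[W^{\gamma},X]\in\mathcal{G}$, a distribution on which $\tilde{g}$ and $\bar{g}$ agree exactly.

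For (2) and (3), the same splitting $W=W^{\alpha}+W^{\gamma}$ again removes the $W^{\alpha}$--contributions exactly by the argument just given, and the novelty is that $U$ need not live in $\mathrm{span}\{X,\mathcal{G},\mathcal{A}\}$. Writing $U=U_{1}+U_{2}$ with $U_{1}\in\mathrm{span}\{X,\mathcal{G},\mathcal{A}\}$ and $U_{2}$ in its orthogonal complement, the $U_{1}$--portion is handled exactly as in (1), while the $U_{2}$--portion is the only place where $\tilde{g}$ and $\bar{g}$ genuinely disagree. Each Cartan piece involving $U_{2}$ accrues a factor comparable to $\|\tilde{g}-\bar{g}\|_{C^{0}}$ multiplied by a connection form on the remaining slots; once one tallies which slots are which, the surviving forms are exactly $\bar{\omega}_{X}^{k}(X)$ and $\bar{\omega}_{X}^{k}(W^{\gamma})$ in (2) (since the other two slots are occupied by $X$, $X$, and $W^{\gamma}$), and $\bar{\omega}_{W^{\gamma}}^{k}(W^{\gamma})$ and $\bar{\omega}_{X}^{k}(W^{\gamma})$ in (3).

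The main obstacle I anticipate is bookkeeping: each Cartan expansion spawns many $d\omega$ and $\omega\wedge\omega$ pieces, and each must be either matched term by term to its $\bar{\omega}$ counterpart or absorbed into the $O(C^{0})$ error. A secondary subtlety is verifying that the relevant Lie brackets---particularly $[X,W^{\alpha}]$, $[X,W^{\gamma}]$, and $[X,U]$---land in distributions where $\tilde{g}$ and $\bar{g}$ either agree or differ in a controlled way, so that no hidden $\tilde{\omega}([Y,Z])$ contribution escapes the tally.
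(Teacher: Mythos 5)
Your overall frame --- Cartan's structure equations, the two preceding propositions, and an $O\left( C^{0}\right) $ error coming only from the component of $U$ in the perturbed complement of $\mathrm{span}\left\{ X,\mathcal{G},\mathcal{A}\right\} $ --- is the same as the paper's, and your account of where the error terms in the second and third identities come from is essentially right. But two of your steps fail as written. First, you dispose of the $W^{\alpha }$--contributions by claiming that the Section 2 proposition shows the hypothesis--(2) flats survive under $\tilde{g}$, so that $\tilde{R}\left( W^{\alpha },X\right) X=0.$ That proposition requires $\tilde{g}\left( T,\cdot \right) =g\left( T,\cdot \right) $ for \emph{every} vector $T$ tangent to the flat, and here the tangent direction $X$ lies in $\mathrm{span}\left\{ X,\mathcal{G}\right\} ,$ on which $\tilde{g}$ is rescaled by $e^{2f}$ with $f$ nonconstant; so its hypothesis is violated and the conclusion is not available this way. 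The paper never asserts $\tilde{R}\left( W^{\alpha },X\right) X=0$; it kills the $W^{\alpha }$--pieces term by term using ingredients your plan does not mention and which are the substance of the proof, not bookkeeping: extensions chosen so that $\left[ W^{\alpha },X\right] =0$; the fact that $X$ is initially a geodesic field and $\mathrm{grad}f\in \mathcal{X},$ so $\bar{\nabla}_{X}X\in \mathrm{span}\left\{ X,\mathrm{grad}f\right\} $ and $\bar{\omega}_{X}^{k}\left( X\right) $ survives only for $E_{k}\in \mathcal{X}$ (only then does Proposition \ref{z-alpha vanishing curv} finish the wedge terms, e.g. $\tilde{\omega}_{k}^{W^{\gamma }}\left( W^{\alpha }\right) \bar{\omega}_{X}^{k}\left( X\right) $ is not of the form you list); and the identity $D_{W^{\alpha }}\bar{\omega}_{X}^{U}\left( X\right) =0,$ which uses $X$ geodesic, $W^{\alpha }\perp \mathrm{grad}f,$ and a constant--angle condition on $U.$

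Second, your assertion that the first identity ``comes out exactly'' by converting each Cartan piece through the four connection--form identifications is too quick. In the wedge term $\sum_{k}\tilde{\omega}_{k}^{W}\left( W\right) \bar{\omega}_{X}^{k}\left( X\right) $ the only contributing directions besides $X$ lie along $\mathrm{grad}f$ inside $\mathcal{X},$ hence in the orthogonal complement of $\mathrm{span}\left\{ X,\mathcal{G},\mathcal{A}\right\} $ --- precisely where $\tilde{g}$ and $\bar{g}$ differ by the unspecified $C^{0}$ change --- and none of the four stated identities (whose lower indices are $X,$ $W,$ or $W^{\gamma }$) converts $\tilde{\omega}_{k}^{W}\left( W\right) $ there. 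The paper obtains exactness by the chain $\tilde{\omega}_{\frac{\nabla f}{\left\vert \nabla f\right\vert }}^{W}\left( W\right) =\tilde{\omega}_{\frac{\nabla f}{\left\vert \nabla f\right\vert }}^{W^{\gamma }}\left( W^{\gamma }\right) =\bar{\omega}_{\frac{\nabla f}{\left\vert \nabla f\right\vert }}^{W^{\gamma }}\left( W^{\gamma }\right) ,$ the first equality from Proposition \ref{z-alpha vanishing curv} and the second from the Koszul argument behind the preceding proposition, together with $d\bar{\omega}_{X}^{W^{\alpha }}\left( W^{\gamma },X\right) =0.$ Without these steps (and without the $D_{W^{\alpha }}\bar{\omega}_{X}^{U}\left( X\right) =0$ point applied in the case $U=W$), neither the exact vanishing of your cross terms nor the exactness of the first identity is established.
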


\begin{proof}
Using the previous two propositions we have

\begin{eqnarray*}
\tilde{R}\left( W,X,X,U\right) &=&d\tilde{\omega}_{X}^{U}\left( W,X\right)
+\dsum \tilde{\omega}_{k}^{U}\wedge \tilde{\omega}_{X}^{k}\left( W,X\right)
\\
&=&d\tilde{\omega}_{X}^{U}\left( W,X\right) +\dsum \tilde{\omega}%
_{k}^{U}\left( W\right) \bar{\omega}_{X}^{k}\left( X\right) -\tilde{\omega}%
_{k}^{U}\left( X\right) \bar{\omega}_{X}^{k}\left( W^{\gamma }\right)
\end{eqnarray*}

Since $X$ and $W^{a}$ initially span a totally geodesic flat, we can choose
our extension of $W$ so that $\left[ W^{a},X\right] =0.$ Using this, the
previous proposition and the hypothesis $\left[ W^{\gamma },X\right] \in 
\mathcal{G}$ we have

\begin{eqnarray*}
d\tilde{\omega}_{X}^{U}\left( W,X\right) &=&D_{W}\tilde{\omega}%
_{X}^{U}\left( X\right) -D_{X}\tilde{\omega}_{X}^{U}\left( W\right) -\tilde{%
\omega}_{X}^{U}\left[ W^{\gamma },X\right] \\
&=&D_{W^{\alpha }}\bar{\omega}_{X}^{U}\left( X\right) +D_{W^{\gamma }}\bar{%
\omega}_{X}^{U}\left( X\right) -D_{X}\bar{\omega}_{X}^{U}\left( W^{\gamma
}\right) -\bar{\omega}_{X}^{U}\left[ W^{\gamma },X\right] .
\end{eqnarray*}

Since $X$ is initially a geodesic field and $W^{\alpha }$ is perpendicular
to the gradient of $f$, $D_{W^{\alpha }}\bar{\omega}_{X}^{U}\left( X\right)
=0,$ as long as $U$ makes a constant angle with $\mathrm{grad}f.$ So with
such a choice of $U$ we have 
\begin{eqnarray*}
d\tilde{\omega}_{X}^{U}\left( W,X\right) &=&D_{W^{\gamma }}\bar{\omega}%
_{X}^{U}\left( X\right) -D_{X}\bar{\omega}_{X}^{U}\left( W^{\gamma }\right) -%
\bar{\omega}_{X}^{U}\left[ W^{\gamma },X\right] \\
&=&d\bar{\omega}_{X}^{U}\left( W^{\gamma },X\right) .
\end{eqnarray*}

Since $X$ is initially a geodesic field, $\bar{\nabla}_{X}X\in \mathrm{span}%
\left\{ X,\nabla f\right\} .$ Thus $\bar{\omega}_{X}^{k}\left( X\right) $ is
only nonzero for $E_{k}\in \mathcal{X}.$ By the first proposition of this
section, we have that for such $E_{k},$ $\tilde{\omega}_{k}^{U}\left(
W^{\alpha }\right) =0,$ and hence 
\begin{equation*}
\tilde{\omega}_{k}^{U}\left( W\right) \bar{\omega}_{X}^{k}\left( X\right) =%
\tilde{\omega}_{k}^{U}\left( W^{\gamma }\right) \bar{\omega}_{X}^{k}\left(
X\right) .
\end{equation*}

The Koszul formula then gives us that 
\begin{equation*}
\left\vert \tilde{\omega}_{k}^{U}\left( W^{\gamma }\right) -\bar{\omega}%
_{k}^{U}\left( W^{\gamma }\right) \right\vert \leq O\left( C^{0}\right)
\end{equation*}%
and 
\begin{equation*}
\left\vert \tilde{\omega}_{k}^{U}\left( X\right) -\bar{\omega}_{k}^{U}\left(
X\right) \right\vert \leq O\left( C^{0}\right) .
\end{equation*}

Combining these displays, give us the second equation, and a similar
argument gives us the third equation.

For $U=W$ , the first proposition of this section gives us $d\bar{\omega}%
_{X}^{W^{\alpha }}\left( W^{\gamma },X\right) =0,$ so 
\begin{equation*}
d\bar{\omega}_{X}^{W}\left( W^{\gamma },X\right) =d\bar{\omega}%
_{X}^{W^{\gamma }}\left( W^{\gamma },\zeta \right) .
\end{equation*}%
We also have to deal with 
\begin{equation*}
\dsum \tilde{\omega}_{k}^{W}\left( W\right) \bar{\omega}_{X}^{k}\left(
X\right) -\tilde{\omega}_{k}^{W}\left( X\right) \bar{\omega}_{X}^{k}\left(
W^{\gamma }\right) .
\end{equation*}%
The previous proposition gives us $\tilde{\omega}_{k}^{W}\left( X\right) =%
\bar{\omega}_{k}^{W^{\gamma }}\left( X\right) .$

We also have $\bar{\omega}_{X}^{k}\left( X\right) =\left\langle \bar{\nabla}%
_{X}X,E_{k}\right\rangle $ and $\bar{\nabla}_{X}X\in \mathrm{span}\left\{ X,%
\mathrm{grad}f\right\} .$ The previous proposition gives us $\tilde{\omega}%
_{X}^{W}\left( W\right) =\bar{\omega}_{X}^{W}\left( W^{\gamma }\right) .$
Since $\bar{\omega}_{X}^{W^{\alpha }}\left( W^{\gamma }\right) =0,$ we
conclude that $\tilde{\omega}_{X}^{W}\left( W\right) =\bar{\omega}%
_{X}^{W^{\gamma }}\left( W^{\gamma }\right) .$ The first proposition of the
section gives us 
\begin{equation*}
\tilde{\omega}_{\frac{\nabla f}{\left\vert \nabla f\right\vert }}^{W}\left(
W\right) =\tilde{\omega}_{\frac{\nabla f}{\left\vert \nabla f\right\vert }%
}^{W^{\gamma }}\left( W^{\gamma }\right)
\end{equation*}%
and the second gives us 
\begin{equation*}
\tilde{\omega}_{\frac{\nabla f}{\left\vert \nabla f\right\vert }}^{W^{\gamma
}}\left( W^{\gamma }\right) =\bar{\omega}_{\frac{\nabla f}{\left\vert \nabla
f\right\vert }}^{W^{\gamma }}\left( W^{\gamma }\right) .
\end{equation*}
So in any case we have, 
\begin{equation*}
\tilde{\omega}_{k}^{W}\left( W\right) \bar{\omega}_{X}^{k}\left( X\right) =%
\bar{\omega}_{k}^{W^{\gamma }}\left( W^{\gamma }\right) \bar{\omega}%
_{X}^{k}\left( X\right) .
\end{equation*}

Combining displays we have 
\begin{eqnarray*}
\tilde{R}\left( W,X,X,W\right) &=&d\bar{\omega}_{X}^{W^{\gamma }}\left(
W^{\gamma },X\right) +\dsum \bar{\omega}_{k}^{W^{\gamma }}\left( W^{\gamma
}\right) \bar{\omega}_{X}^{k}\left( X\right) -\omega _{k}^{W^{\gamma
}}\left( X\right) \bar{\omega}_{X}^{k}\left( W^{\gamma }\right) \\
&=&\bar{R}\left( W^{\gamma },X,X,W^{\gamma }\right) .
\end{eqnarray*}
\end{proof}

\section{Long Term Cheeger Principle}

In the presence of a group of isometries, $G,$ a method for perturbing the
metric on a manifold, $M$, of nonnegative sectional curvature is proposed in 
\cite{Cheeg}. Various special cases of this method were first studied in 
\cite{Berg3} and \cite{BourDesSent}. An exposition can be found in \cite%
{Muet}. Although this technique has been used repeatedly in the literature,
our impression is that it is not widely understood.

To understand the effect of a Cheeger deformation on the curvature of a
nonnegatively curved manifold, in our view, it is crucial to exploit the
\textquotedblleft Cheeger reparametrization\textquotedblright\ of the
Grassmannian. We will review the definition of the Cheeger reparametrization
below. For now we recall (see e.g. \cite{PetWilh1})

\begin{proposition}
\label{Cheeger nonnegative}Let $\left( M,g_{\mathrm{Cheeg}}\right) $ be a
Cheeger deformation by $G$ of the nonnegatively curved manifold $\left(
M,g\right) $. Then modulo the Cheeger reparametrization,

\begin{description}
\item[1] If a plane $P$ is positively curved with respect to $g,$ then it is
positively curved with respect to $g_{\mathrm{Cheeg}}.$

\item[2] If a plane $P$ has a nondegenerate projection onto the orbits of $G$
and \textquotedblleft corresponds\textquotedblright\ to a positively curved
plane in $G,$ then $P$ is positively curved with respect to $g_{\mathrm{Cheeg%
}}.$
\end{description}
\end{proposition}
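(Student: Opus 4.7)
The plan is to realize $(M, g_{\mathrm{Cheeg}})$ as the base of a Riemannian submersion $\pi : (M \times G,\, g \oplus \tfrac{1}{l} g_G) \longrightarrow (M, g_{\mathrm{Cheeg}})$, where $G$ acts diagonally on the total space and the quotient is identified with $M$ in the standard way. I would interpret the Cheeger reparametrization of a plane $P \subset T_p M_{\mathrm{Cheeg}}$ as follows: form its horizontal lift $\tilde P \subset T_{(p,e)}(M \times G)$, and project $\tilde P$ separately onto $T_p M$ and onto $T_e G \cong \mathfrak{g}$, obtaining ``shadow'' planes $\hat P_M$ and $\hat P_G$. The hypotheses of (1) and (2) are then read as hypotheses on $\hat P_M$ and $\hat P_G$ respectively; this is the role played by the phrase ``modulo the Cheeger reparametrization''.

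First I would invoke O'Neill's horizontal curvature formula
\[
\sec_{g_{\mathrm{Cheeg}}}(P) \;=\; \sec_{g \oplus g_G/l}(\tilde P) \;+\; \tfrac{3}{4}\,|A_{\tilde X}\tilde Y|^2
\]
for an orthonormal basis $\{\tilde X, \tilde Y\}$ of $\tilde P$. Since $G$ with a bi-invariant metric is nonnegatively curved, so is the product, and this formula already yields nonnegative curvature for $g_{\mathrm{Cheeg}}$. Next I would exploit the product structure on the total space: writing $\tilde X = X + K$ and $\tilde Y = Y + L$ with $X, Y \in T_p M$ and $K, L \in \mathfrak{g}$, all mixed curvature terms in a Riemannian product vanish, giving
\[
R^{g \oplus g_G/l}(\tilde X, \tilde Y, \tilde Y, \tilde X) \;=\; R^g(X, Y, Y, X) \;+\; \tfrac{1}{l}\, R^G(K, L, L, K).
\]
Both summands on the right are nonnegative, so strict positivity of \emph{either} one is enough to force $\sec_{g \oplus g_G/l}(\tilde P) > 0$, and then O'Neill upgrades this to $\sec_{g_{\mathrm{Cheeg}}}(P) > 0$.

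Conclusion (1) is the case where $\hat P_M = \mathrm{span}(X,Y)$ is positively curved for $g$, which forces $X, Y$ to be linearly independent and the first term above to be strictly positive; conclusion (2) is the case where $\hat P_G = \mathrm{span}(K, L)$ is nondegenerate and positively $g_G$-curved, making the second term strictly positive. The main obstacle I expect is the linear-algebraic bookkeeping around the reparametrization: one must identify the vertical distribution of $\pi$ as $\{(K^*_p,\, -K) : K \in \mathfrak{g}\}$, so that the phrase ``projection onto the orbits'' in (2) really corresponds to $\hat P_G$ being nondegenerate, and verify that the horizontal lift of a $2$-plane is itself a $2$-plane under the nondegeneracy actually assumed. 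Once this identification is pinned down, the two displays above deliver both conclusions immediately.
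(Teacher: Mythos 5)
Your proposal is correct and follows essentially the same route as the paper: realize $g_{\mathrm{Cheeg}}$ as the base of the submersion from the product $G\times M$, use O'Neill to reduce positivity of the reparametrized plane to positivity of its horizontal lift, and use the product splitting of the curvature tensor (both summands nonnegative since $g$ and the biinvariant metric are nonnegatively curved) so that positivity of either the $M$-shadow or the $G$-shadow suffices. The only cosmetic difference is that you run the Cheeger reparametrization backwards (lifting a plane of the deformed metric and projecting) rather than hatting a plane of the original metric and pushing down, which is the same correspondence.
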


The meaning of \textquotedblleft corresponds\textquotedblright\ will be
explained below.

In this section we will discuss a generalization of this result to manifolds
that do not necessarily have nonnegative curvature. This result is used in 
\cite{PetWilh2}.

\begin{proposition}
\label{Cheeger arbitrary}Let $\left( M,g_{\mathrm{Cheeg}}\right) $ be a
Cheeger deformation by $G$ of $\left( M,g\right) $. Then modulo the Cheeger
reparametrization,

\begin{description}
\item[1] If a plane $P$ is positively curved with respect to $g,$ then it is
positively curved with respect to $g_{\mathrm{Cheeg}}.$

\item[2] If a plane $P$ has a nondegenerate projection onto the orbits of $G$
and \textquotedblleft corresponds\textquotedblright\ to a positively curved
plane in $G,$ then $P$ is positively curved with respect to a Cheeger
deformed metric, provided the Cheeger deformation is \textquotedblleft run
for a sufficiently long time\textquotedblright .
\end{description}
\end{proposition}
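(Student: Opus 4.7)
The plan is to view the Cheeger deformation as the base of a Riemannian submersion from a product and then read off the relevant curvature term-by-term. Concretely, realize $g_{\mathrm{Cheeg}}$ as the quotient of $(M\times G,\,g + \lambda g_G)$ by the diagonal isometric action $h\cdot(p,k)=(hp,kh^{-1})$, where $g_G$ is biinvariant on $G$ and $\lambda>0$ is the Cheeger parameter. The horizontal lift of a plane $P=\mathrm{span}\{X,Y\}\subset T_pM_{\mathrm{Cheeg}}$ is a plane $\tilde P\subset T_{(p,e)}(M\times G)$ whose spanning vectors split as $\tilde X=(\tilde X_M,\tilde X_G)$ and $\tilde Y=(\tilde Y_M,\tilde Y_G)$; the Cheeger reparametrization of $P$ is the plane $\mathrm{span}\{\tilde X_M,\tilde Y_M\}\subset T_pM$.

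O'Neill's formula together with the product structure of $g+\lambda g_G$ then yields
\[
R^{g_{\mathrm{Cheeg}}}(X,Y,Y,X) \;=\; R^{g}(\tilde X_M,\tilde Y_M,\tilde Y_M,\tilde X_M) \;+\; \lambda\,R^{g_G}(\tilde X_G,\tilde Y_G,\tilde Y_G,\tilde X_G) \;+\; 3\,|A(\tilde X,\tilde Y)|^2.
\]
Since $g_G$ is biinvariant, $R^{g_G}(U,V,V,U)=\tfrac14|[U,V]|^2\ge 0$, and the $A$-tensor term is automatically nonnegative. Crucially, this identity is purely a product-metric and submersion calculation; it nowhere uses nonnegativity of $g$, so it is exactly the bookkeeping that underlies Proposition \ref{Cheeger nonnegative}. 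Part~1 then follows immediately: if the reparametrized plane $\mathrm{span}\{\tilde X_M,\tilde Y_M\}$ is positively curved with respect to $g$, the first summand is strictly positive while the remaining two are nonnegative, so $R^{g_{\mathrm{Cheeg}}}(X,Y,Y,X)>0$.

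For part~2, the nondegenerate-orbit-projection hypothesis guarantees that $\tilde X_G$ and $\tilde Y_G$ are linearly independent, and ``corresponds to a positively curved plane in $G$'' translates to $[\tilde X_G,\tilde Y_G]\neq 0$, so the middle summand $\tfrac{\lambda}{4}|[\tilde X_G,\tilde Y_G]|^2$ is strictly positive. I would then argue that running the Cheeger deformation for a sufficiently long time forces this middle summand to dominate the possibly negative first summand: the reparametrization $X\mapsto \tilde X_M$ is governed by a $\lambda$-dependent endomorphism that stays uniformly bounded, so $R^{g}(\tilde X_M,\tilde Y_M,\tilde Y_M,\tilde X_M)$ remains uniformly bounded as the Cheeger time varies, whereas the $G$-component of the horizontal lift scales so that the middle summand can be driven arbitrarily large by an appropriate choice of $\lambda$. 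For long enough Cheeger time, the three-term sum is strictly positive.

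The hard part will be the quantitative tracking of the $\lambda$-dependence of the horizontal-lift splitting and of the normalization factor $|\tilde X\wedge\tilde Y|^2/|X\wedge Y|^2$, together with verifying that the $G$-curvature summand genuinely grows relative to the $M$-curvature summand rather than both decaying in concert in the long-time limit. Once that monotonicity is made quantitative, choosing the Cheeger parameter is straightforward, and the argument recovers Proposition \ref{Cheeger nonnegative} part~2 as the degenerate case in which the first summand is already nonnegative and no long-time hypothesis is needed.
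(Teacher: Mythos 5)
Your framework is the same one the paper uses: realize the Cheeger metric as the base of a Riemannian submersion from the product of $(M,g)$ with a scaled biinvariant metric on $G$, lift horizontally, and split the Gray--O'Neill curvature into an $M$--term, a biinvariant $G$--term, and a nonnegative $A$--term; your Part 1 coincides with the paper's one-line proof. The genuine gap is in Part 2, where you defer exactly the step that \emph{is} the paper's proof: the quantitative dependence of the horizontal lift on the deformation parameter. You never pin down whether ``running for a long time'' means $\lambda \to 0$ or $\lambda \to \infty$, you appeal to a ``uniformly bounded $\lambda$--dependent endomorphism'' to control the $M$--term, and you flag as unresolved the possibility that the $G$--term and the $M$--term ``decay in concert''. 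Without that scaling computation the positivity claim in Part 2 is unsupported.

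Here is how the paper closes it, together with the bookkeeping choice you should copy: do not fix a plane downstairs in the deformed metric and project it; fix $v,w \in T_{p}M$ once and for all and track their lifts as the scale varies. If $\hat{v}=\left( k_{v},v\right) $ is the horizontal lift at scale $1$, then at scale $l$ (biinvariant metric scaled by $l$; ``long time'' is $l\rightarrow 0$, since $l=\infty $ recovers $g$) the lift is $\left( k_{v}/l^{2},v\right) $: the $G$--component blows up like $l^{-2}$ while the $M$--component is literally independent of $l$. Hence the $M$--summand $\mathrm{curv}_{M}\left( v,w\right) $ is a constant, while the $G$--summand is $\mathrm{curv}_{G,l}\left( k_{v}/l^{2},k_{w}/l^{2}\right) =\frac{1}{l^{6}}\mathrm{curv}_{G,1}\left( k_{v},k_{w}\right) $, which is positive by the hypotheses (nondegenerate orbit projection plus correspondence to a positively curved plane in $G$, i.e. $\left[ k_{v},k_{w}\right] \neq 0$) and tends to infinity as $l\rightarrow 0$; dropping the nonnegative $A$--term, the sum is positive once $l$ is small. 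In your parametrization ($\lambda $ playing the role of $l^{2}$), the horizontality equation $\lambda \,b\left( k_{v},k\right) =g\left( v,k_{M}\left( p\right) \right) $ shows $\tilde{X}_{G}\sim \lambda ^{-1}$, so your middle summand behaves like $\lambda ^{-3}$ and is driven large by making $\lambda $ \emph{small}; with the vectors fixed upstairs there is no boundedness of a reparametrizing endomorphism left to verify, because the $M$--term does not move at all.
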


The meaning of \textquotedblleft run for a sufficiently long
time\textquotedblright\ will also be explained below.

To explain these results we offer a review that is sufficient for our
purposes. None of this review is original, and in fact some of it is copied
verbatim from \cite{PetWilh1}, whose main contribution to the theory of
Cheeger deformations is expository.

If $G$ is a compact group of isometries of $M$, then we let $G$ act on $%
G\times M$ by 
\begin{equation}
g(p,m)=(pg^{-1},gm).  \label{skew action}
\end{equation}%
If we endow $G$ with a biinvariant metric and $G\times M$ with the product
metric, then the quotient of (\ref{skew action}) is a new metric on $M$. It
was observed in \cite{Cheeg}, that in a certain sense we may expect the new
metric to have more curvature and less symmetry than the original metric.
The \textquotedblleft sense\textquotedblright\ in which this is true is
modulo the Cheeger reparametrization.

The quotient map for the action (\ref{skew action}) is 
\begin{equation*}
q_{G\times M}:(g,m)\mapsto gm.
\end{equation*}%
The vertical space for $q_{G\times M}$ at $(g,m)$ is 
\begin{equation*}
\mathcal{V}_{q_{G\times M}}=\{(-k,k)\ |\ k\in \mathfrak{g}\}
\end{equation*}%
where the $-k$ in the first factor stands for the value at $g$ of the
Killing field on $G$ given by the circle action 
\begin{equation*}
(\exp (tk),g)\mapsto g\exp (-kt)
\end{equation*}%
and the $k$ in the second factor is the value of the Killing field 
\begin{equation*}
m\longmapsto \frac{d}{dt}\exp (tk)m
\end{equation*}%
on $M$ at $m$.

We recall from \cite{Cheeg}, \cite{PetWilh1} that there is a
reparametrization of the tangent space, that we will call the Cheeger
reparametrization. It is given by

\begin{equation*}
v\longmapsto Dq_{G\times M}\left( \hat{v}\right)
\end{equation*}%
where 
\begin{equation*}
\hat{v}\equiv \left( k_{v},v\right)
\end{equation*}%
is the vector tangent to $G\times M$ that is horizontal for $q_{G\times
M}:G\times M\longrightarrow M$, and projects to $v$ under $\pi _{2}:$ $%
G\times M\longrightarrow M.$

From now on we will assume that the metric on the $G$--factor in $G\times M$
is biinvariant. This means that we have only a one parameter family $\left(
M,g_{l}\right) _{l\in \mathbb{R}}$ of Cheeger deformed metrics, where $l$
denotes the scale of the biinvariant metric in $G\times M$. As $l\rightarrow
\infty ,$ $\left( M,g_{l}\right) $ converges to the metric on the $M$ factor
in $G\times M,$ so we will often call the original metric $g_{\infty }$ \cite%
{Pet}.

With an understanding of the Cheeger re-parameterization the proof of
Proposition \ref{Cheeger nonnegative} is now clear. $Dq_{G\times M}\left( 
\hat{P}\right) $ is positively curved if $\hat{P}$ is positively curved, and 
$\hat{P}$ is positively curved if its projection onto either $M$ or $G$ is
positively curved. Since the projection onto $M$ is $P,$ we get the
conclusion of Proposition \ref{Cheeger nonnegative}.

The proof of Proposition \ref{Cheeger arbitrary} is only a little harder. If 
$P$ happens to be positively curved, then so is $\hat{P}$ and hence also $%
Dq_{G\times M}\left( \hat{P}\right) .$

On the other hand, if 
\begin{eqnarray*}
\hat{P} &=&\mathrm{span}\left\{ \hat{v},\hat{w}\right\} \\
&=&\mathrm{span}\left\{ \left( k_{v},v\right) ,\left( k_{w},w\right) \right\}
\end{eqnarray*}%
when $l=1,$ then for arbitrary $l,$ 
\begin{equation*}
\hat{P}=\mathrm{span}\left\{ \left( \frac{k_{v}}{l^{2}},v\right) ,\left( 
\frac{k_{w}}{l^{2}},w\right) \right\} .
\end{equation*}%
So 
\begin{eqnarray*}
\mathrm{curv}_{\left( M,g_{l}\right) }\left( Dq_{G\times M}\left( \frac{k_{v}%
}{l^{2}},v\right) ,Dq_{G\times M}\left( \frac{k_{w}}{l^{2}},w\right) \right)
&\geq &\mathrm{curv}_{G,l}\left( \frac{k_{v}}{l^{2}},\frac{k_{w}}{l^{2}}%
\right) +\mathrm{curv}_{M}\left( v,w\right) \\
&=&\frac{1}{l^{6}}\mathrm{curv}_{G,1}\left( k_{v},k_{w}\right) +\mathrm{curv}%
_{M}\left( v,w\right)
\end{eqnarray*}%
where $\mathrm{curv}_{G,l}$ stands for the curvature with respect to the
biinvariant metric with scale $l,$ and $\mathrm{curv}_{G,1}$ stands for the
curvature with respect to the biinvariant metric with scale $1.$ Thus if $%
\mathrm{curv}_{G,1}\left( k_{v},k_{w}\right) $ happens to be positive, then
the term $\frac{1}{l^{6}}\mathrm{curv}_{G,1}\left( k_{v},k_{w}\right) $ will
dominate the term $\mathrm{curv}_{M}\left( v,w\right) ,$when $l$ is
sufficiently small, and we conclude that 
\begin{equation*}
\mathrm{curv}_{\left( M,g_{l}\right) }\left( Dq_{G\times M}\left( \frac{k_{v}%
}{l^{2}},v\right) ,Dq_{G\times M}\left( \frac{k_{w}}{l^{2}},w\right) \right)
>0.
\end{equation*}

The utility of using the Cheeger reparametrization is undeniable. As we have
seen, it provides a simple way to track changes of curvature. It also
preserves horizontal spaces of Riemannian submersions, \cite{PetWilh1}.

\begin{proposition}
\label{new horizontal space} Let $A_{H}:H\times M\longrightarrow M$ be an
action that is by isometries with respect to both $g_{\infty }$ and $g_{l}$.
Let $\mathcal{H}_{A_{H}}$ denote the distribution of vectors that are
perpendicular to the orbits of $A_{H}$.

Then $u$ is in $\mathcal{H}_{A_{H}}$ with respect to $g_{\infty }$ if and
only if $Dq_{G\times M}(\hat{u})$ is in $\mathcal{H}_{A_{H}}$ with respect
to $g_{l}$. In fact, 
\begin{equation*}
g_{\infty }\left( u,w\right) =g_{l}\left( u,Dq_{G\times M}\left( \hat{w}%
\right) \right)
\end{equation*}%
for all $u,w\in TM.$
\end{proposition}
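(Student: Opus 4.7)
The plan is to prove the explicit inner product identity $g_\infty(u,w) = g_l(u, Dq_{G\times M}(\hat{w}))$ first, since the characterization of $\mathcal{H}_{A_H}$ will then follow by polarization and the symmetry of the metrics.

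First I would unpack $Dq_{G\times M}(\hat{w})$ concretely. Since $q_{G\times M}(g,m) = gm$, differentiation at $(e,m)$ gives $Dq_{G\times M}(K,Y) = Y + K_m$, where $K_m$ denotes the value at $m$ of the Killing field on $M$ generated by $K \in \mathrm{Lie}(G)$. Applied to the $q_{G\times M}$-horizontal lift $\hat{w} = (k_w,w)$, this yields $Dq_{G\times M}(\hat{w}) = w + (k_w)_m$. Moreover, the definition of the horizontal space in $(G\times M, l^2 g_G + g_\infty)$ together with the form of $\mathcal{V}_{q_{G\times M}}$ recalled in the text rearranges to the characterization
$$g_\infty(w, k_m) \;=\; l^2 \langle k_w, k\rangle_G \quad \text{for all } k \in \mathrm{Lie}(G).$$

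Next I would evaluate $g_l(u, Dq_{G\times M}(\hat{w}))$ by writing $u$ itself in reparametrized form, $u = Dq_{G\times M}(\hat{u'}) = u' + (k_{u'})_m$, which is possible because the Cheeger reparametrization is a bijection of $TM$. The Riemannian submersion property of $q_{G\times M}$ then expands the left side as
$$g_l(Dq_{G\times M}(\hat{u'}),\, Dq_{G\times M}(\hat{w})) \;=\; l^2\langle k_{u'}, k_w\rangle_G + g_\infty(u', w),$$
while the target $g_\infty(u,w) = g_\infty(u',w) + g_\infty((k_{u'})_m, w)$ matches this after applying the horizontal lift characterization above with $k = k_{u'}$, converting $g_\infty((k_{u'})_m, w)$ into $l^2\langle k_{u'}, k_w\rangle_G$. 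This establishes the identity.

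For the ``if and only if'' statement about $\mathcal{H}_{A_H}$, symmetry of $g_l$ applied to the identity gives $g_\infty(u,w) = g_l(Dq_{G\times M}(\hat{u}), w)$. Letting $w$ range over the values at $m$ of the Killing fields generating $A_H$ — which are the same underlying vectors regardless of which metric on $M$ one uses — the orthogonality conditions $u \perp_{g_\infty} \mathrm{orbit}$ and $Dq_{G\times M}(\hat{u}) \perp_{g_l} \mathrm{orbit}$ become literally the same set of linear equations. The main obstacle is purely bookkeeping: one must carefully distinguish the vector $u \in TM$, its horizontal lift $\hat{u} \in T(G\times M)$, and the reparametrized vector $Dq_{G\times M}(\hat{u}) = u + (k_u)_m \in TM$, and resist conflating them. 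Once $Dq_{G\times M}(\hat{w})$ is written explicitly and the horizontal lift condition is viewed as a pairing identity, every step is formal.
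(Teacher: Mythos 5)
Your proof is correct, and it rests on the same underlying mechanism as the paper's: treat $q_{G\times M}:G\times M\longrightarrow \left( M,g_{l}\right) $ as a Riemannian submersion from the product metric and exploit the horizontality of $\hat{w}$. The difference is in how $u$ is handled. The paper lifts $u$ as $\left( 0,u\right) -\left( 0,u\right) ^{\mathcal{V}}$, pairs it with $\hat{w}$, and observes that the vertical correction dies against the horizontal vector $\hat{w}$, giving $g_{l}\left( u,Dq_{G\times M}\left( \hat{w}\right) \right) =g_{G\times M}\left( \left( 0,u\right) ,\hat{w}\right) =g_{\infty }\left( u,w\right) $ in two lines; it never needs the explicit formula $Dq_{G\times M}\left( \hat{w}\right) =w+\left( k_{w}\right) _{m}$, the pairing identity $g_{\infty }\left( w,k_{m}\right) =l^{2}\langle k_{w},k\rangle _{G}$, or the bijectivity of the Cheeger reparametrization. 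You instead invert the reparametrization, write $u=Dq_{G\times M}\left( \hat{u}^{\prime }\right) $, expand both sides upstairs, and cancel using the horizontality pairing; this is a longer detour to the same cancellation, but every step checks out, and the surjectivity you invoke does hold (the map $v\longmapsto \hat{v}$ is injective into the horizontal space, which has dimension $\dim M$). Your deduction of the \emph{if and only if} statement from the displayed identity, using symmetry and letting $w$ range over the tangent space to the $A_{H}$--orbit (which is metric independent), is precisely the step the paper leaves implicit, and it is fine.
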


\begin{proof}
Starting with the left and side we take the horizontal lifts to $G\times M$%
\begin{equation*}
g_{l}\left( u,Dq_{G\times M}\left( \hat{w}\right) \right) =g_{G\times
M}\left( \left( 0,u\right) -\left( 0,u\right) ^{\mathcal{V}},\hat{w}\right)
\end{equation*}%
Since $\hat{w}$ is horizontal this becomes 
\begin{eqnarray*}
g_{l}\left( u,Dq_{G\times M}\left( \hat{w}\right) \right) &=&g_{G\times
M}\left( \left( 0,u\right) ,\hat{w}\right) \\
&=&g_{\infty }\left( u,w\right) .
\end{eqnarray*}
\end{proof}

\subsection{Quadratic Nondegeneracy and Cheeger Deformations}

Cheeger deformations and the Cheeger reparametrization also play a role in
verifying the Quadratic Nondegeneracy Condition.

\begin{proposition}
Let $\left( E,g_{\infty }\right) $ be nonnegatively curved and 
\begin{equation*}
q_{H\times E}:H\times \left( E,g_{\infty }\right) \longrightarrow \left(
E,g_{l}\right)
\end{equation*}%
a Cheeger submersion. Let $M$ be as in Theorem 2.4 and be obtained as a
Riemannian submersion 
\begin{equation*}
\pi :\left( E,g_{l}\right) \longrightarrow M.
\end{equation*}

Suppose that $X$ is orthogonal to the orbits of $H$ on $\left( E,g_{\infty
}\right) ,$ $X$, $W,$ $Z$ and $V$ are $\pi $--horizontal with respect to $%
g_{\infty },$ and 
\begin{equation*}
\mathrm{span}\left\{ D\pi \circ Dq_{H\times E}\left( \hat{X}\right) ,D\pi
\circ Dq_{H\times E}\left( \hat{W}\right) \right\}
\end{equation*}%
is one of the zero curvature planes of $M.$ Then the nondegeneracy condition
holds for 
\begin{equation*}
\mathrm{span}\left\{ D\pi \circ Dq_{H\times E}\left( \hat{X}\right) +\sigma
\cdot D\pi \circ Dq_{H\times E}\left( \hat{Z}\right) ,D\pi \circ Dq_{H\times
E}\left( \hat{W}\right) +\tau \cdot D\pi \circ Dq_{H\times E}\left( \hat{V}%
\right) \right\}
\end{equation*}%
if and only if any of the following hold

\begin{itemize}
\item For the original nonnegatively curved metric $g_{\infty }$ 
\begin{equation*}
\sigma ^{2}\mathrm{curv}^{g_{\infty }}\left( Z,W\right) +2\sigma \tau \left(
R^{g_{\infty }}\left( X,W,V,Z\right) +R^{g_{\infty }}\left( X,V,W,Z\right)
\right) +\tau ^{2}\mathrm{curv}^{g_{\infty }}\left( X,V\right) >0.
\end{equation*}

\item \textrm{curv}$^{H}\left( D\pi _{H}\left( \hat{Z}\right) ,D\pi
_{H}\left( \hat{W}\right) \right) >0,$ where $\pi _{H}:H\times
G\longrightarrow H$ is projection to the $H$--factor.

\item $\left\vert \tau \cdot A_{\hat{X}}^{q_{H\times E}}\hat{V}+\sigma \cdot
A_{\hat{Z}}^{q_{H\times E}}\hat{W}\right\vert ^{2}>0.$

\item $\left\vert \tau \cdot A_{Dq_{H\times E}\left( \hat{X}\right) }^{\pi
}Dq_{H\times E}\left( \hat{V}\right) +\sigma \cdot A_{Dq_{H\times E}\left( 
\hat{Z}\right) }^{\pi }Dq_{H\times E}\left( \hat{W}\right) \right\vert
^{2}>0.$
\end{itemize}
\end{proposition}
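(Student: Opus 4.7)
The plan is to use O'Neill's formula twice (once for the Cheeger submersion $q_{H\times E}$ and once for $\pi$) to decompose the quadratic form appearing in the quadratic nondegeneracy condition on $M$ into a sum of four nonnegative quadratic forms in $(\sigma,\tau)$, one matching each of the four listed alternatives. First I would view $p = \pi \circ q_{H\times E}:(H\times E,\ g_H \oplus g_\infty) \longrightarrow M$ as a single Riemannian submersion, and check that $\hat X,\hat W,\hat Z,\hat V$ are all $p$-horizontal: the $q_{H\times E}$-horizontality is built into the hats, while the $\pi$-horizontality of their $Dq_{H\times E}$-images follows from the $\pi$-horizontality of $X,W,Z,V$ with respect to $g_\infty$ combined with the characterization of new horizontal spaces for Cheeger deformations (Proposition \ref{new horizontal space}).

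Next I would expand $\mathrm{curv}^M\!\bigl(Dp(\hat X + \sigma\hat Z),\ Dp(\hat W + \tau\hat V)\bigr)$ and extract its quadratic part $Q^M(\sigma,\tau)$. Sequential O'Neill gives a relation of the schematic form
\begin{equation*}
R^M = R^{H\times E}(\hat{\,\cdot\,},\ldots) + 3|A^q|^2\text{-correction} + 3|A^\pi|^2\text{-correction},
\end{equation*}
and the product-metric identity
\begin{equation*}
R^{H\times E}\!\bigl((a_1,e_1),(a_2,e_2),(a_3,e_3),(a_4,e_4)\bigr) = R^H(a_1,a_2,a_3,a_4) + R^E(e_1,e_2,e_3,e_4)
\end{equation*}
splits the $H\times E$ contribution into an $E$-piece and an $H$-piece. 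The $E$-piece is exactly the $g_\infty$ quadratic form appearing in the first bullet, the $H$-piece reduces on the zero plane to the form $\sigma^2\,\mathrm{curv}^H(k_Z,k_W)+\cdots$ in the second bullet, and the two $A$-tensor corrections contribute $3\bigl|\sigma A^q_{\hat Z}\hat W + \tau A^q_{\hat X}\hat V\bigr|^2$ and $3\bigl|\sigma A^\pi_{Dq\hat Z}Dq\hat W + \tau A^\pi_{Dq\hat X}Dq\hat V\bigr|^2$, matching the third and fourth bullets. The cancellation that isolates these as the genuine quadratic terms is that $A^q_{\hat X}\hat W$ and $A^\pi_{Dq\hat X}Dq\hat W$ both vanish, since the $(X,W)$-plane has zero $M$-curvature and every summand in the O'Neill decomposition of a nonneg curvature quantity is separately nonnegative.

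Each of the four summands is a nonnegative quadratic form in $(\sigma,\tau)$: the first two from nonnegativity of $g_\infty$ and biinvariance of $g_H$, the last two because they are squared norms. Hence $Q^M(\sigma,\tau) > 0$ at a given $(\sigma,\tau) \in S^1$ if and only if at least one of the four summands is strictly positive there, which is precisely the stated four-way equivalence.

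The main obstacle I expect is the bookkeeping in the middle paragraph: one has to carefully separate the A-tensor of the composite $p$ into contributions from $A^q$ and $A^\pi$, verify that cross-terms between the two O'Neill applications feed only into higher-order (cubic and quartic) terms that are irrelevant to the quadratic-nondegeneracy form, and reconcile the identification $D\pi_H(\hat Z) = k_Z$ with the $H$-component of the product-metric splitting. Once this bookkeeping is settled, the decomposition of $Q^M$ and the four-way equivalence both follow at once.
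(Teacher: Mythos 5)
Your proposal is correct and follows essentially the same route as the paper: both arguments decompose the total quadratic term into the four nonnegative pieces (the $g_{\infty }$ quadratic form, the $H$--factor curvature, and the two $A$--tensor squared norms), using the vanishing of $A_{\hat{X}}^{q_{H\times E}}\hat{W}$ and of the corresponding $\pi $--$A$--tensor forced by the zero curvature of the plane downstairs, and then observe that a sum of nonnegative quantities is positive exactly when one summand is. The only cosmetic difference is that you package the two applications of the Gray--O'Neill formula through the composite submersion $p=\pi \circ q_{H\times E}$, while the paper expands $\left\vert A_{X+\sigma Z}\left( W+\tau V\right) \right\vert ^{2}$ directly and discards the cubic and quartic terms, which is the same bookkeeping you flag in your final paragraph.
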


\begin{proof}
For $g_{\infty }$ we have 
\begin{equation*}
P^{g_{\infty }}\left( \sigma ,\tau \right) =\mathrm{curv}^{g_{\infty
}}\left( X+\sigma Z,W+\tau V\right) \geq 0.
\end{equation*}%
The constant and linear terms are $0.$ So the total quadratic term is
nonnegative, otherwise $\left( E,g_{\infty }\right) $ would have a negative
curvature, near $\mathrm{span}\left\{ X,W\right\} .$

Since $\mathrm{curv}\left( D\pi \circ Dq_{H\times E}\left( \hat{X}\right)
,D\pi \circ Dq_{H\times E}\left( \hat{W}\right) \right) =0,$ it follows that 
$A_{\hat{X}}^{q_{H\times E}}\hat{W}=0.$ Therefore, writing $A$ for $%
A^{q_{H\times E}}$ and omitting the hats, 
\begin{eqnarray*}
0 &\leq &\left\langle A_{X+\sigma Z}\left( W+\tau V\right) ,A_{X+\sigma
Z}\left( W+\tau V\right) \right\rangle \\
&=&\tau ^{2}\left\langle A_{X}V,A_{X}V\right\rangle +\sigma \tau
\left\langle A_{X}V,A_{Z}W,\right\rangle +\sigma \tau ^{2}\left\langle
A_{X}V,A_{Z}V\right\rangle +\sigma \tau \left\langle
A_{Z}W,A_{X}V\right\rangle +\sigma ^{2}\tau \left\langle
A_{Z}W,A_{Z}V\right\rangle \\
&&+\sigma ^{2}\left\langle A_{Z}W,A_{Z}W\right\rangle +\sigma \tau
^{2}\left\langle A_{Z}V,A_{X}V\right\rangle +\sigma ^{2}\tau \left\langle
A_{Z}V,A_{Z}W\right\rangle +\sigma ^{2}\tau ^{2}\left\langle
A_{Z}V,A_{Z}V\right\rangle \\
&=&\tau ^{2}\left\langle A_{X}V,A_{X}V\right\rangle +2\sigma \tau
\left\langle A_{X}V,A_{Z}W\right\rangle +\sigma ^{2}\left\langle
A_{Z}W,A_{Z}W\right\rangle \\
&&+2\sigma \tau ^{2}\left\langle A_{X}V,A_{Z}V\right\rangle +2\sigma
^{2}\tau \left\langle A_{Z}W,A_{Z}V\right\rangle +\sigma ^{2}\tau
^{2}\left\langle A_{Z}V,A_{Z}V\right\rangle \\
&=&\left\vert \tau A_{X}V+\sigma A_{Z}W\right\vert ^{2}+2\sigma \tau
^{2}\left\langle A_{X}V,A_{Z}V\right\rangle +2\sigma ^{2}\tau \left\langle
A_{Z}W,A_{Z}V\right\rangle +\sigma ^{2}\tau ^{2}\left\langle
A_{Z}V,A_{Z}V\right\rangle
\end{eqnarray*}%
In particular, the effect of the Cheeger $A$--tensor on the total quadratic
term is nonnegative and given by $\left\vert \tau A_{\hat{X}}\hat{V}+\sigma
A_{\hat{Z}}\hat{W}\right\vert ^{2}.$ The same argument gives that the effect
of the $A$--tensor of $\pi $ on the total quadratic term is nonnegative and
given by $\left\vert \tau A_{Dq_{H\times E}\left( \hat{X}\right) }^{\pi
}Dq_{H\times E}\left( \hat{V}\right) +\sigma A_{Dq_{H\times E}\left( \hat{Z}%
\right) }^{\pi }Dq_{H\times E}\left( \hat{W}\right) \right\vert ^{2}.$

Because $X$ is orthogonal to the orbits of $H,$ the only quadratic term that
can be nonzero in the $H$--factor is \textrm{curv}$^{H}\left( D\pi
_{H}\left( \hat{Z}\right) ,D\pi _{H}\left( \hat{W}\right) \right) .$

Since this is also nonnegative, we have decomposed the total quadratic term
as a sum of four nonnegative quantities. If any one of these quantities is
positive, then the total quadratic term is positive. If on the other hand,
all four quantities are $0,$ then the total quadratic term is $0.$
\end{proof}

\begin{remark}
In light of the main result of [Tapp2], one might expect that the two $A$%
--tensor conditions could be omitted if $\left( E,g_{\infty }\right) $ is a
biinvariant metric on a compact Lie group. However, because the total
quadratic term is not the curvature of a plane, we are not for the moment
aware of how to prove this.
\end{remark}

\section{Curvature Compression Principle}

On the Gromoll--Meyer sphere Cheeger deformations can have a huge
quantitative impact on the formula 
\begin{equation*}
e^{-2f}\mathrm{curv}\left( X,W\right) =s^{4}\left( D_{X}\psi \right)
^{2}+s^{4}\frac{\psi ^{2}}{\left\vert W\right\vert ^{2}}\left( D_{X}\psi
\right) ^{2}+s^{4}\frac{\psi ^{2}}{\left\vert W\right\vert ^{2}}D_{X}\left(
\psi D_{X}\psi \right) -s^{4}I^{\prime \prime }\left\vert W\right\vert
^{2}+O\left( s^{6}\right)
\end{equation*}%
for $\mathrm{curv}\left( X,W\right) $ after a (Tangential Partial) conformal
change (Proposition \ref{curv(X,W)}).

In fact, by running one of our Cheeger deformations for a long time, we
shall see that the vast bulk of the first three of these terms is compressed
into a small neighborhood of the poles of $X.$

What happens to these curvatures is a lot like what happens to $\mathbb{R}%
^{2}$ under the long term Cheeger deformation by the standard $SO\left(
2\right) $--action. The metric becomes a paraboloid that is very flat except
near the fixed point, $\left( 0,0\right) ,$ where there is a lot of
curvature. In this example, the radial field, $\partial _{r},$ plays the
role of our field, $X,$ and the lengths of the circles centered about the
origin play the role of our function $\psi .$

Here we describe an abstraction of what happens on the Gromoll--Meyer
sphere, whose starting point is the fiber scaling theorem, \ref{pos int}.

Let $\pi :\left( M,g_{\infty }\right) \longrightarrow B$ be a Riemannian
submersion. Let $G=G_{1}\times G_{2}$ act isometrically on $M$ and by
symmetries of $\pi .$ Let $g_{\nu ,l}$ be the metric on $M$ obtained by
doing the Cheeger deformation with $G=G_{1}\times G_{2}$ on $M,$ where the
scale on the $G_{1}$ factor in $\left( G_{1}\times G_{2}\right) \times M$ is 
$\nu $ and the scale on the $G_{2}$ factor in $\left( G_{1}\times
G_{2}\right) \times M$ is $l.$

Because of our curvature formula, we are interested in how the length of the 
$\pi $--horizontal part, $W^{\mathcal{H}},$ of a $G_{1}$--Killing field $W$
is affected by Cheeger deforming with $\left( G_{1}\times G_{2}\right) .$ In
the Gromoll--Meyer sphere we will consider the case when $\nu $ is very
small and 
\begin{equation*}
l=O\left( \nu ^{1/3}\right) .
\end{equation*}%
So we adopt these hypotheses for our abstract framework here. We set 
\begin{eqnarray*}
\psi _{\infty } &\equiv &\left\vert W^{\mathcal{H},g_{\infty }}\right\vert
_{g_{\infty }}\text{ and } \\
\psi _{\nu ,l} &\equiv &\left\vert W^{\mathcal{H},g_{\nu ,l}}\right\vert
_{g_{\nu ,l}}.
\end{eqnarray*}%
Our goal is to obtain a formula for $\psi _{\nu ,l}$ in terms of $\psi
_{\infty }.$

\begin{lemma}
\label{psi_nu,l} Let $K_{W}^{1}$ be the Killing field on $G_{1}$ that
corresponds to $W.$ Suppose $W$ lies in the direction of the projection of $%
W^{\mathcal{H},g_{\infty }}$ onto the orbits of $G_{1}$, and that 
\begin{equation*}
\rho =\frac{1}{\left\vert K_{W}^{1}\right\vert _{\mathrm{bi}}}.
\end{equation*}%
Let $K_{W,M}^{2}$ be a vector in the direction of the projection of $W^{%
\mathcal{H},g_{\infty }}$ onto the orbit of $G_{2}$. We normalize $%
K_{W,M}^{2}$ so that $\left\vert K_{W}^{2}\right\vert _{\mathrm{bi}}^{2}=1,$
where $K_{W}^{2}$ is the corresponding Killing field on $G_{2}.$ Then%
\begin{equation*}
\psi _{\nu ,l}^{2}=\frac{\psi _{\infty }^{2}}{\rho ^{2}\frac{\psi _{\infty
}^{2}}{^{\nu ^{2}}}+\frac{\varphi _{\infty }^{4}}{l^{2}\psi _{\infty }^{2}}+1%
}
\end{equation*}%
where 
\begin{equation*}
\varphi _{\infty }^{4}\equiv \left\langle K_{W,M}^{2},W^{\mathcal{H}%
,g_{\infty }}\right\rangle ^{2}.
\end{equation*}
\end{lemma}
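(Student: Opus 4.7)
Write $H := W^{\mathcal H, g_\infty}$, so $W = H + V$ with $V$ the $g_\infty$-vertical part and $\psi_\infty^2 = |H|^2$. Denote by $\mu_i : \mathfrak g_i \to T_m M$, $K \mapsto K^*_m$, the infinitesimal action of $G_i$ at $m$, and set $A_i := \mu_i \mu_i^\ast$, where $\mu_i^\ast$ is the adjoint with respect to the biinvariant inner product on $\mathfrak g_i$ and $g_\infty$ on $T_m M$. A direct computation with the Cheeger quotient $(G_1 \times G_2) \times M \to M$, of the kind carried out in the proof of Proposition~\ref{new horizontal space}, yields
\[
g_{\nu, l}(u, v) \;=\; g_\infty\!\left(u,\,\phi^{-1}(v)\right), \qquad \phi \;:=\; I + \tfrac{1}{\nu^2} A_1 + \tfrac{1}{l^2} A_2,
\]
and, by that same Proposition, the $g_{\nu, l}$-$\pi$-horizontal distribution at $m$ equals $\phi(\mathcal H_\infty)$, where $\mathcal H_\infty$ denotes the $g_\infty$-$\pi$-horizontal subspace.

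Next I would find $W^{\mathcal H, g_{\nu, l}}$. Writing it as $\phi(\tilde H)$ with $\tilde H \in \mathcal H_\infty$, the vanishing of the $g_\infty$-horizontal part of $W - \phi(\tilde H)$ amounts to $P^{\mathcal H} \phi(\tilde H) = H$, where $P^{\mathcal H}$ is $g_\infty$-orthogonal projection onto $\mathcal H_\infty$; thus $\tilde H = \Phi^{-1} H$ with $\Phi := P^{\mathcal H}\phi P^{\mathcal H}|_{\mathcal H_\infty}$. Pairing the identity $g_{\nu, l}(\phi(a), \phi(b)) = g_\infty(\phi(a), b)$ with horizontality of $\tilde H$ then gives
\[
\psi_{\nu, l}^2 \;=\; g_{\nu, l}\!\left(\phi(\tilde H), \phi(\tilde H)\right) \;=\; g_\infty(H, \tilde H) \;=\; g_\infty\!\left(H,\Phi^{-1} H\right),
\]
reducing the lemma to a single eigenvalue computation.

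The three geometric hypotheses then feed in one at a time. Because the component of $H$ orthogonal to $T_m(G_1 \cdot m)$ is annihilated by $\mu_1^\ast$, and the hypothesis makes the orbital component of $H$ parallel to $W = (K_W^1)^\ast_m$, isotropy irreducibility of the $G_1$-action at $m$ places $\mu_1^\ast(H) \in \mathrm{span}(K_W^1)$; evaluating against $K_W^1$ and using $g_\infty(H, W) = \psi_\infty^2$ together with $|K_W^1|_{\mathrm{bi}} = 1/\rho$ then gives $\mu_1^\ast(H) = \rho^2 \psi_\infty^2\, K_W^1$. Hence $A_1(H) = \rho^2 \psi_\infty^2\, W$ and, since $P^{\mathcal H} W = H$, $P^{\mathcal H}(A_1 H) = \rho^2 \psi_\infty^2\, H$. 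The exactly parallel argument for $G_2$, using $|K_W^2|_{\mathrm{bi}} = 1$, produces $A_2(H) = \varphi_\infty^2\, K_{W, M}^2$; combined with the alignment $P^{\mathcal H}(K_{W, M}^2) = (\varphi_\infty^2/\psi_\infty^2)\, H$---the $H$-component is forced by $g_\infty(H, K_{W, M}^2) = \varphi_\infty^2$, while the vanishing of the transverse horizontal part is the geometric input of the setup---one gets $P^{\mathcal H}(A_2 H) = (\varphi_\infty^4/\psi_\infty^2)\, H$. Summing, $\Phi H = \bigl(1 + \rho^2 \psi_\infty^2/\nu^2 + \varphi_\infty^4/(l^2 \psi_\infty^2)\bigr)H$, and the stated formula drops out. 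The main obstacle is exactly the alignment of $P^{\mathcal H}(K_{W, M}^2)$ with $H$; everything else is linear algebra within the Cheeger reparametrization.
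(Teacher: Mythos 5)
Your argument is correct and arrives at the paper's formula, but by a genuinely different route. The paper works upstairs: it writes down the $q_{G\times M}$-horizontal lift $\widehat{W^{\mathcal{H},g_{\infty }}}$ componentwise, notes via Proposition \ref{new horizontal space} that its image is $g_{\nu ,l}$-horizontal for $\pi $, sets $\psi _{\nu ,l}=\langle W,Dq_{G\times M}(\widehat{W^{\mathcal{H},g_{\infty }}})/|\widehat{W^{\mathcal{H},g_{\infty }}}|\rangle _{g_{\nu ,l}}=\psi _{\infty }^{2}/|\widehat{W^{\mathcal{H},g_{\infty }}}|$, and computes the norm of the lift. You instead encode the Cheeger deformation downstairs as the operator $\phi =I+\nu ^{-2}A_{1}+l^{-2}A_{2}$ (which is exactly the Cheeger reparametrization, so your identity $g_{\nu ,l}(u,v)=g_{\infty }(u,\phi ^{-1}v)$ is Proposition \ref{new horizontal space} in operator form), derive the general identity $\psi _{\nu ,l}^{2}=g_{\infty }(H,\Phi ^{-1}H)$ with $\Phi =P^{\mathcal{H}}\phi P^{\mathcal{H}}$, and then verify that $H=W^{\mathcal{H},g_{\infty }}$ is an eigenvector of $\Phi $ with eigenvalue $1+\rho ^{2}\psi _{\infty }^{2}/\nu ^{2}+\varphi _{\infty }^{4}/(l^{2}\psi _{\infty }^{2})$; note that $g_{\infty }(H,\phi H)=\psi _{\infty }^{2}\cdot \lambda $ recovers the paper's $|\widehat{W^{\mathcal{H},g_{\infty }}}|^{2}$, so the two computations are dual to each other. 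What your route buys is transparency about where the hypotheses enter: your intermediate formula is valid with no alignment assumptions, and the whole content of the lemma is isolated in the two facts $P^{\mathcal{H}}A_{1}H\parallel H$ and $P^{\mathcal{H}}A_{2}H\parallel H$, i.e.\ $\mu _{1}^{\ast }(H)\in \mathrm{span}(K_{W}^{1})$, $\mu _{2}^{\ast }(H)\in \mathrm{span}(K_{W}^{2})$, and $P^{\mathcal{H}}(K_{W,M}^{2})\parallel H$. The paper uses exactly the same facts, but silently: its componentwise expression for the lift presupposes $\mu _{i}^{\ast }(H)\parallel K_{W}^{i}$, and equating $\psi _{\nu ,l}$ with the pairing against a single unit horizontal vector presupposes the eigenvector property. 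Be aware that your appeal to ``isotropy irreducibility of the $G_{1}$-action'' is an added hypothesis not literally in the lemma; it plays the same role as the paper's implicit reading of ``$W$ lies in the direction of the projection onto the orbits,'' so it does not put you below the paper's level of rigor, but you should state it (and the alignment of $P^{\mathcal{H}}(K_{W,M}^{2})$ with $H$) as standing assumptions of the abstract framework rather than as consequences of the stated hypotheses.
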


\begin{remark}
Note that the above definition of $K_{W,M}^{2}$ does not preclude the
possibility of $K_{W,M}^{2}$ varying from point to point.
\end{remark}

\begin{proof}
We have%
\begin{eqnarray*}
\psi _{\infty } &=&\left\vert W^{\mathcal{H},g_{\infty }}\right\vert
_{g_{\infty }} \\
&=&\left\vert \frac{\left\langle W,W^{\mathcal{H},g_{\infty }}\right\rangle 
}{\left\vert W^{\mathcal{H},g_{\infty }}\right\vert }\right\vert \\
\left\langle W,W^{\mathcal{H},g_{\infty }}\right\rangle &=&\psi _{\infty
}\left\vert W^{\mathcal{H},g_{\infty }}\right\vert =\psi _{\infty }^{2}
\end{eqnarray*}%
Because of the formula%
\begin{equation*}
g_{\infty }\left( u,w\right) =g_{\nu ,l}\left( u,Dq_{G\times M}\left( \hat{w}%
\right) \right) ,
\end{equation*}%
$Dq_{G\times M}\left( \widehat{W^{\mathcal{H},g_{\infty }}}\right) $ is
horizontal with respect to $g_{\nu ,l}.$ So setting $G_{\nu ,l}\equiv \left(
G_{1},\nu \mathrm{bi}\right) \times \left( G_{2},l\mathrm{bi}\right) $%
\begin{eqnarray*}
\psi _{\nu ,l} &\equiv &\left\vert W^{\mathcal{H},g_{\nu ,l}}\right\vert
_{g_{\nu ,l}} \\
&=&\left\vert \left\langle W,Dq_{G\times M}\left( \frac{\widehat{W^{\mathcal{%
H},g_{\infty }}}}{\left\vert \widehat{W^{\mathcal{H},g_{\infty }}}%
\right\vert _{G_{\nu ,l}\times M}}\right) \right\rangle _{g_{\nu
,l}}\right\vert \\
&=&\frac{1}{\left\vert \widehat{W^{\mathcal{H},g_{\infty }}}\right\vert
_{G_{\nu ,l}\times M}}\left\langle W,W^{\mathcal{H},g_{\infty
}}\right\rangle _{g_{\infty }} \\
&=&\frac{\psi _{\infty }^{2}}{\left\vert \widehat{W^{\mathcal{H},g_{\infty }}%
}\right\vert _{G_{\nu ,l}\times M}}
\end{eqnarray*}%
where we use the general formula 
\begin{equation*}
g_{\infty }\left( u,w\right) =g_{l}\left( u,Dq_{G\times M}\left( \hat{w}%
\right) \right)
\end{equation*}%
for the next to last equation.

We have 
\begin{eqnarray*}
\widehat{W^{\mathcal{H},g_{\infty }}} &=&\left( \frac{\left\langle W^{%
\mathcal{H},g_{\infty }},W\right\rangle }{\left\vert W\right\vert
_{M,g_{\infty }}^{2}}\frac{K_{W}^{1}}{\nu ^{2}}\frac{\left\vert W\right\vert
_{M,g_{\infty }}^{2}}{\left\vert K_{W}^{1}\right\vert _{\mathrm{bi}}^{2}},%
\hspace{0.1in}\text{ }\frac{\left\langle W^{\mathcal{H},g_{\infty
}},K_{W,M}^{2}\right\rangle }{\left\vert K_{W,M}^{2}\right\vert
_{M,g_{\infty }}^{2}}\frac{K_{W}^{2}}{l^{2}}\frac{\left\vert
K_{W,M}^{2}\right\vert _{M,g_{\infty }}^{2}}{\left\vert K_{W}^{2}\right\vert
_{\mathrm{bi}}^{2}},\hspace{0.1in}W^{\mathcal{H},g_{\infty }}\right) \\
&=&\left( \frac{K_{W}^{1}}{\nu ^{2}}\frac{\left\langle W^{\mathcal{H}%
,g_{\infty }},W\right\rangle }{\left\vert K_{W}^{1}\right\vert _{\mathrm{bi}%
}^{2}},\hspace{0.1in}\frac{K_{W}^{2}}{l^{2}}\frac{\left\langle W^{\mathcal{H}%
,g_{\infty }},K_{W,M}^{2}\right\rangle }{\left\vert K_{W}^{2}\right\vert _{%
\mathrm{bi}}^{2}},\hspace{0.1in}W^{\mathcal{H}\mathrm{,}g_{\infty }}\right)
\\
&=&\left( \rho ^{2}\left( \frac{K_{W}^{1}}{\nu ^{2}}\left\langle W^{\mathcal{%
H},g_{\infty }},W\right\rangle \right) ,\hspace{0.1in}\frac{K_{W}^{2}}{l^{2}}%
\left\langle W^{\mathcal{H},g_{\infty }},K_{W,M}^{2}\right\rangle ,\hspace{%
0.1in}W^{\mathcal{H},g_{\infty }}\right) .
\end{eqnarray*}%
This gives us 
\begin{eqnarray*}
\left\vert \widehat{W^{\mathcal{H},g_{\infty }}}\right\vert ^{2} &=&\rho ^{2}%
\frac{\left\langle W,W^{\mathcal{H},g_{\infty }}\right\rangle ^{2}}{^{\nu
^{2}}}+\frac{\left\langle K_{W,M}^{2},W^{\mathcal{H},g_{\infty
}}\right\rangle ^{2}}{^{l^{2}}}+\left\vert W^{\mathcal{H},g_{\infty
}}\right\vert ^{2} \\
&=&\rho ^{2}\frac{\psi _{\infty }^{4}}{^{\nu ^{2}}}+\frac{\varphi _{\infty
}^{4}}{^{l^{2}}}+\psi _{\infty }^{2},
\end{eqnarray*}%
and hence%
\begin{eqnarray*}
\psi _{\nu ,l}^{2} &=&\frac{\psi _{\infty }^{4}}{\rho ^{2}\frac{\psi
_{\infty }^{4}}{^{\nu ^{2}}}+\frac{\varphi _{\infty }^{4}}{^{l^{2}}}+\psi
_{\infty }^{2}} \\
&=&\frac{\psi _{\infty }^{2}}{\rho ^{2}\frac{\psi _{\infty }^{2}}{^{\nu ^{2}}%
}+\frac{\varphi _{\infty }^{4}}{l^{2}\psi _{\infty }^{2}}+1}
\end{eqnarray*}
\end{proof}

Straightforward calculation gives us formal derivatives of $\psi _{\nu ,l}$
in some unspecified direction

\begin{proposition}
\begin{equation*}
\psi _{\nu ,l}^{\prime }=\left( \psi _{\infty }^{\prime }-\frac{2\varphi
_{\infty }^{3}}{^{l^{2}}}\left( \frac{\varphi _{\infty }}{\psi _{\infty }}%
\right) ^{\prime }\right) \frac{\psi _{\nu ,l}^{3}}{\psi _{\infty }^{3}}
\end{equation*}%
\begin{eqnarray*}
\psi _{\nu ,l}^{\prime \prime } &=&\left( \psi _{\infty }^{\prime \prime }-%
\frac{6\varphi _{\infty }^{2}\varphi _{\infty }^{\prime }}{^{l^{2}}}\left( 
\frac{\varphi _{\infty }}{\psi _{\infty }}\right) ^{\prime }-\frac{2\varphi
_{\infty }^{3}}{^{l^{2}}}\left( \frac{\varphi _{\infty }}{\psi _{\infty }}%
\right) ^{\prime \prime }\right) \frac{\psi _{\nu ,l}^{3}}{\psi _{\infty
}^{3}} \\
&&-3\frac{\psi _{\infty }^{\prime }}{\psi _{\infty }}\left( \psi _{\infty
}^{\prime }-\frac{2\varphi _{\infty }^{3}}{^{l^{2}}}\left( \frac{\varphi
_{\infty }}{\psi _{\infty }}\right) ^{\prime }\right) \frac{\psi _{\nu
,l}^{3}}{\psi _{\infty }^{3}} \\
&&+3\left( \psi _{\infty }^{\prime }-\frac{2\varphi _{\infty }^{3}}{^{l^{2}}}%
\left( \frac{\varphi _{\infty }}{\psi _{\infty }}\right) ^{\prime }\right)
^{2}\frac{\psi _{\nu ,l}^{5}}{\psi _{\infty }^{6}}
\end{eqnarray*}
\end{proposition}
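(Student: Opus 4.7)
The proposition asserts a formula for $\psi_{\nu,l}'$ and $\psi_{\nu,l}''$ in terms of $\psi_\infty$, $\varphi_\infty$, and their derivatives, starting from the closed-form expression for $\psi_{\nu,l}^2$ supplied by Lemma~\ref{psi_nu,l}. Since the statement is purely a calculus identity (no geometric content beyond the input formula), my plan is to differentiate, then reorganize into the stated shape.

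The cleanest starting point is to take reciprocals in Lemma~\ref{psi_nu,l}, writing
\begin{equation*}
\frac{1}{\psi_{\nu,l}^{2}} \;=\; \frac{\rho^{2}}{\nu^{2}} \;+\; \frac{\varphi_{\infty}^{4}}{l^{2}\psi_{\infty}^{4}} \;+\; \frac{1}{\psi_{\infty}^{2}}.
\end{equation*}
The advantage is that the $\rho^{2}/\nu^{2}$ term becomes a constant, so after one differentiation only the $l^{-2}$ and $\psi_{\infty}^{-2}$ summands contribute. A direct chain-rule computation gives
\begin{equation*}
-\frac{2\psi_{\nu,l}'}{\psi_{\nu,l}^{3}} \;=\; \frac{4\varphi_{\infty}^{3}\varphi_{\infty}'}{l^{2}\psi_{\infty}^{4}} \;-\; \frac{4\varphi_{\infty}^{4}\psi_{\infty}'}{l^{2}\psi_{\infty}^{5}} \;-\; \frac{2\psi_{\infty}'}{\psi_{\infty}^{3}},
\end{equation*}
and the algebraic key is to recognize that
$\varphi_{\infty}'/\psi_{\infty} - \varphi_{\infty}\psi_{\infty}'/\psi_{\infty}^{2} = (\varphi_{\infty}/\psi_{\infty})'$,
which collects the two $l^{-2}$ terms into $\tfrac{2\varphi_{\infty}^{3}}{l^{2}}(\varphi_{\infty}/\psi_{\infty})'$. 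Multiplying through by $\psi_{\nu,l}^{3}/2$ and dividing by $\psi_{\infty}^{3}$ produces exactly the claimed first formula.

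For $\psi_{\nu,l}''$ I would apply the product rule to $\psi_{\nu,l}' = A\cdot B$ with
$A := \psi_{\infty}' - \tfrac{2\varphi_{\infty}^{3}}{l^{2}}(\varphi_{\infty}/\psi_{\infty})'$ and $B := \psi_{\nu,l}^{3}/\psi_{\infty}^{3}$. Differentiating $A$ by the product rule on $\varphi_{\infty}^{3}(\varphi_{\infty}/\psi_{\infty})'$ yields the first parenthesized bracket in the target, multiplied by $B$. For $B'$, the quotient rule gives
\begin{equation*}
B' \;=\; \frac{3\psi_{\nu,l}^{2}\psi_{\nu,l}'}{\psi_{\infty}^{3}} \;-\; 3\frac{\psi_{\infty}'}{\psi_{\infty}}\,\frac{\psi_{\nu,l}^{3}}{\psi_{\infty}^{3}}.
\end{equation*}
Substituting the already-derived formula $\psi_{\nu,l}' = A\,\psi_{\nu,l}^{3}/\psi_{\infty}^{3}$ into the first summand turns it into $3A\,\psi_{\nu,l}^{5}/\psi_{\infty}^{6}$. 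Adding $A\,B' + A'\,B$ and regrouping gives the three lines in the statement.

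There is no real obstacle here: the computation is mechanical once the reciprocal form of $\psi_{\nu,l}^{2}$ is used. The only point where one could easily make an error is in the bookkeeping of the chain-rule applied to $(\varphi_{\infty}/\psi_{\infty})'$ and in re-expressing $\psi_{\nu,l}'/\psi_{\infty}^{3}$ as $A\,\psi_{\nu,l}^{2}/\psi_{\infty}^{6}$ when computing $B'$. So my plan is simply: (i) take the reciprocal of Lemma~\ref{psi_nu,l}; (ii) differentiate once and apply the identity $(\varphi_{\infty}/\psi_{\infty})' = \varphi_{\infty}'/\psi_{\infty} - \varphi_{\infty}\psi_{\infty}'/\psi_{\infty}^{2}$ to obtain the first formula; (iii) differentiate once more via the product rule on $A\cdot B$, using step (ii) to eliminate $\psi_{\nu,l}'$ on the right-hand side, and collect terms.
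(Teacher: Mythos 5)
Your computation is correct and is exactly the approach the paper intends: the paper offers no written proof, introducing the proposition as a ``straightforward calculation'' of the formal derivatives of the expression for $\psi_{\nu,l}^{2}$ in Lemma \ref{psi_nu,l}, and your reciprocal-form differentiation, the identity $\left(\varphi_{\infty}/\psi_{\infty}\right)'=\varphi_{\infty}'/\psi_{\infty}-\varphi_{\infty}\psi_{\infty}'/\psi_{\infty}^{2}$, and the product-rule step with $A$ and $B$ reproduce both displayed formulas. (One trivial slip in your commentary: $\psi_{\nu,l}'/\psi_{\infty}^{3}$ equals $A\,\psi_{\nu,l}^{3}/\psi_{\infty}^{6}$, not $A\,\psi_{\nu,l}^{2}/\psi_{\infty}^{6}$, but your actual substitution yielding $3A^{2}\psi_{\nu,l}^{5}/\psi_{\infty}^{6}$ is correct.)
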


For the remainder of the section, we restrict our attention to a curve $%
\gamma :\left[ 0,\frac{\pi }{4}\right] \longrightarrow M$ on which 
\begin{eqnarray*}
&&\left\vert \frac{\varphi _{\infty }}{\psi _{\infty }}\right\vert \text{ is
bounded} \\
\psi _{\infty }^{\prime \prime } &\leq &0, \\
\psi _{\infty }\left( 0\right) &=&\psi _{\infty }^{\prime \prime }\left(
0\right) =0, \\
&&\left( \frac{\varphi _{\infty }}{\psi _{\infty }}\right) ^{\prime }\text{
is bounded, and} \\
\psi _{\infty }^{\prime }\left( 0\right) &=&\varphi _{\infty }^{\prime
}\left( 0\right) =1.
\end{eqnarray*}%
The next result gives a quantitative description of how $\left( \psi _{\nu
,l}^{\prime }\right) ^{2}$ is compressed as $\nu \rightarrow 0.$

\begin{proposition}
\label{Compression of psi'}For $\nu $ sufficiently small and $l=O\left( \nu
^{1/3}\right) $, we have 
\begin{equation*}
\left. \left( \psi _{\nu ,l}^{\prime }\right) ^{2}\right\vert _{\left[ 0,\nu %
\right] }\geq \frac{97}{100}\frac{1}{\rho ^{2}+1},
\end{equation*}%
\begin{equation*}
\left. \left( \psi _{\nu ,l}^{\prime }\right) ^{2}\right\vert _{\left[ \nu
^{\beta },\frac{\pi }{4}\right] }\leq O\left( \nu ^{\frac{14}{3}-6\beta
}\right) ,
\end{equation*}%
for any fixed $\beta <\frac{7}{9}.$
\end{proposition}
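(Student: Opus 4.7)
The plan is to reduce both inequalities to a direct analysis of the closed form for $\psi'_{\nu,l}$ from the preceding proposition. Writing
$$
\psi'_{\nu,l} = \frac{N}{D^{3/2}}, \qquad N \equiv \psi'_\infty - \frac{2\varphi_\infty^3}{l^2}\left(\frac{\varphi_\infty}{\psi_\infty}\right)', \qquad D \equiv 1 + \rho^2 \frac{\psi_\infty^2}{\nu^2} + \frac{\varphi_\infty^4}{l^2 \psi_\infty^2},
$$
so that $(\psi'_{\nu,l})^2 = N^2/D^3$, I will control $N$ and $D$ separately on $[0,\nu]$ and on $[\nu^\beta,\pi/4]$, using the standing hypotheses $\psi_\infty(0)=\psi''_\infty(0)=0$, $\psi'_\infty(0)=\varphi'_\infty(0)=1$, $\psi''_\infty \le 0$, and the boundedness of $\varphi_\infty/\psi_\infty$ and $(\varphi_\infty/\psi_\infty)'$ on $\gamma$.

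For the lower bound on $[0,\nu]$ I first use concavity to get $\psi_\infty(t) \le t$, so $\rho^2 \psi_\infty^2/\nu^2 \le \rho^2$ on this interval. Since $\varphi_\infty \sim t$ and $\psi_\infty \sim t$ near $0$ (using the initial data and the bounded ratio), one has $\varphi_\infty^4/(l^2\psi_\infty^2) = O(t^2/l^2) = O(\nu^{4/3})$ because $l = O(\nu^{1/3})$; similarly the correction term in $N$ is $O(\varphi_\infty^3/l^2) = O(\nu^{7/3})$, and $\psi'_\infty = 1 + O(t^2)$ since $\psi''_\infty(0)=0$. Consequently
$$
D \;\le\; 1 + \rho^2 + O(\nu^{4/3}), \qquad N \;\ge\; 1 - o(1),
$$
so for $\nu$ sufficiently small the pointwise bound $N^2/D^3 \geq \tfrac{97}{100(\rho^2+1)}$ follows from a definite margin in the $o(1)$ corrections.

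For the upper bound on $[\nu^\beta, \pi/4]$ I note that the concavity of $\psi_\infty$ together with $\psi'_\infty(0)=1$ forces $\psi'_\infty \ge \tfrac12$ on some fixed $[0,\delta]$, and since $\psi_\infty$ is increasing on $[0,\pi/4]$ (reaching its maximum there), it is bounded below by a positive constant on $[\delta,\pi/4]$. Thus $\psi_\infty(t) \ge c\,\nu^\beta$ throughout $[\nu^\beta,\pi/4]$ for $\nu$ small, whence
$$
D \;\ge\; \rho^2 \frac{\psi_\infty^2}{\nu^2} \;\ge\; c'\, \nu^{2\beta - 2}, \qquad \frac{1}{D^3} = O(\nu^{6-6\beta}).
$$
On the same interval $\varphi_\infty$, $\psi'_\infty$, and $(\varphi_\infty/\psi_\infty)'$ are bounded, so $|N| \le |\psi'_\infty| + O(1/l^2) = O(\nu^{-2/3})$ and $N^2 = O(\nu^{-4/3})$. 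Multiplying,
$$
(\psi'_{\nu,l})^2 \;\le\; O(\nu^{-4/3}) \cdot O(\nu^{6-6\beta}) \;=\; O(\nu^{14/3 - 6\beta}),
$$
which is a genuinely decaying bound exactly when $\beta < 7/9$, as claimed.

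The principal obstacle is in the lower-bound regime: one must choose "sufficiently small $\nu$" carefully so that the higher-order corrections $O(\nu^{4/3})$ in $D$ and $O(\nu^{7/3})$ in $N$ leave a definite multiplicative margin reaching the specific constant $\tfrac{97}{100}$. This is careful but essentially elementary; the upper bound is comparatively painless once one has pinned down the lower estimate on $\psi_\infty$ via concavity.
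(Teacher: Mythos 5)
Your route is the same as the paper's: write $(\psi'_{\nu,l})^2=N^2D^{-6}$ (your $D$ is the paper's $D^2$), use $l=O(\nu^{1/3})$ to make the correction term in $N$ negligible, bound $D$ from above on $[0,\nu]$ via $\psi_\infty(t)\le t$, and on $[\nu^\beta,\pi/4]$ combine $\psi_\infty\gtrsim\nu^\beta$ with $|N|=O(l^{-2})=O(\nu^{-2/3})$ to get $O(\nu^{6-6\beta})\cdot O(\nu^{-4/3})=O(\nu^{14/3-6\beta})$. Your upper-bound half is correct and matches the paper's computation essentially step for step.

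The gap is in the closing step of your lower bound. From $D\le 1+\rho^2+O(\nu^{4/3})$ and $N\ge 1-o(1)$ you obtain only $N^2/D^3\ge (1-o(1))(1+\rho^2)^{-3}$, and no choice of \textquotedblleft sufficiently small $\nu$\textquotedblright\ upgrades this to $\tfrac{97}{100}(\rho^2+1)^{-1}$: at $t=\nu$ one has $\psi_\infty(t)=t\,(1+o(1))$ (since $\psi'_\infty(0)=1$, $\psi''_\infty(0)=0$), so $D\to 1+\rho^2$ and $(\psi'_{\nu,l})^2\to(1+\rho^2)^{-3}$ there; the deficit is the fixed factor $(1+\rho^2)^2$, not an $o(1)$ margin, so your final claim fails whenever $(1+\rho^2)^2>100/97$, i.e.\ for any $\rho$ that is not very small. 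In fairness, the paper's own proof makes the same elision (its second displayed inequality replaces $D^{-6}$ by essentially $\bigl(\rho^2\psi_\infty^2/\nu^2+1\bigr)^{-1}$, silently dropping the cube), and what the method genuinely delivers --- and all that is used later, namely a $\nu$-independent positive lower bound on $[0,\nu]$ to contrast with the $O(\nu^{14/3-6\beta})$ decay outside --- is a bound of the form $\tfrac{97}{100}(\rho^2+1)^{-3}$. So either state and prove the lower bound with that cubed constant, which your estimates do give, or restrict to small $\rho$; but do not attribute the discrepancy to higher-order corrections in $\nu$, since it is a fixed multiplicative loss independent of $\nu$.
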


\begin{remark}
Keeping in mind that $\rho $ is a fixed \textquotedblleft background
constant\textquotedblright , $\rho =\frac{1}{\left\vert K_{W}^{1}\right\vert
_{\mathrm{bi}}},$ that is independent of $\nu ,$ these formulas tell us that 
$\left( \psi _{\nu ,l}^{\prime }\right) ^{2}$ is large on the small interval 
$\left[ 0,\nu \right] ,$ and then rapidly becomes very small, its generic
order being $\nu ^{\frac{14}{3}}.$
\end{remark}

\begin{proof}
Setting $D^{2}=\rho ^{2}\frac{\psi _{\infty }^{2}}{\nu ^{2}}+\frac{\varphi
_{\infty }^{4}}{l^{2}\psi _{\infty }^{2}}+1$ we have 
\begin{eqnarray*}
\frac{\psi _{\nu ,l}^{2}}{\psi _{\infty }^{2}} &=&\frac{1}{D^{2}} \\
&=&\frac{\nu ^{2}}{\rho ^{2}\psi _{\infty }^{2}+\frac{\nu ^{2}\varphi
_{\infty }^{4}}{l^{2}\psi _{\infty }^{2}}+\nu ^{2}}
\end{eqnarray*}%
So%
\begin{equation*}
\left( \psi _{\nu ,l}^{\prime }\right) ^{2}=\left( \psi _{\infty }^{\prime }-%
\frac{2\varphi _{\infty }^{3}}{^{l^{2}}}\left( \frac{\varphi _{\infty }}{%
\psi _{\infty }}\right) ^{\prime }\right) ^{2}\frac{1}{D^{6}}
\end{equation*}%
On $\left[ 0,\nu \right] $ our hypotheses imply that $\frac{2\varphi
_{\infty }^{3}}{^{l^{2}}}\left( \frac{\varphi _{\infty }}{\psi _{\infty }}%
\right) ^{\prime }$ is much smaller than $\psi _{\infty }^{\prime };$ so on $%
\left[ 0,\nu \right] $ we have 
\begin{eqnarray*}
\left( \psi _{\nu ,l}^{\prime }\right) ^{2} &\geq &\frac{99}{100}\left( \psi
_{\infty }^{\prime }\right) ^{2}\frac{1}{D^{6}} \\
&\geq &\frac{98}{100}\frac{1}{\rho ^{2}\frac{\psi _{\infty }^{2}}{^{\nu ^{2}}%
}+1} \\
&\geq &\frac{97}{100}\frac{1}{\rho ^{2}\frac{t^{2}}{^{\nu ^{2}}}+1} \\
&\geq &\frac{97}{100}\frac{\nu ^{2}}{\rho ^{2}t^{2}+\nu ^{2}} \\
&\geq &\frac{97}{100}\frac{1}{\rho ^{2}+1}
\end{eqnarray*}%
We get the upper estimate on $\left[ \nu ^{\beta },\frac{\pi }{4}\right] $
by using 
\begin{eqnarray*}
\psi _{\infty }^{\prime } &\leq &1, \\
\frac{2\varphi _{\infty }^{3}}{^{l^{2}}}\left( \frac{\varphi _{\infty }}{%
\psi _{\infty }}\right) ^{\prime } &=&O\left( \frac{\psi _{\infty }^{3}}{%
l^{2}}\right)
\end{eqnarray*}%
and 
\begin{eqnarray*}
\frac{1}{D^{6}} &=&\left( \frac{\nu ^{2}}{\rho ^{2}\psi _{\infty }^{2}+\frac{%
\varphi _{\infty }^{4}}{\psi _{\infty }^{2}}\frac{\nu ^{2}}{l^{2}}+\nu ^{2}}%
\right) ^{3} \\
&\leq &O\left( \frac{\nu ^{2}}{\rho ^{2}\psi _{\infty }^{2}+\psi _{\infty
}^{2}\frac{\nu ^{2}}{l^{2}}+\nu ^{2}}\right) ^{3} \\
&\leq &O\left( \frac{\nu ^{2}}{\rho ^{2}\nu ^{2\beta }+\nu ^{2}}\right) ^{3}
\\
&\leq &\left( \frac{\nu ^{2}}{\rho ^{2}\nu ^{2\beta }}\right) ^{3} \\
&\leq &\frac{1}{\rho ^{2}}\nu ^{6\left( 1-\beta \right) }
\end{eqnarray*}%
So%
\begin{eqnarray*}
\left. \left( \psi _{\nu ,l}^{\prime }\right) ^{2}\right\vert _{\left[ \nu
^{\beta },\frac{\pi }{4}\right] } &\leq &O\left( \frac{1}{\rho ^{2}}\frac{%
\nu ^{6\left( 1-\beta \right) }}{l^{4}}\right) \\
&=&O\left( \frac{1}{\rho ^{2}}\nu ^{6\left( 1-\beta \right) -\frac{4}{3}%
}\right) \\
&=&O\left( \nu ^{\frac{14}{3}-6\beta }\right)
\end{eqnarray*}
\end{proof}

The results in the remainder of this section will be used in \cite{PetWilh2}%
, but not in this paper.

\begin{lemma}
In addition to the assumptions above suppose that 
\begin{equation*}
\mathrm{curv}^{M}\left( X,W^{\mathcal{H},g_{\infty }}\right) \geq C_{1}\psi
_{\infty }^{2}
\end{equation*}%
for some positive constant $C_{1}.$

Let $\gamma :\left[ 0,\pi /4\right] \longrightarrow M$ be as above, then for
any $\beta >0$ 
\begin{equation*}
-\left( D_{X}\left( \psi _{\nu ,l}D_{X}\psi _{\nu ,l}\right) \right) >0,
\end{equation*}%
provided $t\geq \frac{\nu }{\sqrt{3}\rho }+\beta \nu ,$ and $\nu $ is
sufficiently small.
\end{lemma}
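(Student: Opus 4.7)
The plan is to reduce the inequality to strict concavity of $\psi_{\nu,l}^2$: since
\[
-D_X\bigl(\psi_{\nu,l}\,D_X\psi_{\nu,l}\bigr) \;=\; -(\psi_{\nu,l}')^2 - \psi_{\nu,l}\psi_{\nu,l}'' \;=\; -\frac{1}{2}(\psi_{\nu,l}^2)'',
\]
it suffices to prove $(\psi_{\nu,l}^2)'' < 0$ for $t \geq \frac{\nu}{\sqrt{3}\rho} + \beta\nu$. I would assemble this second derivative from the formulas for $\psi_{\nu,l}'$ and $\psi_{\nu,l}''$ in the preceding proposition, then group the output as
\[
(\psi_{\nu,l}')^2 + \psi_{\nu,l}\psi_{\nu,l}'' \;=\; \frac{\psi_{\nu,l}^4}{\psi_\infty^4}\left[\psi_\infty\psi_\infty'' + (\psi_\infty')^2\Bigl(4\frac{\psi_{\nu,l}^2}{\psi_\infty^2} - 3\Bigr)\right] + \mathcal{E},
\]
where $\mathcal{E}$ collects every term carrying an explicit factor of $\varphi_\infty^3/l^2$.

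Dropping $\mathcal{E}$ and substituting the simplified identity $\psi_{\nu,l}^2/\psi_\infty^2 = \nu^2/(\nu^2+\rho^2\psi_\infty^2)$ (valid when the $\varphi_\infty$-term in $D^2$ is neglected) turns the bracket into
\[
\psi_\infty\psi_\infty'' + (\psi_\infty')^2\,\frac{\nu^2-3\rho^2\psi_\infty^2}{\nu^2+\rho^2\psi_\infty^2},
\]
which makes the role of the threshold transparent. The curvature hypothesis $\mathrm{curv}^M(X,W^{\mathcal{H},g_\infty}) \geq C_1\psi_\infty^2$, combined with Lemma \ref{R^B ( H, X) X} and the O'Neill inequality $\mathrm{curv}^B \geq \mathrm{curv}^M$ on horizontal planes, yields $-\psi_\infty\psi_\infty'' \geq C_1\psi_\infty^2$, so the first summand is strictly negative. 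For the second, I would use $\psi_\infty(0) = \psi_\infty''(0) = 0$, $\psi_\infty'(0) = 1$ to write $\psi_\infty(t) = t + O(t^3)$; the buffer $\beta\nu$ then guarantees $\rho^2\psi_\infty(t)^2 > \nu^2/3$ with a positive margin once $\nu$ is small, so the numerator $\nu^2-3\rho^2\psi_\infty^2$ has the correct sign. Both summands thus push the simplified bracket strictly negative.

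The main obstacle is showing that $\mathcal{E}$ does not overpower this sign. Here I would exploit the relative comparison
\[
\frac{\varphi_\infty^4/(l^2\psi_\infty^2)}{\rho^2\psi_\infty^2/\nu^2} \;=\; O\!\Bigl(\frac{\nu^2}{l^2}\Bigr) \;=\; O(\nu^{4/3})
\]
together with the boundedness of $|\varphi_\infty/\psi_\infty|$ and $|(\varphi_\infty/\psi_\infty)'|$, which suppress every contribution from $\frac{2\varphi_\infty^3}{l^2}(\varphi_\infty/\psi_\infty)'$ and $\frac{6\varphi_\infty^2\varphi_\infty'}{l^2}(\varphi_\infty/\psi_\infty)'$ by a relative factor vanishing with $\nu$. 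The genuinely delicate point is the single term $\frac{2\varphi_\infty^3}{l^2}(\varphi_\infty/\psi_\infty)''\cdot\frac{\psi_{\nu,l}^4}{\psi_\infty^3}$, whose second derivative $(\varphi_\infty/\psi_\infty)''$ is not directly hypothesized to be bounded; absorbing this term against the margin $-C_1\psi_\infty^2$ from the curvature hypothesis is the step I expect to require the most care. With this handled, taking $\nu$ small enough finishes the proof.
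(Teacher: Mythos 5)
Your reduction to $-\tfrac12(\psi_{\nu,l}^2)''>0$, your regrouping of $(\psi_{\nu,l}')^2+\psi_{\nu,l}\psi_{\nu,l}''$ as $\frac{\psi_{\nu,l}^4}{\psi_\infty^4}\bigl[\psi_\infty\psi_\infty''+(\psi_\infty')^2(4\psi_{\nu,l}^2/\psi_\infty^2-3)\bigr]+\mathcal{E}$, and your identification of the threshold via $\nu^2-3\rho^2\psi_\infty^2$ are all correct, and in the small--$t$ regime this is essentially the paper's own computation. The genuine gap is the treatment of $\mathcal{E}$ away from $t=0$. The terms of $\mathcal{E}$ that enter through the formula for $\psi_{\nu,l}''$ (not only the $\frac{2\varphi_\infty^3}{l^2}(\varphi_\infty/\psi_\infty)''$ term you flag, but also $\frac{6\varphi_\infty^2\varphi_\infty'}{l^2}(\varphi_\infty/\psi_\infty)'$ and the cross term in $-3\frac{\psi_\infty'}{\psi_\infty}(\cdots)$) carry the same prefactor $\psi_{\nu,l}^4/\psi_\infty^3\sim\psi_\infty D^{-4}$ as the margin you retain, so inside the bracket they have size $O(\psi_\infty^3/l^2)$ against a margin of only $C_1\psi_\infty^2$ (plus an $O(1)$ contribution from the second summand). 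The ratio is $O(\psi_\infty/l^2)=O(\psi_\infty\,\nu^{-2/3})$, which blows up as $\nu\to 0$ as soon as $\psi_\infty\gtrsim\nu^{2/3}$; so the claimed suppression ``by a relative factor vanishing with $\nu$'' fails on most of $[0,\pi/4]$, and the $(\varphi_\infty/\psi_\infty)''$ term is in addition bounded by no hypothesis at all. (By contrast, the errors entering through $(\psi_{\nu,l}')^2$ are harmless, because they carry an extra factor $\psi_{\nu,l}^2/\psi_\infty^2=D^{-2}\sim\nu^2/(\rho^2\psi_\infty^2)$.) Absorbing $\mathcal{E}$ against $-C_1\psi_\infty^2$ is therefore not a matter of care; as structured, the comparison is false in the regime where $\psi_\infty$ is of order one.

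The paper avoids this by splitting into two regimes. For $t\leq O(\nu^{1/2})$ it argues exactly as you do, and there the $l^{-2}$--terms really are negligible because $\varphi_\infty=O(t)$ is small. For $t\geq O(\nu^{1/2})$ it never differentiates the formula for $\psi_{\nu,l}'$ at all: it bounds $-\psi_{\nu,l}\psi_{\nu,l}''$ from below geometrically, writing $-\psi_{\nu,l}\psi_{\nu,l}''=\mathrm{curv}^B(X,W^{\mathcal{H},g_{\nu,l}})\geq\mathrm{curv}^M(X,W^{\mathcal{H},g_{\nu,l}})\geq C_1\psi_\infty^2/D^4$ via O'Neill and the Cheeger reparametrization identity $\psi_{\nu,l}^2=\psi_\infty^2/D^2$, and then only has to dominate $(\psi_{\nu,l}')^2\leq\bigl(\psi_\infty'-\frac{2\varphi_\infty^3}{l^2}(\varphi_\infty/\psi_\infty)'\bigr)^2/D^6$, where the extra $D^{-2}$ crushes the $l^{-2}$--errors; the needed inequality reduces to $1\leq O(t^4/\nu^2)$. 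This geometric lower bound on the full second derivative --- applying the curvature hypothesis to the deformed metric rather than only to $\psi_\infty''$ --- is the ingredient your proposal is missing, and without it (or some substitute control on $(\varphi_\infty/\psi_\infty)''$ and on the size of the other $l^{-2}$--terms relative to $C_1\psi_\infty^2$) the argument does not close.
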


\begin{proof}
Because the second derivative of $\psi _{\nu ,l}$ is so complicated, we
divide the proof of the first inequality into the case where $t\geq O\left(
\nu ^{1/2}\right) $ and the case where $t\leq O\left( \nu ^{1/2}\right) .$
Since $\psi _{\infty }\equiv \left\vert W^{\mathcal{H},g_{\infty
}}\right\vert _{g_{\infty }}$ and $\psi _{\nu ,l}\equiv \left\vert W^{%
\mathcal{H},g_{\nu ,l}}\right\vert _{g_{\nu ,l}}$ we have using Proposition %
\ref{R^B ( H, X) X} 
\begin{eqnarray*}
-\psi _{\nu ,l}\psi _{\nu ,l}^{\prime \prime } &=&\mathrm{curv}^{B}\left(
X,W^{\mathcal{H},g_{\nu ,l}}\right) \\
&\geq &\mathrm{curv}^{M}\left( X,W^{\mathcal{H},g_{\nu ,l}}\right) \\
&=&\psi _{\nu ,l}^{2}\mathrm{curv}^{M}\left( X,\frac{W^{\mathcal{H},g_{\nu
,l}}}{\left\vert W^{\mathcal{H},g_{\nu ,l}}\right\vert ^{2}}\right) \\
&=&\psi _{\nu ,l}^{2}\mathrm{curv}^{M}\left( X,Dq_{G\times M}\left( \frac{%
\widehat{W^{\mathcal{H}\mathrm{,}g_{\infty }}}}{\left\vert \widehat{W^{%
\mathcal{H}\mathrm{,}g_{\infty }}}\right\vert ^{2}}\right) \right) \\
&=&\frac{\psi _{\nu ,l}^{2}}{\left\vert \widehat{W^{\mathcal{H},g_{\infty }}}%
\right\vert ^{2}}\mathrm{curv}^{M}\left( X,Dq_{G\times M}\left( \widehat{W^{%
\mathcal{H}\mathrm{,}g_{\infty }}}\right) \right) \\
&\geq &\frac{\psi _{\nu ,l}^{2}}{\left\vert \widehat{W^{\mathcal{H}%
,g_{\infty }}}\right\vert _{G_{\nu ,l}\times M}^{2}}C_{1}\psi _{\infty }^{2}
\end{eqnarray*}

Using $\left\vert \widehat{W^{\mathcal{H},g_{\infty }}}\right\vert ^{2}=\psi
_{\infty }^{2}D^{2}$ and $\psi _{\nu ,l}^{2}=\frac{\psi _{\infty }^{2}}{D^{2}%
}$ we have%
\begin{eqnarray*}
-\psi _{\nu ,l}\psi _{\nu ,l}^{\prime \prime } &\geq &\frac{\psi _{\infty
}^{2}}{D^{2}}\frac{1}{\psi _{\infty }^{2}D^{2}}C_{1}\psi _{\infty }^{2} \\
&=&C_{1}\frac{\psi _{\infty }^{2}}{D^{4}}
\end{eqnarray*}%
and 
\begin{equation*}
\left( \psi _{\nu ,l}^{\prime }\right) ^{2}=\frac{\left( \psi _{\infty
}^{\prime }-\frac{2\varphi _{\infty }^{3}}{^{l^{2}}}\left( \frac{\varphi
_{\infty }}{\psi _{\infty }}\right) ^{\prime }\right) ^{2}}{D^{6}}
\end{equation*}%
So it would be enough to prove%
\begin{equation*}
\left( \psi _{\infty }^{\prime }-\frac{2\varphi _{\infty }^{3}}{^{l^{2}}}%
\left( \frac{\varphi _{\infty }}{\psi _{\infty }}\right) ^{\prime }\right)
^{2}\leq C_{1}\psi _{\infty }^{2}D^{2}
\end{equation*}

For $t\geq O\left( \nu ^{1/2}\right) $ 
\begin{equation*}
\left( \psi _{\infty }^{\prime }-\frac{2\varphi _{\infty }^{3}}{^{l^{2}}}%
\left( \frac{\varphi _{\infty }}{\psi _{\infty }}\right) ^{\prime }\right)
^{2}\leq \left( \psi _{\infty }^{\prime }\right) ^{2}+O\left( \frac{t^{3}}{%
l^{2}}\right) +O\left( \frac{t^{6}}{l^{4}}\right)
\end{equation*}%
and since $D^{2}=\rho ^{2}\frac{\psi _{\infty }^{2}}{^{\nu ^{2}}}+\frac{%
\varphi _{\infty }^{4}}{l^{2}\psi _{\infty }^{2}}+1,$ 
\begin{equation*}
C_{2}t^{2}\left( 1+\rho ^{2}\frac{t^{2}}{\nu ^{2}}\right) \leq C_{1}\psi
_{\infty }^{2}D^{2},
\end{equation*}%
for another positive constant $C_{2}.$ So the desired inequality would
follow from 
\begin{equation*}
\left( \psi _{\infty }^{\prime }\right) ^{2}\leq C_{2}t^{2}\left( 1+\rho ^{2}%
\frac{t^{2}}{\nu ^{2}}\right)
\end{equation*}%
or 
\begin{equation*}
1\leq O\left( \frac{t^{4}}{\nu ^{2}}\right)
\end{equation*}%
or%
\begin{equation*}
t\geq O\left( \nu ^{1/2}\right) .
\end{equation*}

For $t\leq O\left( \nu ^{1/2}\right) ,$

\begin{eqnarray*}
\left( \psi _{\nu ,l}^{\prime }\right) ^{2} &=&\left( \psi _{\infty
}^{\prime }-\frac{2\varphi _{\infty }^{3}}{^{l^{2}}}\left( \frac{\varphi
_{\infty }}{\psi _{\infty }}\right) ^{\prime }\right) ^{2}\frac{\psi _{\nu
,l}^{6}}{\psi _{\infty }^{6}} \\
&\leq &\left( \psi _{\infty }^{\prime }-O\left( \frac{\nu ^{3/2}}{^{l^{2}}}%
\right) \right) ^{2}\frac{\psi _{\nu ,l}^{6}}{\psi _{\infty }^{6}},
\end{eqnarray*}%
\begin{eqnarray*}
\left\vert \psi _{\nu ,l}\psi _{\nu ,l}^{\prime \prime }\right\vert &\geq
&\left\vert \left( \psi _{\infty }^{\prime \prime }-\frac{6\varphi _{\infty
}^{2}\varphi _{\infty }^{\prime }}{^{l^{2}}}\left( \frac{\varphi _{\infty }}{%
\psi _{\infty }}\right) ^{\prime }-\frac{2\varphi _{\infty }^{3}}{^{l^{2}}}%
\left( \frac{\varphi _{\infty }}{\psi _{\infty }}\right) ^{\prime \prime
}\right) \frac{\psi _{\nu ,l}^{4}}{\psi _{\infty }^{3}}\right. \\
&&-3\frac{\psi _{\infty }^{\prime }}{\psi _{\infty }}\left( \psi _{\infty
}^{\prime }-\frac{2\varphi _{\infty }^{3}}{^{l^{2}}}\left( \frac{\varphi
_{\infty }}{\psi _{\infty }}\right) ^{\prime }\right) \frac{\psi _{\nu
,l}^{4}}{\psi _{\infty }^{3}} \\
&&\left. +3\left( \psi _{\infty }^{\prime }-\frac{2\varphi _{\infty }^{3}}{%
^{l^{2}}}\left( \frac{\varphi _{\infty }}{\psi _{\infty }}\right) ^{\prime
}\right) ^{2}\frac{\psi _{\nu ,l}^{6}}{\psi _{\infty }^{6}}\right\vert \\
&\geq &\left\vert \left( \psi _{\infty }^{\prime \prime }-O\left( \frac{t^{2}%
}{^{l^{2}}}\right) -O\left( \frac{t^{3}}{^{l^{2}}}\right) \right) \frac{\psi
_{\nu ,l}^{4}}{\psi _{\infty }^{3}}\right. \\
&&-3\frac{\psi _{\infty }^{\prime }}{\psi _{\infty }}\left( \psi _{\infty
}^{\prime }-O\left( \frac{t^{3}}{^{l^{2}}}\right) \right) \frac{\psi _{\nu
,l}^{4}}{\psi _{\infty }^{3}} \\
&&\left. +3\left( \psi _{\infty }^{\prime }-O\left( \frac{t^{3}}{^{l^{2}}}%
\right) \right) ^{2}\frac{\psi _{\nu ,l}^{6}}{\psi _{\infty }^{6}}\right\vert
\end{eqnarray*}%
Since $t\leq O\left( \nu ^{1/2}\right) $ we conclude that 
\begin{equation*}
\left\vert \psi _{\nu ,l}\psi _{\nu ,l}^{\prime \prime }\right\vert \geq
\left\vert \left( \psi _{\infty }^{\prime \prime }\right) \frac{\psi _{\nu
,l}^{4}}{\psi _{\infty }^{3}}-3\left( \psi _{\infty }^{\prime }\right)
^{2}\left( \frac{\psi _{\nu ,l}^{4}}{\psi _{\infty }^{4}}-\frac{\psi _{\nu
,l}^{6}}{\psi _{\infty }^{6}}\right) \right\vert +O,
\end{equation*}%
where $``O\textquotedblright $\ stands for a quantity that is too small to
matter.

Recalling that $\frac{\psi _{\infty }^{2}}{\psi _{\nu ,l}^{2}}=D^{2}=\rho
^{2}\frac{\psi _{\infty }^{2}}{^{\nu ^{2}}}+\frac{\varphi _{\infty }^{4}}{%
l^{2}\psi _{\infty }^{2}}+1$ we have%
\begin{eqnarray*}
\left( \frac{\psi _{\nu ,l}^{4}}{\psi _{\infty }^{4}}-\frac{\psi _{\nu
,l}^{6}}{\psi _{\infty }^{6}}\right) &=&\frac{1}{D^{4}}-\frac{1}{D^{6}} \\
&=&\frac{D^{2}-1}{D^{6}} \\
&=&\frac{\rho ^{2}\frac{\psi _{\infty }^{2}}{^{\nu ^{2}}}}{D^{6}}+O \\
&=&\frac{\rho ^{2}\psi _{\infty }^{2}}{\nu ^{2}D^{6}}+O
\end{eqnarray*}%
Since $\psi _{\infty }^{\prime \prime }\left( 0\right) =0,$it follows that
for $t\leq O\left( \nu ^{1/2}\right) ,$%
\begin{equation*}
\left\vert \psi _{\nu ,l}\psi _{\nu ,l}^{\prime \prime }\right\vert \geq
3\left( \psi _{\infty }^{\prime }\right) ^{2}\frac{\rho ^{2}\psi _{\infty
}^{2}}{\nu ^{2}D^{6}}+O.
\end{equation*}

Thus our total derivative is positive when 
\begin{eqnarray*}
\left( \psi _{\infty }^{\prime }\right) ^{2}\left( \frac{\psi _{\nu ,l}^{6}}{%
\psi _{\infty }^{6}}\right) &\leq &3\left( \psi _{\infty }^{\prime }\right)
^{2}\frac{\rho ^{2}\psi _{\infty }^{2}}{\nu ^{2}D^{6}}+O,\text{ or} \\
\left( \frac{1}{D^{6}}\right) &\leq &3\rho ^{2}\frac{\psi _{\infty }^{2}}{%
\nu ^{2}D^{6}}+O,\text{ or} \\
\nu &\leq &\sqrt{3}\rho \psi _{\infty }+O,\text{ or }
\end{eqnarray*}%
Since $\psi _{\infty }^{\prime }\left( 0\right) =1,$ $\psi _{\infty
}^{\prime \prime }\left( 0\right) =0$, this is equivalent to%
\begin{equation*}
t\geq \frac{\nu }{\sqrt{3}\rho }+O
\end{equation*}
\end{proof}

\begin{lemma}
\begin{equation}
\left\vert \frac{\psi _{\nu ,l}}{\psi _{\nu ,l}^{\prime \prime }}\left[ \psi
_{\nu ,l}\psi _{\nu ,l}^{\prime }\right] ^{\prime }\right\vert \leq \max
\left\{ \psi _{\nu ,l}^{2},\frac{\nu ^{2}}{3\rho ^{2}}\right\} .
\label{Peter's est}
\end{equation}
\end{lemma}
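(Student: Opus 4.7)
The plan is to expand $[\psi_{\nu,l}\psi_{\nu,l}']' = (\psi_{\nu,l}')^{2} + \psi_{\nu,l}\psi_{\nu,l}''$, giving
\begin{equation*}
\frac{\psi_{\nu,l}}{\psi_{\nu,l}''}\bigl[\psi_{\nu,l}\psi_{\nu,l}'\bigr]' \;=\; \psi_{\nu,l}^{2} \;+\; \frac{\psi_{\nu,l}(\psi_{\nu,l}')^{2}}{\psi_{\nu,l}''}.
\end{equation*}
In the relevant region $\psi_{\nu,l}'' < 0$ (the preceding lemma furnishes this directly, and it will also fall out of the formulas below given the standing hypothesis $\psi_{\infty}'' \leq 0$), so the second summand is non-positive. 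By the elementary inequality $|a - c| \leq \max\{a,c\}$ for $a,c \geq 0$, the proof reduces to showing
\begin{equation*}
\left|\frac{\psi_{\nu,l}(\psi_{\nu,l}')^{2}}{\psi_{\nu,l}''}\right| \;\leq\; \frac{\nu^{2}}{3\rho^{2}}.
\end{equation*}

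For the key substitution, set $P = \psi_{\infty}' - (2\varphi_{\infty}^{3}/l^{2})(\varphi_{\infty}/\psi_{\infty})'$, $M = P' - 3P\psi_{\infty}'/\psi_{\infty}$, and $K = P^{2}/\psi_{\infty}^{3}$. The formulas of the preceding Proposition read
\begin{equation*}
\psi_{\nu,l}' \,=\, P\,\frac{\psi_{\nu,l}^{3}}{\psi_{\infty}^{3}}, \qquad \psi_{\nu,l}'' \,=\, \frac{\psi_{\nu,l}^{3}}{\psi_{\infty}^{3}}\bigl(M + 3K\psi_{\nu,l}^{2}\bigr),
\end{equation*}
so the correction term collapses to $K\psi_{\nu,l}^{4}/(M + 3K\psi_{\nu,l}^{2})$. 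In the clean case when $\varphi_{\infty}$ is constant (so $P = \psi_{\infty}'$), the identity $\psi_{\nu,l}^{2}/\psi_{\infty}^{2} = 1/(1+A\psi_{\infty}^{2})$ with $A = \rho^{2}/\nu^{2}$ gives
\begin{equation*}
M + 3K\psi_{\nu,l}^{2} \,=\, \psi_{\infty}'' \,-\, \frac{3A\psi_{\infty}(\psi_{\infty}')^{2}}{1 + A\psi_{\infty}^{2}} \,\leq\, -\,\frac{3A\psi_{\infty}(\psi_{\infty}')^{2}}{1 + A\psi_{\infty}^{2}},
\end{equation*}
since $\psi_{\infty}'' \leq 0$, and hence
\begin{equation*}
\left|\frac{K\psi_{\nu,l}^{4}}{M + 3K\psi_{\nu,l}^{2}}\right| \,\leq\, \frac{(\psi_{\infty}')^{2}\psi_{\nu,l}^{4}/\psi_{\infty}^{3}}{3A\psi_{\infty}(\psi_{\infty}')^{2}/(1+A\psi_{\infty}^{2})} \,=\, \frac{\psi_{\nu,l}^{4}(1+A\psi_{\infty}^{2})}{3A\psi_{\infty}^{4}} \,=\, \frac{1}{3AD^{2}} \,\leq\, \frac{\nu^{2}}{3\rho^{2}},
\end{equation*}
completing the proof in this special case.

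The main obstacle is to push this clean estimate through in the genuinely variable-$\varphi_{\infty}$ case. Then $P - \psi_{\infty}'$ contributes a term of order $\varphi_{\infty}^{3}/l^{2}$, and $P' - \psi_{\infty}''$ contributes further pieces involving $(\varphi_{\infty}/\psi_{\infty})'$ and $(\varphi_{\infty}/\psi_{\infty})''$, which enter both $M$ and $K$. Using the standing hypotheses ($|\varphi_{\infty}/\psi_{\infty}|$ and $(\varphi_{\infty}/\psi_{\infty})'$ bounded, $\psi_{\infty}(0)=\psi_{\infty}''(0)=0$, $\psi_{\infty}'(0)=\varphi_{\infty}'(0)=1$, $l = O(\nu^{1/3})$), the plan is to verify that these corrections are strictly dominated by the non-positive leading piece $-3A\psi_{\infty}(\psi_{\infty}')^{2}/D^{2}$ of $M + 3K\psi_{\nu,l}^{2}$, thereby preserving both its sign and the quantitative bound $|K\psi_{\nu,l}^{4}/(M+3K\psi_{\nu,l}^{2})| \leq 1/(3AD^{2})$. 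The bookkeeping is elementary but intricate, and constitutes the principal technical content of the proof.
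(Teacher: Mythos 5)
Your algebraic framing is fine, and your ``clean case'' computation is essentially the paper's inner-region computation: keeping the $-3(\psi_{\infty}')^{2}\bigl(\psi_{\nu,l}^{3}/\psi_{\infty}^{4}-\psi_{\nu,l}^{5}/\psi_{\infty}^{6}\bigr)$ part of $\psi_{\nu,l}''$ and using $D^{2}$ gives exactly the bound $\nu^{2}/(3\rho^{2}D^{2})$ that appears in the paper. But the proof is not complete: you explicitly defer the variable-$\varphi_{\infty}$ case as ``the principal technical content,'' and the plan you sketch for it cannot work. You propose to dominate the $\varphi_{\infty}$-corrections by the nonpositive piece $-3A\psi_{\infty}(\psi_{\infty}')^{2}/D^{2}$ on all of $[0,\pi/4]$. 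In the outer region, however, those corrections have size $\varphi_{\infty}^{3}/l^{2}=O(\psi_{\infty}^{3}/l^{2})$, i.e.\ of order $\nu^{-2/3}$ where $\psi_{\infty}$ is of order one (recall $l=O(\nu^{1/3})$), and they carry no controlled sign; meanwhile your dominating term is at most $3(\psi_{\infty}')^{2}/\psi_{\infty}$ and vanishes wherever $\psi_{\infty}'=0$ (e.g.\ near $t=\pi/4$, a maximum of $\psi_{\infty}$). So in the outer region neither the negativity of $\psi_{\nu,l}''$ nor your reduced inequality $\bigl\vert\psi_{\nu,l}(\psi_{\nu,l}')^{2}/\psi_{\nu,l}''\bigr\vert\leq\nu^{2}/(3\rho^{2})$ follows from the standing hypotheses you list; your reduction also demands more than the lemma needs there.

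The paper avoids this with a two-region argument that is the missing idea. For $t\geq\frac{\nu}{\sqrt{3}\rho}+O$ it invokes the preceding lemma (which rests on the additional curvature hypothesis $\mathrm{curv}^{M}(X,W^{\mathcal{H},g_{\infty}})\geq C_{1}\psi_{\infty}^{2}$) to conclude $\left[\psi_{\nu,l}\psi_{\nu,l}'\right]'\leq 0$, hence $\bigl\vert\left[\psi_{\nu,l}\psi_{\nu,l}'\right]'\bigr\vert\leq\bigl\vert\psi_{\nu,l}\psi_{\nu,l}''\bigr\vert$ and the bound $\psi_{\nu,l}^{2}$ outright---no estimate on $\psi_{\nu,l}(\psi_{\nu,l}')^{2}/\psi_{\nu,l}''$ and no control of the $\varphi_{\infty}$ terms is needed there. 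Only on the complementary interval $t\leq\frac{\nu}{\sqrt{3}\rho}+O$, where $\left[\psi_{\nu,l}\psi_{\nu,l}'\right]'$ may be positive, does it bound the expression by $\bigl\vert\tfrac{\psi_{\nu,l}}{\psi_{\nu,l}''}[\psi_{\nu,l}']^{2}\bigr\vert$ (the same step as your max trick) and compute explicitly; there $t=O(\nu)$ forces $\varphi_{\infty}^{3}/l^{2}=O(\nu^{7/3})$, so the clean computation you carried out is legitimate and yields $\nu^{2}/(3\rho^{2}D^{2})\leq\nu^{2}/(3\rho^{2})$. To salvage your single global estimate you would have to import the preceding lemma's quantitative lower bound on $-\psi_{\nu,l}\psi_{\nu,l}''$ in the outer region, rather than trying to extract negativity and domination from $\psi_{\infty}''\leq 0$ and the formulas alone.
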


\begin{proof}
From the previous result{\Huge \ }we have that for $t\geq \frac{\nu }{\sqrt{3%
}\rho }+O$, $\left\vert \left[ \psi _{\nu ,l}\psi _{\nu ,l}^{\prime }\right]
^{\prime }\right\vert \leq \left\vert \psi _{\nu ,l}\psi _{\nu ,l}^{\prime
\prime }\right\vert ,$ so

\begin{equation*}
\left\vert \frac{\psi _{\nu ,l}}{\psi _{\nu ,l}^{\prime \prime }}\left[ \psi
_{\nu ,l}\psi _{\nu ,l}^{\prime }\right] ^{\prime }\right\vert \leq \psi
_{\nu ,l}^{2}
\end{equation*}

For some $t\leq \frac{\nu }{\sqrt{3}\rho }+O$ the above inequality fails,
but then we have 
\begin{equation*}
\left\vert \frac{\psi _{\nu ,l}}{\psi _{\nu ,l}^{\prime \prime }}\left[ \psi
_{\nu ,l}\psi _{\nu ,l}^{\prime }\right] ^{\prime }\right\vert \leq
\left\vert \frac{\psi _{\nu ,l}}{\psi _{\nu ,l}^{\prime \prime }}\left[ \psi
_{\nu ,l}^{\prime }\right] ^{2}\right\vert
\end{equation*}%
Estimating as in Proposition \ref{Compression of psi'} we have that for $%
t\leq \frac{\nu }{\sqrt{3}\rho }+O$ 
\begin{equation*}
\left[ \psi _{\nu ,l}^{\prime }\right] ^{2}\leq \left[ \left( \psi _{\infty
}^{\prime }\right) \frac{\psi _{\nu ,l}^{3}}{\psi _{\infty }^{3}}\right]
^{2}+O
\end{equation*}%
and%
\begin{eqnarray*}
\psi _{\nu ,l}^{\prime \prime } &=&\psi _{\infty }^{\prime \prime }\frac{%
\psi _{\nu ,l}^{3}}{\psi _{\infty }^{3}}-3\left( \psi _{\infty }^{\prime
}\right) ^{2}\left( \frac{\psi _{\nu ,l}^{3}}{\psi _{\infty }^{4}}-\frac{%
\psi _{\nu ,l}^{5}}{\psi _{\infty }^{6}}\right) +O \\
&=&-3\left( \psi _{\infty }^{\prime }\right) ^{2}\left( \frac{\psi _{\nu
,l}^{3}}{\psi _{\infty }^{4}}-\frac{\psi _{\nu ,l}^{5}}{\psi _{\infty }^{6}}%
\right) +O
\end{eqnarray*}%
Using $D^{2}=\rho ^{2}\frac{\psi _{\infty }^{2}}{^{\nu ^{2}}}+\frac{\varphi
_{\infty }^{4}}{l^{2}\psi _{\infty }^{2}}+1$ we have%
\begin{eqnarray*}
\left( \frac{\psi _{\nu ,l}^{3}}{\psi _{\infty }^{4}}-\frac{\psi _{\nu
,l}^{5}}{\psi _{\infty }^{6}}\right) &=&\frac{1}{\psi _{\infty }}\left( 
\frac{1}{D^{3}}-\frac{1}{D^{5}}\right) \\
&=&\frac{1}{\psi _{\infty }}\left( \frac{D^{2}-1}{D^{5}}\right) \\
&=&\frac{\rho ^{2}}{\nu ^{2}}\left( \frac{\psi _{\infty }}{D^{5}}\right) +O
\end{eqnarray*}

So 
\begin{eqnarray*}
\left\vert \frac{\psi _{\nu ,l}}{\psi _{\nu ,l}^{\prime \prime }}\left[ \psi
_{\nu ,l}\psi _{\nu ,l}^{\prime }\right] ^{\prime }\right\vert &\leq
&\left\vert \frac{\psi _{\nu ,l}}{\psi _{\nu ,l}^{\prime \prime }}\left[
\psi _{\nu ,l}^{\prime }\right] ^{2}\right\vert \\
&\leq &\frac{1}{3\left( \psi _{\infty }^{\prime }\right) ^{2}\frac{\rho ^{2}%
}{\nu ^{2}}\left( \frac{\psi _{\infty }}{D^{5}}\right) }\left[ \left( \psi
_{\infty }^{\prime }\right) ^{2}\frac{\psi _{\nu ,l}^{7}}{\psi _{\infty }^{6}%
}\right] \\
&=&\frac{\nu ^{2}D^{5}}{3\rho ^{2}\left( \psi _{\infty }\right) }\left[ 
\frac{1}{D^{6}}\right] \psi _{\nu ,l} \\
&=&\frac{\nu ^{2}D^{5}}{3\rho ^{2}\left( \psi _{\infty }\right) }\left[ 
\frac{1}{D^{6}}\right] \frac{\psi _{\infty }}{D} \\
&=&\frac{\nu ^{2}}{3\rho ^{2}}\left[ \frac{1}{D^{2}}\right]
\end{eqnarray*}%
Since $D^{2}\geq 1,$ we get the desired inequality.
\end{proof}

\section{Synergy}

In Section 4, we detailed an abstract setting for which fiber scaling
produces integrally positive curvature on initially flat totally geodesic
tori. We also explained how, with a few extra hypotheses, this deformation
can be combined with a conformal change to produce positive curvature on a
single initially flat torus. In Section 5, we described an abstract
framework that will allow to use a tangential partial conformal change to
put positive curvature on all the initially flat tori in the Gromoll-Meyer
sphere, simultaneously. However, we are not aware of any way to combine
Cheeger deformations, fiber scaling and tangential partial conformal changes
to put positive curvature on the Gromoll--Meyer sphere. The problem is that
these deformations only produce positive curvature to higher order on the
initially flat tori. In principle, such a deformation could produce positive
curvature, but much more needs to be verified. As far as we can tell this
verification must fail for the Gromoll--Meyer sphere.

We described in Sections 2 and 3 a method, called orthogonal partial
conformal change, that will allow us to change the metric on the
Gromoll--Meyer sphere to one that

\begin{itemize}
\item has nonnegative curvature,

\item the same zero curvatures,
\end{itemize}

\noindent and to which we will be able to apply a combination of Cheeger
deformations, fiber scaling, and partial conformal changes and get positive
curvature.

In this section, we discuss in an abstract setting, how the orthogonal
partial conformal change of Sections 2 and 3 will play a role in making our
problem more solvable. This will involve a synergy between the curvature
compression principle, fiber scaling, and the orthogonal partial conformal
change.

To allow for a slightly less intertwined exposition we will explain this
synergy as it applies to a single torus. This will allow us to use a
conformal change in place of the tangential conformal change.

The addition of the orthogonal partial conformal change will aid us in
verifying the positivity of the curvatures of planes of the form%
\begin{equation*}
\mathrm{span}\left\{ X,W+\tau V\right\} ,
\end{equation*}%
where $V$ is perpendicular to $X$, $W$, and $W^{\mathcal{H}},$ and $\tau \in 
\mathbb{R}.$ It is necessary that such planes have positive curvature, but
of course it is not sufficient.

The curvature of $\mathrm{span}\left\{ X,W+\tau V\right\} $ is a quadratic
polynomial in $\tau $%
\begin{equation*}
Q\left( \tau \right) =\mathrm{curv}\left( X,W\right) +2\tau R\left(
W,X,X,V\right) +\tau ^{2}\mathrm{curv}\left( X,V\right)
\end{equation*}%
whose minimum value is 
\begin{equation*}
\mathrm{curv}\left( X,W\right) -\frac{R\left( W,X,X,V\right) ^{2}}{\mathrm{%
curv}\left( X,V\right) }.
\end{equation*}

\begin{proposition}
Let $M$ be nonnegatively curved and let $X,W$ satisfy the hypotheses of
Section $4$. After scaling the fibers of the Riemannian submersion $\pi $
and performing the conformal change described in subsection 4.1,%
\begin{eqnarray*}
\mathrm{curv}\left( X,W\right) -\frac{R\left( W,X,X,V\right) ^{2}}{\mathrm{%
curv}\left( X,V\right) } &=&s^{4}\left( D_{X}\psi \right) ^{2}+s^{4}\frac{%
\psi ^{2}}{\left\vert W\right\vert ^{2}}\left( D_{X}\psi \right) ^{2}+s^{4}%
\frac{\psi ^{2}}{\left\vert W\right\vert ^{2}}D_{X}\left( \psi D_{X}\psi
\right) -s^{4}I^{\prime \prime }\left\vert W\right\vert ^{2} \\
&&-s^{4}\left( D_{X}\psi \right) ^{2}\frac{\left\langle \frac{W^{\mathcal{H}}%
}{\left\vert W^{\mathcal{H}}\right\vert },A_{X}V\right\rangle ^{2}}{\mathrm{%
curv}\left( X,V\right) }+O\left( s^{6}\right) ,
\end{eqnarray*}%
provided $V$ is perpendicular to $X$, $W$, and $W^{\mathcal{H}},$ and $%
\mathrm{Hess}^{g_{s}}\left( f\right) \left( W,V\right) =0$ where $f$ is as
in Section $4.$
\end{proposition}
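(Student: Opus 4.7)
The plan is to observe that the left-hand side is the minimum value of the quadratic
$$Q(\tau) = \mathrm{curv}^{\tilde{g}}(X, W + \tau V) = \mathrm{curv}^{\tilde{g}}(X,W) + 2\tau R^{\tilde{g}}(W,X,X,V) + \tau^2 \mathrm{curv}^{\tilde{g}}(X,V),$$
so the task reduces to computing $\mathrm{curv}^{\tilde{g}}(X,W)$, $R^{\tilde{g}}(W,X,X,V)$, and $\mathrm{curv}^{\tilde{g}}(X,V)$ to sufficient order. Proposition \ref{curv(X,W)} already supplies the first. Since both fiber scaling and the conformal change perturb the metric only to order $s^2$, we have $\mathrm{curv}^{\tilde{g}}(X,V) = \mathrm{curv}(X,V) + O(s^2)$, which justifies replacing the denominator by $\mathrm{curv}(X,V)$ at the cost of an $O(s^6)$ error once the numerator is seen to be $O(s^4)$. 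The heart of the proof is therefore the computation of $R^{\tilde{g}}(W,X,X,V)$.

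First I would compute $R^{g_s}(W,X,X,V)$ using Lemma \ref{Detlef}. Because $\mathrm{span}\{X,W\}$ is a zero curvature plane in a totally geodesic flat, $R(W,X)X = 0$, so
$$R^{g_s}(W,X)X = s^2 R^B(W^{\mathcal{H}}, X)X + s^2 A_X A_X W^{\mathcal{V}}.$$
By Lemma \ref{R^B ( H, X) X}, $R^B(W^{\mathcal{H}}, X)X$ is a scalar multiple of $W^{\mathcal{H}}$, so its $g_s$-pairing with $V$ vanishes because $V \perp W^{\mathcal{H}}$. For the $A$-tensor term, I would note that $A_X A_X W^{\mathcal{V}}$ is vertical (since $A_X W^{\mathcal{V}}$ is horizontal), apply the skew-symmetry identity $\langle A_X A_X W^{\mathcal{V}}, V \rangle = -\langle A_X W^{\mathcal{V}}, A_X V \rangle$ (picking up a factor $(1-s^2)$ from $g_s$ on vertical vectors), and substitute $A_X W^{\mathcal{V}} = -(D_X\psi/\psi)\, W^{\mathcal{H}}$ from Lemma \ref{Abstract A--tensor}. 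The result is
$$R^{g_s}(W,X,X,V) = s^2(1-s^2)\, D_X\psi \left\langle \frac{W^{\mathcal{H}}}{\psi}, A_X V \right\rangle = s^2\, D_X\psi \left\langle \frac{W^{\mathcal{H}}}{\psi}, A_X V \right\rangle + O(s^4).$$

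Next I would pass from $g_s$ to $\tilde{g} = e^{2f}g_s$ via the standard conformal change formula for the $(0,4)$ curvature tensor. The correction to $R(W,X,X,V)$ is a Kulkarni--Nomizu product of $g_s$ with the tensor $h = \mathrm{Hess}^{g_s} f - df \otimes df + \frac{1}{2}|\mathrm{grad}\,f|^2 g_s$; because $X$, $W$, and $V$ are mutually $g_s$-orthogonal and $X$ is unit, only the single term proportional to $h(W,V)$ survives. The hypothesis $\mathrm{Hess}^{g_s}(f)(W,V) = 0$, the relation $g_s(W,V) = 0$, and the estimate $\mathrm{grad}\,f = O(s^2)$ together give $h(W,V) = O(s^4)$. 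Hence $R^{\tilde{g}}(W,X,X,V) = R^{g_s}(W,X,X,V) + O(s^4)$, whose square is $s^4 (D_X\psi)^2 \langle W^{\mathcal{H}}/\psi, A_X V \rangle^2 + O(s^6)$. Dividing by $\mathrm{curv}(X,V) + O(s^2)$ produces precisely the new negative term in the claim, and adding the expression supplied by Proposition \ref{curv(X,W)} yields the rest.

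The main obstacle I anticipate is the careful bookkeeping of the $(1-s^2)$ factors from the $g_s$ metric on vertical vectors combining with the overall $s^2$ in Lemma \ref{Detlef}, and verifying that the assumption $\mathrm{Hess}^{g_s}(f)(W,V) = 0$ is precisely what is needed to rule out an $O(s^2)$ conformal correction to $R^{\tilde{g}}(W,X,X,V)$; without this hypothesis, squaring would produce an unwanted $O(s^4)$ contribution that would clobber the desired formula. A secondary check is that the overall $e^{\pm 2f} = 1 + O(s^2)$ factors introduced by the conformal change affect the $O(s^4)$ leading terms only at order $O(s^6)$, so they are harmlessly absorbed into the error.
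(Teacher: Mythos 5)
Your proposal is correct and follows essentially the same route as the paper: the first four terms come from Proposition \ref{curv(X,W)}, the key term $R^{g_s}\left( W,X,X,V\right) $ is obtained from the $A$--tensor exactly as in Lemma \ref{Abstract (1,3) tensors} (your derivation via Lemma \ref{Detlef}, skew-symmetry of $A_{X}$, and Lemma \ref{Abstract A--tensor} is precisely how that lemma is proved), and the hypotheses $g_{s}\left( W,V\right) =g_{s}\left( X,V\right) =0$ together with $\mathrm{Hess}^{g_{s}}\left( f\right) \left( W,V\right) =0$ are used, as in the paper, to see that the conformal change leaves this curvature component unchanged up to admissible error. The only differences are cosmetic --- your Kulkarni--Nomizu bookkeeping, the explicit tracking of the $\left( 1-s^{2}\right) $ factors, and the replacement of the denominator $\mathrm{curv}\left( X,V\right) $ --- all of which are absorbed into the $O\left( s^{6}\right) $ term.
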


\begin{remark}
In our application $\mathrm{Hess}^{g_{s}}\left( f\right) \left( W,V\right) $
will not be $0,$ but it will be too small to matter.
\end{remark}

\begin{proof}
According to Proposition \ref{curv(X,W)}, the first four terms are just $%
\mathrm{curv}\left( X,W\right) .$

Because $X$ and $W$ are initially tangent to a totally geodesic flat in a
nonnegatively curved manifold our initial curvature, $R^{\mathrm{old}},$
satisfies%
\begin{equation*}
R^{\mathrm{old}}\left( W,X\right) X=0.
\end{equation*}%
In particular, 
\begin{equation*}
R^{\mathrm{old}}\left( W,X,X,V\right) =0
\end{equation*}

Our hypotheses on $V$ combined with Lemma \ref{Abstract (1,3) tensors} give
us that after fiber scaling 
\begin{equation*}
R^{g_{s}}\left( W,X,X,V\right) ^{2}=s^{4}\left( D_{X}\psi \right)
^{2}\left\langle \frac{W^{\mathcal{H}}}{\left\vert W^{\mathcal{H}%
}\right\vert },A_{X}V\right\rangle ^{2}.
\end{equation*}%
It remains to verify that this formula continues to hold after our conformal
change. After the conformal change we have%
\begin{eqnarray*}
e^{-2f}\left\langle R^{\mathrm{new}}\left( W,X\right) X,V\right\rangle
&=&\left\langle R^{g_{s}}\left( W,X\right) X,V\right\rangle \\
&&-g_{s}\left( W,V\right) \mathrm{Hess}^{g_{s}}\left( f\right) \left(
X,X\right) -g_{s}\left( X,X\right) \mathrm{Hess}^{g_{s}}\left( f\right)
\left( W,V\right) \\
&&+g_{s}\left( X,V\right) \mathrm{Hess}^{g_{s}}\left( f\right) \left(
W,X\right) \\
&&+g_{s}\left( W,V\right) D_{X}fD_{X}f-g_{s}\left( X,X\right) g_{s}\left(
W,V\right) \left\vert \mathrm{grad}f\right\vert ^{2}
\end{eqnarray*}

Our hypotheses about $V$ immediately simplifies this to 
\begin{equation*}
e^{-2f}\left\langle R^{\mathrm{new}}\left( W,X\right) X,V\right\rangle
=\left\langle R^{g_{s}}\left( W,X\right) X,V\right\rangle
\end{equation*}
\end{proof}

In addition to the hypotheses of the previous Proposition we assume the
following.

\begin{itemize}
\item We have the set up for the orthogonal partial conformal change of
Section 3, with $X$ and $W$ tangent to one of the flats $S$ and $V$ tangent
to the distribution $\mathcal{O}$ of Section 3.

\item There is a $G_{1}\times G_{2}$ action on $M$ as in Lemma \ref{psi_nu,l}%
, and the action of $G_{1}$ coincides with that of $G$ from the fiber
scaling of section 4.
\end{itemize}

We now carry out metric deformations in the following order.

\begin{itemize}
\item Cheeger deform with $G_{1}=G$ and with the Cheeger parameter $\nu $
being small.

\item Perform the \emph{orthogonal partial conformal change }with $\varphi $
as in Section 3.

\item Scale the fibers of the Riemannian submersion $\pi :\left(
M,g_{0}\right) \longrightarrow B,$ as in Section 4, and

\item perform a conformal change with conformal factor $f$ as in Subsection
4.1.
\end{itemize}

As usual we call the initial metric $g$ and the final metric $\tilde{g}.$
The metric obtained by omitting the orthogonal partial conformal change will
be called $\bar{g}.$

\begin{remark}
We have chosen to explain the synergy only for a single torus. Because of
this our final deformation can be an actual conformal change rather than the
tangential conformal change described in Section 5. The abstract framework
of Section 5 will allow us to achieve the same results on all of the
initially flat tori of the Gromoll--Meyer sphere using a tangential
conformal change.
\end{remark}

\begin{lemma}
In addition to the hypotheses above assume that the ratio 
\begin{equation*}
\frac{g\left( \frac{W^{\mathcal{H}}}{\left\vert W^{\mathcal{H}}\right\vert }%
,A_{X}V\right) ^{2}}{\mathrm{curv}^{g}\left( X,V\right) }\leq C
\end{equation*}%
for all $\nu .$

There is a function $\varkappa :\left( 0,1\right) \longrightarrow \mathbb{R}%
_{+}$ with $\lim_{t\rightarrow 0}\varkappa \left( t\right) =0$ so that for
all $\tau \in \mathbb{R}$ 
\begin{equation}
Q\left( \tau \right) \equiv \mathrm{curv}^{\tilde{g}}\left( X,W+\tau
V\right) \geq \left( 1-\varkappa \left( \nu \right) \right) \mathrm{curv}^{%
\tilde{g}}\left( X,W\right) >0,  \label{kappa of nu}
\end{equation}%
provided that the $\varphi $ used in the orthogonal partial conformal change
is chosen appropriately.
\end{lemma}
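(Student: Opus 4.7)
The plan is to minimize the quadratic $Q(\tau)$ in $\tau$ and apply the formula from the preceding proposition, which computes the minimum as $\mathrm{curv}^{\tilde g}(X,W) - \Delta$ with
\[
\Delta \;\equiv\; s^4 (D_X\psi)^2 \,\frac{\bigl\langle W^{\mathcal{H}}/|W^{\mathcal{H}}|,\,A_X V\bigr\rangle^2}{\mathrm{curv}^{\tilde g}(X,V)} \,+\, O(s^6).
\]
The positive part $\mathrm{curv}^{\tilde g}(X,W)$ can be made pointwise of order $s^4$ by the argument at the end of Subsection 4.1, via a suitable choice of $I''$; and Lemma \ref{C^1 principle} applied with $W\notin\mathcal{O}$ guarantees that the orthogonal partial conformal change leaves this positivity intact. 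It therefore suffices to exhibit $\varkappa(\nu)\to 0$ with $\Delta \le \varkappa(\nu)\,\mathrm{curv}^{\tilde g}(X,W)$.

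First I would decompose the integral curve of $X$ into the compressed core $T_0 := [0,\nu]$, a buffer $(\nu,\nu^\beta]$ for some fixed $\beta < 7/9$, and the tail $[\nu^\beta,\pi/4]$. By Proposition \ref{Compression of psi'}, $(D_X\psi_{\nu,l})^2 \le O(\nu^{14/3-6\beta})$ throughout the tail. Choose the orthogonal partial conformal change to vanish off a small neighborhood of $T_0$; then on the tail Corollary \ref{Owen} shows that $\mathrm{curv}^{\tilde g}(X,V)$ and the numerator $\langle \cdot, A_X V\rangle^2$ differ from their $\bar g$-values by only $O(C^1)$. Combined with the bounded-ratio hypothesis, this forces $\Delta = O(s^4 \nu^{14/3-6\beta})$ there, which is dwarfed by $\mathrm{curv}^{\tilde g}(X,W) \sim s^4$. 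A monotone interpolation of the compression estimate handles the buffer.

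On the core $T_0$, where $(D_X\psi_{\nu,l})^2$ is uniformly of order $1$, I would instead enlarge the denominator. By Proposition \ref{redistr curv},
\[
\mathrm{curv}^{\tilde g}(X,V) \;=\; \mathrm{curv}^{\bar g}(X,V) \,-\, \varphi''\,|V|_g^2\,|X|_g^2 \,+\, O(C^1),
\]
so, writing $\varphi = f\circ r$, I would take $f$ strongly concave on $r^{-1}(T_0)$, with $\varphi''\equiv -M(\nu)$ there for a parameter $M(\nu)\to\infty$. Because $T_0$ has width $O(\nu)$, the $C^1$-norm of $\varphi - 1$ is $O(\nu M(\nu))$, so the choice $M(\nu)=\nu^{-1/2}$ keeps $\varphi$ inside the $C^1$-tolerance of Theorem \ref{C^1 close copy(1)}, inflates $\mathrm{curv}^{\tilde g}(X,V)$ on $T_0$ to order $M(\nu)$, and (again by Corollary \ref{Owen}, since the indices of $R(W,X,X,V)$ do not reduce to the $R(\tilde E_1,\tilde E_i,\tilde E_i,\tilde E_1)$ form) perturbs the numerator of $\Delta$ only by $O(C^1)$. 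Thus $\Delta = O(s^4/M(\nu))$ on $T_0$, and $\varkappa(\nu):=1/M(\nu)$ will work.

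The main obstacle, and the reason all three ingredients -- Cheeger compression, orthogonal partial conformal change, and fiber scaling -- must cooperate, is the quantitative balancing act: $|\varphi''|$ must be large enough on $T_0$ to overwhelm the unit-sized factor $(D_X\psi)^2$ in $\Delta$, yet the $C^1$-size $O(|T_0|\,|\varphi''|)$ of $\varphi - 1$ must remain small for Theorem \ref{C^1 close copy(1)} to preserve nonnegative curvature globally and for the $O(C^1)$ perturbations of Corollary \ref{Owen} to leave $\mathrm{curv}^{\tilde g}(X,W)$ and the numerator of $\Delta$ essentially unchanged. It is precisely the compression of the bulk of $(D_X\psi)^2$ into the narrow set $T_0$ of width $\nu$ that makes this simultaneous requirement satisfiable and determines the rate at which $\varkappa(\nu)$ decays.
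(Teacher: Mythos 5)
Your overall mechanism is the paper's: Proposition \ref{Compression of psi'} localizes $\left( D_{X}\psi _{\nu ,l}\right) ^{2}$ (hence the numerator of $\Delta$) into a set of width $O\left( \nu \right) ,$ and one makes $\varphi ^{\prime \prime }$ large and negative there so that, by Proposition \ref{redistr curv}, the denominator $\mathrm{curv}^{\tilde{g}}\left( X,V\right) $ is inflated exactly where it is needed, the narrowness of the core keeping $\varphi $ $C^{1}$--small. But your construction of $\varphi $ has a genuine flaw: you support the orthogonal partial conformal change in a small neighborhood of $T_{0}.$ Since $f$ is constant outside a compact interval, $\int f^{\prime \prime }=0,$ so the compensating positive part of $f^{\prime \prime }$ must then have size comparable to $M\left( \nu \right) \rightarrow \infty $ on a set of width $O\left( \nu \right) $ adjacent to $T_{0}.$ By Proposition \ref{redistr curv} this drives $\mathrm{curv}^{\tilde{g}}\left( X,V\right) $ strongly negative there -- precisely in the buffer $(\nu ,\nu ^{\beta }]$ where Proposition \ref{Compression of psi'} gives no smallness of $\left( D_{X}\psi _{\nu ,l}\right) ^{2}$ -- and it violates the hypothesis $\varepsilon \left\langle R^{g}\left( X,V\right) V,X\right\rangle -\varphi ^{\prime \prime }\left\vert V\right\vert _{g}^{2}\left\vert X\right\vert _{g}^{2}>0$ of Theorem \ref{C^1 close copy(1)}, so nonnegative curvature, and with it the final \textquotedblleft $>0$\textquotedblright , is no longer guaranteed. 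The paper's choice is the opposite: the positive part of $f^{\prime \prime }$ is spread thinly over $\left[ \varkappa \left( \nu \right) ,\pi /4\right] ,$ where compression makes $R^{\tilde{g}}\left( W,X,X,V\right) ^{2}$ negligible, so the slight decrease of $\mathrm{curv}\left( X,V\right) $ there is harmless. Your \textquotedblleft monotone interpolation\textquotedblright\ on the buffer also has no basis; no monotonicity of $\left( \psi _{\nu ,l}^{\prime }\right) ^{2}$ is established anywhere.

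A second gap is your tail comparison. You assert $\mathrm{curv}^{\tilde{g}}\left( X,W\right) \sim s^{4}$ pointwise, uniformly in $\nu ,$ with positivity protected by Lemma \ref{C^1 principle}. After Cheeger compression the three positive terms of Proposition \ref{curv(X,W)} are only $O\left( s^{4}\nu ^{\frac{14}{3}-6\beta }\right) $ on $\left[ \nu ^{\beta },\pi /4\right] ,$ and since $\int I^{\prime \prime }=0$ the boost available from $-s^{4}I^{\prime \prime }\left\vert W\right\vert ^{2}$ on the tail is limited by the surplus concentrated in $\left[ 0,\nu \right] ,$ i.e.\ is $O\left( s^{4}\nu \right) ;$ so on the tail $\mathrm{curv}^{\tilde{g}}\left( X,W\right) $ is of order $s^{4}\nu $ at best, your comparison then needs $\beta <11/18$ rather than merely $\beta <7/9,$ and the choice of $I^{\prime \prime }$ must be coupled to the rest of the argument. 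This is why the paper first establishes the integral inequality $\varkappa \left( \nu \right) \int \mathrm{curv}^{\tilde{g}}\left( X,W\right) >\int R^{\tilde{g}}\left( W,X,X,V\right) ^{2}/\mathrm{curv}^{\tilde{g}}\left( X,V\right) $ and only afterwards chooses $I^{\prime \prime }$ to pass to the pointwise statement. Finally, the non--interference of the orthogonal partial conformal change with $\mathrm{curv}\left( X,W\right) $ and $R\left( W,X,X,V\right) $ cannot rest on the $O\left( C^{1}\right) $--closeness of Lemma \ref{C^1 principle} and Corollary \ref{Owen}: an additive error of size $O\left( \nu ^{1/2}\right) $ is not dominated by quantities of order $s^{4}\nu .$ The paper instead uses that the change is orthogonal to the flats, so by Theorem \ref{C^1 close copy(1)} the zero planes are exactly preserved and both quantities are literally unchanged (they vanish at that stage), which is what lets the fiber--scaling formulas of Section 4 be applied verbatim afterwards.
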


\begin{remark}
The reader should note that this lemma not only shows that these curvatures
are positive, but also shows that we can make them as close as we like to $%
\mathrm{curv}^{\tilde{g}}\left( X,W\right) .$ This will be important for our
computations on the Gromoll--Meyer sphere.

Without the orthogonal partial conformal change we still get an estimate
that roughly looks like%
\begin{equation*}
Q\left( \tau \right) \equiv \mathrm{curv}^{\tilde{g}}\left( X,W+\tau
V\right) \geq \frac{1}{100}\mathrm{curv}^{\tilde{g}}\left( X,W\right) >0
\end{equation*}%
on the Gromoll--Meyer sphere. It turns out that this estimate is not good
enough, but one like inequality \ref{kappa of nu} is.
\end{remark}

\begin{remark}
By carefully considering the exponents in Proposition \ref{Compression of
psi'} one can also make more precise statements about the behavior of
allowable functions $\varkappa $ near $0.$ We will not need this, and so
have omitted it.
\end{remark}

\begin{proof}
For the moment assume that $\varphi \equiv 1,$ (i.e., the orthogonal partial
conformal change is not performed, and the resulting metric is called $\bar{g%
}.$)

Combining the previous lemma with our new hypothesis that there is a $C>0$
so that%
\begin{equation*}
\frac{g\left( \frac{W^{\mathcal{H}}}{\left\vert W^{\mathcal{H}}\right\vert }%
,A_{X}V\right) ^{2}}{\mathrm{curv}^{g}\left( X,V\right) }\leq C
\end{equation*}%
for all $\nu ,$ we conclude is that the minimum of $Q\left( \tau \right) $
satisfies%
\begin{eqnarray*}
&&\mathrm{curv}^{\bar{g}}\left( X,W\right) -\frac{R^{\bar{g}}\left(
W,X,X,V\right) ^{2}}{\mathrm{curv}^{\bar{g}}\left( X,V\right) } \\
&\geq &s^{4}\left( D_{X}\psi \right) ^{2}\left( 1-C\right) +s^{4}\frac{\psi
^{2}}{\left\vert W\right\vert ^{2}}\left( D_{X}\psi \right) ^{2}+s^{4}\frac{%
\psi ^{2}}{\left\vert W\right\vert ^{2}}D_{X}\left( \psi D_{X}\psi \right)
-s^{4}I^{\prime \prime }\left\vert W\right\vert ^{2}+O\left( s^{6}\right)
\end{eqnarray*}

It follows from Proposition \ref{integrals of psi tilde} that the sum of the
first three terms on the right hand side has a negative integral over an
integral curve $\gamma $ of $X$ that is parameterized as in Proposition \ref%
{integrals of psi tilde}. So the metric $\bar{g}$ can not satisfy our
conclusion. Depending on the precise value of $C$ we may even get that the
minimum of $Q\left( \tau \right) $ is negative somewhere along $\gamma $ for
all choices of $I^{\prime \prime }.$ In any event, our conclusion is false
without the orthogonal partial conformal change.

It follows from Theorem \ref{C^1 close copy(1)} that the orthogonal partial
conformal change does not affect $\mathrm{curv}\left( X,W\right) $ and $%
R\left( W,X,X,V\right) .$ Its effect on $\mathrm{curv}\left( X,V\right) $ is
given in Proposition \ref{redistr curv} and is 
\begin{equation}
\mathrm{curv}^{\tilde{g}}\left( X,V\right) =\mathrm{curv}^{g}\left(
X,V\right) -\varphi ^{\prime \prime }\left\vert V\right\vert
_{g}^{2}\left\vert X\right\vert _{g}^{2}+O\left( C^{1}\right) .
\label{redistr eqn}
\end{equation}%
where we use $\varphi ^{\prime \prime }$ for $D_{X}D_{X}\left( \varphi
\right) .$ The goal will now be to select $\varphi ^{\prime \prime }$
appropriately so as to adjust our estimate for 
\begin{equation*}
\left( D_{X}\psi \right) ^{2}\frac{\tilde{g}\left( \frac{W^{\mathcal{H}}}{%
\left\vert W^{\mathcal{H}}\right\vert },A_{X}V\right) ^{2}}{\mathrm{curv}^{%
\tilde{g}}\left( X,V\right) }
\end{equation*}

Recall that 
\begin{equation*}
\varphi \equiv f\circ r
\end{equation*}%
where $r$ is a smooth distance function with gradient $X$ and $f:\mathbb{R}%
\longrightarrow \mathbb{R}$, and is constant outside of a compact interval, $%
\left[ a,b\right] $. So in fact $\varphi ^{\prime \prime }=f^{\prime \prime
}\circ r.$ Since $f$ is constant outside of $\left[ a,b\right] $%
\begin{equation*}
\int_{\left[ a,b\right] }f^{\prime \prime }=0.
\end{equation*}%
Equation \ref{redistr eqn} therefore gives us a way to redistribute $\mathrm{%
curv}\left( X,V\right) $ along the integral curves of $X.$

Our curvature compression result, Proposition \ref{Compression of psi'}, and
our estimate for the minimum of $Q\left( \tau \right) $ together suggest an
appealing choice for $f^{\prime \prime }.$ Indeed Proposition \ref%
{Compression of psi'} says, for example, that $\left( D_{X}\psi \right)
^{2}\leq \nu ^{4.6}$ outside of an interval like $\left[ 0,\varkappa \left(
\nu \right) \right] $. We choose $f^{\prime \prime }$ to be negative (and
relatively large in absolute value) on an interval like $\left[ 0,O\left(
\nu \right) \right] $ and \textquotedblleft pay for this\textquotedblright\
by having $f^{\prime \prime }$ be positive (but relatively small) on $\left[
\varkappa \left( \nu \right) ,\frac{\pi }{4}\right] .$ With such a choice of 
$\varphi $ we can make the integral over any integral curve of $X$ satisfy 
\begin{equation*}
\varkappa \left( \nu \right) \int \mathrm{curv}^{\tilde{g}}\left( X,W\right)
>\int \frac{R^{\tilde{g}}\left( W,X,X,V\right) ^{2}}{\mathrm{curv}^{\tilde{g}%
}\left( X,V\right) }
\end{equation*}%
for the appropriately chosen function $\varkappa \left( \nu \right) $ with $%
\lim_{\nu \rightarrow 0}$ $\varkappa \left( \nu \right) =0.$ Indeed we have
made the denominator $\mathrm{curv}^{\tilde{g}}\left( X,V\right) $ larger on
the region $\left[ 0,O\left( \nu \right) \right] $ where $R^{\tilde{g}%
}\left( W,X,X,V\right) ^{2}$ is relatively large. We have done this at the
expense of making it very slightly smaller on $\left[ \varkappa \left( \nu
\right) ,\frac{\pi }{4}\right] ,$ but on this region $R^{\tilde{g}}\left(
W,X,X,V\right) ^{2}$ is very small. So our redistribution of $\mathrm{curv}%
\left( X,V\right) $ does in fact give us the desired inequality in an
integral sense.

We obtain the point wise inequality by combining the integral inequality
with a judicious choice of $I^{\prime \prime }.$ Namely that it be
sufficiently negative on the the complement of $\left[ 0,O\left( \nu \right) %
\right] .$
\end{proof}

\end{document}